\definecolor{fondo}{rgb}{0.898,0.996,0.898}
\title{Sparse Trace Tests}
\author{Taylor Brysiewicz}
\address{University of Notre Dame, Department of Applied and Computational Mathematics and Statistics, Notre Dame, IN 46556}
\email{tbrysiew@nd.edu}
\author{Michael Burr}
\thanks{Burr was partially supported by grants from the National Science Foundation (CCF-1527193 and DMS-1913119).}
\address{Clemson University, School of Mathematical and Statistical Sciences, 220 Parkway Drive, Clemson, SC 29634}
\email{burr2@clemson.edu}
\newcommand{\C}{{\mathbb{C}}}
\newcommand{\rectangle}{{%
  \ooalign{$\sqsubset\mkern3mu$\cr$\mkern3mu\sqsupset$\cr}%
}}
\newcommand{\R}{{\mathbb{R}}}
\newcommand{\Z}{{\mathbb{Z}}}
\newcommand{\V}{{\mathcal{V}}}
\newcommand{\cA}{{\mathcal{A}}}
\newcommand{\cC}{{\mathcal{C}}}
\newcommand{\cF}{{\mathcal{F}}}
\newcommand{\cL}{{\mathcal{L}}}
\newcommand{\cU}{{\mathcal{U}}}
\newcommand{\cN}{{\mathcal{N}}}
\newcommand{\cB}{{\mathcal{B}}}
\newcommand{\cG}{{\mathcal{G}}}
\newcommand{\mydef}[1]{{\it{\color{blue}#1}}}
\newcommand{\mycomment}[1]{}
\newcommand{\MV}{\text{MV}}
\newcommand{\conv}{\text{conv}}
\newcommand{\TAL}{TAL}
\theoremstyle{definition}
\newtheorem{theorem}{Theorem}
\newtheorem{definition}[theorem]{Definition}
\newtheorem{lemma}[theorem]{Lemma} 
\newtheorem{corollary}[theorem]{Corollary} 
\newtheorem{example}[theorem]{Example}
\newtheorem{remark}[theorem]{Remark}
\newtheorem{proposition}[theorem]{Proposition}
\DeclareMathOperator{\res}{Res}
\DeclareMathOperator{\elim}{Elim}
\DeclareMathOperator{\rank}{rk}
\DeclareMathOperator{\defect}{def}
\DeclareMathOperator{\supp}{supp}
\DeclareMathOperator{\offset}{offset}
\DeclareMathOperator{\id}{id}
\begin{document}
\begin{abstract}
We establish how the coefficients of a sparse polynomial system influence the sum (or the trace) of its zeros.  As an application, we develop numerical tests for verifying whether a set of solutions to a sparse system is complete.  These algorithms extend the classical trace test in numerical algebraic geometry.  Our results rely on both the analysis of the structure of sparse resultants as well as an extension of Esterov's results on monodromy groups of sparse systems.
\end{abstract}
\maketitle

\section{Introduction}

The coordinate-wise sum of a finite set of points $S \subseteq \mathbb{C}^n$ is called its {trace}. When $S$ is a subset of a general linear section of an irreducible variety, 
the behavior of its trace as the section moves determines whether $S$ comprises the whole section. The following lemma makes this precise.
\begin{lemma}[{\cite[Theorem 3.6]{TraceTest:2002}}]
\label{lem:tracetest}
Fix an irreducible variety $X \subseteq \mathbb{C}^n$ and a generic pencil of affine linear spaces $L_t$ of complementary dimension. The trace of $X \cap L_t$ moves affine linearly in $t$. Conversely, the trace of any nonempty proper subset of $X \cap L_t$ moves nonlinearly in $t$. 
\end{lemma}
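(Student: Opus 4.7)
I will verify the two assertions separately. For the first (affine linearity of the full trace), I reduce to a bound on the pole order of a rational function on $\P^1$ via a compactification; for the second (nonlinearity of proper subset traces), I use the transitivity of the monodromy action on the generic fiber, which itself follows from the irreducibility of the incidence variety.

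For the first part, I form the incidence variety $Z = \{(t, x) \in \C \times X : x \in L_t\}$ and take its closure $\overline{Z}$ inside $\P^1 \times \overline{X}$, where $\overline{X} \subset \P^n$ is the projective closure of $X$. Because $X$ is irreducible and the pencil is generic, $\overline{Z}$ is an irreducible curve and the projection $\overline{\pi} \colon \overline{Z} \to \P^1$ is finite of degree $d = \deg(X)$. Next I fix a coordinate functional $\lambda = x_i$, viewed as a section of $\mathcal{O}_{\P^n}(1)|_{\overline{X}}$; as such it has a simple pole along $\overline{X} \cap \{z_0 = 0\}$ and is regular elsewhere. Pulled back to $\overline{Z}$, the rational function $\lambda$ has poles only over $\infty \in \P^1$, since for $t \in \C$ the fibers of $\overline{\pi}$ sit inside $X \subset \C^n$, which is disjoint from the hyperplane at infinity. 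A Bertini-type transversality argument then shows that for a generic pencil, $\overline{Z}$ meets $\P^1 \times (\overline{X} \cap \{z_0 = 0\})$ transversally, so each pole of $\lambda$ on $\overline{Z}$ is simple. Consequently the trace $\sigma_i(t) = \sum_{p \in X \cap L_t} x_i(p)$ is a rational function on $\P^1$ that is regular on $\C$ and has pole order at most $1$ at $\infty$, hence an affine linear polynomial in $t$.

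For the second part, I argue by contradiction using monodromy. Since $X$ is irreducible and the pencil is generic, $Z$ is irreducible, so the monodromy action of $\pi_1(\C \setminus \Delta)$ on a generic fiber $X \cap L_{t_0}$ is transitive (with $\Delta$ the finite discriminant locus). Let $S$ be a nonempty proper subset of $X \cap L_{t_0}$, analytically continued along paths to $S_t \subseteq X \cap L_t$. If $\sigma_S(t) := \sum_{p \in S_t} p$ were affine linear, then $\sigma_S$ would be single-valued, so every monodromy element $\tau$ would satisfy $\sigma_{\tau(S)} \equiv \sigma_S$. For generic $t$, the $d$ points of $X \cap L_t$ are in sufficiently general position that distinct subsets give distinct sums (a Bertini-type genericity statement), which forces $\tau(S) = S$ for every $\tau$. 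Transitivity of the monodromy then leaves only $\emptyset$ and the full fiber as invariant subsets, contradicting the assumption that $S$ is nonempty and proper.

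I expect the principal obstacle to lie in the transversality claim underpinning the first part: one must verify that for a generic pencil the compactification $\overline{Z}$ meets the boundary divisor $\P^1 \times (\overline{X} \cap \{z_0 = 0\})$ transversally, so that $\lambda$ picks up only simple poles on $\overline{Z}$. Singularities of $\overline{Z}$ over $t = \infty$ or tangential intersections with the boundary could in principle raise the pole order and ruin the argument; a cleaner alternative is to work cohomologically via the trace map $\overline{\pi}_{*} \mathcal{O}_{\overline{X}}(1) \to \mathcal{O}_{\P^1}(1)$ coming from duality for finite morphisms, which carries a section of $\mathcal{O}(1)$ to a section of $\mathcal{O}(1)$ and bypasses the pole accounting entirely. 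The genericity required in the second part (distinct subsets of the fiber give distinct sums) is comparatively routine but should also be justified by an explicit transversality argument on the pencil.
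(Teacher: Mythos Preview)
The paper does not prove this lemma; it is cited from \cite{TraceTest:2002}, and the surrounding text only indicates the ideas: the forward direction rests on Newton's identity $\Sigma_1 = -c_{d-1}/c_d$ for a univariate polynomial (made precise in the sparse generalization via the hidden-variable resultant of Section~\ref{sec:CombinatorialApproximationsOfTheSupport}), while for the backward direction the paper invokes the fact that the monodromy group of the pencil is the \emph{full symmetric group}, so that some loop induces a transposition swapping a point $s\in S$ with a point $s'\notin S$; one then needs only that distinct fiber points have distinct first coordinates to conclude that the trace of $S$ changes along that loop (compare the argument preceding the proof of correctness of Algorithms~\ref{algo:sparsetracetest} and~\ref{algo:constantsparsetracetest}, and Lemma~\ref{lem:distinctfirstcoordinates}).

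Your route is genuinely different on both halves. For the forward direction, the compactification/pole-order argument and especially its cohomological variant via the trace map $\overline{\pi}_*\mathcal{O}_{\overline{X}}(1)\to\mathcal{O}_{\P^1}(1)$ are sound and bypass resultants entirely; the transversality at $t=\infty$ that you flag is exactly the delicate point, and the cohomological alternative really does dispose of it. For the backward direction you trade a weaker monodromy hypothesis (mere transitivity) for a stronger genericity claim: that distinct subsets of the fiber of equal cardinality have distinct coordinate sums. This is where the proposal is thinnest. The paper's route needs only that distinct points have distinct first coordinates, a single codimension-one condition that is easy to verify; your claim is a large finite family of linear conditions on the fiber configuration, and ``a Bertini-type genericity statement'' does not by itself establish that a generic pencil on a \emph{fixed} $X$ avoids all of them. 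Since the monodromy of a generic linear pencil on an irreducible variety is classically the full symmetric group anyway, you lose nothing by using it, and the transposition-plus-distinct-coordinates argument is both shorter and parallel to what the paper carries out in the sparse setting.
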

The ({classical}) {trace test} \cite{Multiprojective:2020,TraceTest:2018,TraceTest:2002} is a fundamental algorithm in numerical algebraic geometry \cite{NumericalAlgGeom:1996,IntroNumericalAlgGeom:2005} which is used to verify the affine linear behavior of the trace in Lemma \ref{lem:tracetest}. Our main result is a sparse analogue to Lemma \ref{lem:tracetest}. More precisely, we identify a subset of the coefficients of a sparse polynomial system such that the trace is an affine linear function of this collection of coefficients. Conversely, we show that under simple conditions on the support, the traces of  nonempty incomplete solution sets are nonlinear functions of this collection of coefficients. We use our results to produce what we call sparse trace tests.

\begin{figure}[htpb]
\begin{center}
\begin{tikzpicture}[scale=.35]
\draw[line width=1.5pt] plot[id=trace, raw gnuplot, smooth] function{
f(x,y) = -x**4-13*x**3*y+10*x**3+18*x**2*y**2-10*x**2*y-5*x**2+10*x*y**3+22*x*y**2-10*x*y-4*x-y**3-10*y**2-5*y+6;
set xrange [-8:8];
set yrange [-8:8];
set view 0,0;
set isosample 5000,5000;
set size square;
set cont base;
set cntrparam levels incre 0,0.1,0;
unset surface;
splot f(x,y)};
\draw[dashed] (-8,5.95530726256983) -- (8,-6.73743016759777);
\draw[](-1,-8) -- (7,8);
\draw[](-4,-8) -- (4,8);
\draw[](-7,-8) -- (1,8);
\filldraw[fill=white] (4.00309, 2.00619) circle (.15);
\filldraw[fill=white] (-.073208, -6.14642) circle (.15);
\filldraw[fill=white] (.929176, -4.14165) circle (.15);
\filldraw[fill=white] (3.17294, .345878) circle (.15);
\filldraw[fill=gray] (2.008, -1.984) circle (.15);
\filldraw[fill=white] (-.929974, -1.85995) circle (.15);
\filldraw[fill=white] (-.397977, -.795954) circle (.15);
\filldraw[fill=white] (.250765, .50153) circle (.15);
\filldraw[fill=white] (.517185, 1.03437) circle (.15);
\filldraw[fill=gray] (-.14, -.28) circle (.15);
\filldraw[fill=white] (.194917, 6.38983) circle (.15);
\filldraw[fill=white] (-2.40181, 1.19639) circle (.15);
\filldraw[fill=white] (-2.12884, 1.74233) circle (.15);
\filldraw[fill=white] (-4.81627, -3.63255) circle (.15);
\filldraw[fill=gray] (-2.288, 1.424) circle (.15);
\end{tikzpicture}
\end{center}
\caption{The centroids (gray) of the intersection points (white) of the quartic curve with three parallel lines.  These averages move in a line (dashed).  We use averages instead of sums in this figure to keep the image small. }
\end{figure}
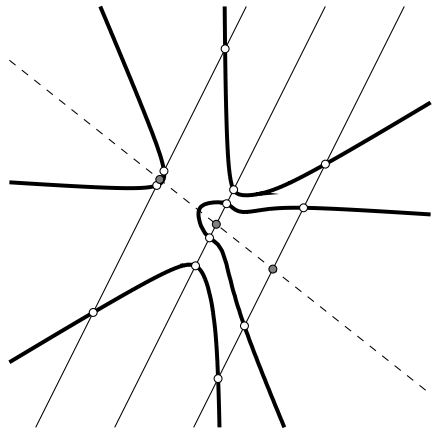

The proof of the forward direction of Lemma \ref{lem:tracetest} rests upon the following elementary result dating back to Newton. The trace of the zeros of $f=c_0+c_1x+c_2x^2+\cdots + c_{d-1}x^{d-1} + c_dx^d \in \C[x]$ equals $-c_{d-1}/c_{d}$, whenever $c_d \neq 0$ and zeros are counted with multiplicity. In particular, the trace of the zeros of a polynomial in one variable is an affine linear function of the coefficient $c_{d-1}$ and does not depend on the coefficients of lower-degree monomials, see Figure~\ref{fig:univariate}.

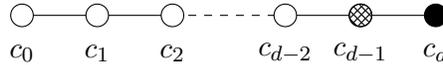
\begin{figure}[htb]
\begin{tikzpicture}
\draw (0,0) -- (1,0) -- (2,0); 
\draw[dashed] (2,0) -- (3.5,0);
\draw (3.5,0) -- (4.5,0) -- (5.5,0);

\filldraw[fill=white] (0,0) circle (.15);
\filldraw[fill=white] (1,0) circle (.15);
\filldraw[fill=white] (2,0) circle (.15);
\filldraw[fill=white] (3.5,0) circle (.15);
\filldraw[fill=white] (4.5,0) circle (.15);
\filldraw[pattern=crosshatch] (4.5,0) circle (.15);
\filldraw[] (5.5,0) circle (.15);

\node at (0,-0.5) {$c_0$};
\node at (1,-0.5) {$c_1$};
\node at (2,-0.5) {$c_2$};
\node at (3.5,-0.5) {$c_{d-2}$};
\node at (4.5,-0.5) {$c_{d-1}$};
\node at (5.5,-0.5) {$c_{d}$};

\end{tikzpicture}
\caption{The support of a univariate polynomial $f$. Identifying each point in the figure with the corresponding coefficient of $f$, the figure displays those which do not affect the trace (white), those which affect the trace affine linearly (cross-hatched), and those which influence the trace nonlinearly (filled).}\label{fig:univariate}
\end{figure}
We generalize Newton's result. Given a tuple $\mathcal A = (A_1,\ldots,A_n)$ of monomial supports $A_i \subseteq \mathbb{Z}^n$, we consider sparse polynomial systems $\mathcal F = (f_1,\ldots,f_n) \in (\mathbb{C}[x_1,\ldots,x_n])^n$ supported on $\mathcal A$. Any such polynomial system is identified with its coefficients in $\mathbb{C}^\mathcal A\vcentcolon=\mathbb{C}^{|\cA|}$.  The polyhedral geometry of $\cA$ controls many aspects of the solutions to $\mathcal F=0$ in the algebraic torus $(\mathbb{C}^\times)^n$ (see \cite{CLS}), and we study the trace of this solution set. As in the univariate case, the trace of these solutions is a rational function of the coefficients of $\cF$. 
Using a simple discrete geometric construction, we identify a large collection of monomials of $\cA$ for which the trace is an affine linear function of the corresponding collection of coefficients.
We illustrate an example in Figure \ref{fig:sparsesupport} where the similarity to Figure \ref{fig:univariate} is immediate.

\begin{figure}[htb]
\begin{tikzpicture}[scale=.7]
\filldraw[fill=gray, fill opacity=0.3] (0,0) -- (2,0) -- (3,1) -- (2,4) -- (1,4) -- (0,2) -- cycle;
\foreach \i in {0,...,4} {\foreach \j in {0,...,5} {\filldraw[color=gray] (\i,\j) circle (.03);}}
\filldraw[fill=white] (0,0) circle (.15);
\filldraw[fill=white] (1,0) circle(.15);
\filldraw[fill=white] (1,2) circle(.15);
\filldraw[pattern=crosshatch] (1,0) circle(.15);
\filldraw (2,0) circle(.15);
\filldraw[fill=white] (1,1) circle (.15);
\filldraw (3,1) circle (.15);
\filldraw (0,2) circle (.15);
\filldraw[fill=white] (1,2) circle (.15);
\filldraw[fill=white] (2,2) circle (.15);
\filldraw[pattern=crosshatch] (2,2) circle (.15);
\filldraw[fill=white] (1,3) circle (.15);
\filldraw[pattern=crosshatch] (1,3) circle (.15);
\filldraw (2,3) circle (.15);
\filldraw[color=white] (1,4) circle (.05);
\filldraw[] (1,4) circle (.15);
\filldraw (2,4) circle (.15);
\node at (2,-1) {Support of $f_1$};

\begin{scope}[shift={(9,0)}]

\foreach \i in {0,...,3} {\foreach \j in {0,...,4} {\filldraw[color=gray] (\i,\j) circle (.03);}}
\filldraw[fill=gray, fill opacity=0.3] (0,0) -- (2,0) -- (2,3) -- (0,3) -- cycle;
\filldraw[fill=white] (0,2) circle (.15);
\foreach \i in {0,...,1} {\foreach \j in {0,...,3} {\filldraw[fill=white] (\i,\j)  circle (.15);}}
\foreach \i in {1,...,1} {\foreach \j in {0,...,3} {\filldraw[pattern=crosshatch] (\i,\j)  circle (.15);}}
\foreach \i in {2,...,2} {\foreach \j in {0,...,3} {\filldraw (\i,\j)  circle (.15);}}
\filldraw[] (1,3) circle (.15);
\filldraw[] (0,3) circle (.15);
\filldraw[pattern=crosshatch] (0,2) circle (.15);
\node at (1.5,-1) {Support of $f_2$};
\end{scope}
\end{tikzpicture}
\caption{The support of $\cF=(f_1,f_2)$.  Identifying each point in the figure with the corresponding coefficient in $\cF$, our results determine a collection of those which do not affect the trace (white), a collection of those which may affect the trace affine linearly (cross-hatched), and a collection of those which may influence the trace nonlinearly (filled). 
}\label{fig:sparsesupport}
\end{figure}
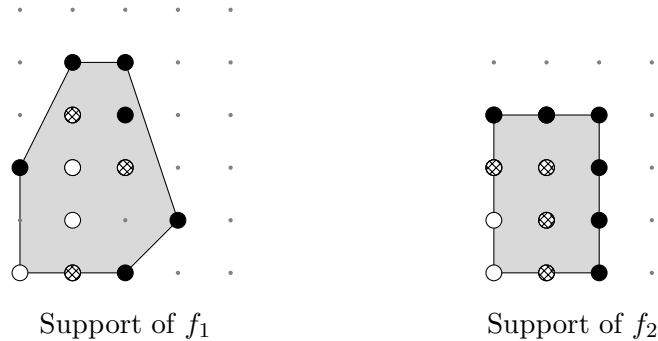

The backward direction of Lemma \ref{lem:tracetest} is more sophisticated. The proof in \cite{TraceTest:2018} relies on the fact that the monodromy group of the branched cover $\pi:\{(x,t) \in \C^n_x \times \C_t \,\, |\,\, x \in X \cap L_t\} \rightarrow \C_t$ is the full symmetric group. In the sparse setting, the relevant branched cover is the map
$$\pi_{\mathcal A}: \{(x,\mathcal F) \in (\mathbb{C}^\times)^n \times \mathbb{C}^{\mathcal A}\,\, |\,\, \mathcal F(x)=0\} \to \mathbb{C}^{\mathcal A},$$
restricted to those sparse systems where some subset of the coefficients are generic and fixed, and the rest vary.  Given a set $\cA$ of supports, we provide a simple condition on the  varying coefficients which guarantees that the monodromy group of this restriction is the full symmetric group.

The classical and sparse trace tests are instances of what we call {completeness tests}, which may be used to decide if a nonempty subset of a zero-dimensional variety is proper.  When implemented using numerical computations, such tests return the correct answer almost surely, and we call them numerical completeness tests.   For the classical trace test, the zero-dimensional polynomial system is the pair consisting of the equations for an irreducible variety and a generic linear system of complementary dimension. We expect  completeness tests to be developed beyond the sparse case; for example, to generalizations of witness sets in numerical algebraic geometry \cite{Multiprojective:2020, Sottile2020}.

Our results, although motivated by applications in numerical algebraic geometry, have consequences in symbolic algebra. For example, our results show how to reduce a polynomial system while preserving its trace. This reduction leads to smaller systems which improves the efficiency of both numerical and symbolic computations.

\subsection{Overview}

Since our main results can be stated, understood, and applied without proof, we present them in Section \ref{sec:BackgroundAndMainResults} along with the relevant ideas and notation. There, we present two algorithms, which we call sparse trace tests. The detailed background and proofs reside in Sections \ref{sec:CombinatorialApproximationsOfTheSupport} and \ref{sec:Monodromy}. In particular, we use sparse resultants in Section \ref{sec:CombinatorialApproximationsOfTheSupport} to determine coefficients of a sparse system which do not appear in a formula for the trace. In Section \ref{sec:Monodromy} we investigate the monodromy groups of restrictions of sparse polynomial systems to establish when traces of incomplete solution sets move nonlinearly. Section \ref{sec:Monodromy} also addresses exceptional cases for which the sparse trace tests cannot be applied. In Section \ref{sec:Examples}, we present a gallery of examples showcasing  applications of our theory and algorithms.

\section{Notation, background, and main results}
\label{sec:BackgroundAndMainResults}
We introduce the required background and notation pertaining to sparse polynomial systems and state our main results. We present two sparse trace tests (Algorithms \ref{algo:sparsetracetest} and  \ref{algo:constantsparsetracetest}), and outline a proof of correctness for these algorithms, relying upon results from Sections \ref{sec:CombinatorialApproximationsOfTheSupport} and \ref{sec:Monodromy}.

\subsection{Sparse polynomial systems}
Let $\mydef{e_j}$ denote the $j$-th standard coordinate vector of the integer lattice $\Z^n$ and $\mydef{{\bf 0}}$ the origin of $\Z^n$. 
For a finite subset $\mydef{A}\subseteq \Z^n$, we consider (Laurent) polynomials of the form $$\mydef{f(x)}=f(x_1,\dots,x_n) = \sum_{\alpha \in A} \mydef{c_{\alpha}}x_1^{\alpha_1}\cdots x_n^{\alpha_n} = \sum_{\alpha \in  A} c_\alpha \mydef{x^\alpha} \in \mydef{\mathbb{C}[x^{\pm \textrm{1}}]}\vcentcolon=\mathbb{C}[x_1,\ldots,x_n,x_1^{-1},\ldots,x_n^{-1}],$$
where $c_\alpha\in\C$.   We say that  $f$ is \mydef{supported on} $A$ and its \mydef{support} is $\mydef{\supp(f)}=\{\alpha \in A \mid c_{\alpha} \neq 0\}$.  Identifying $f$ with its coefficients $\{c_{\alpha}\}_{\alpha \in A}$, we write $f \in \mydef{ \mathbb{C}^{A}}\vcentcolon=\mathbb{C}^{|A|}$. We define $\mydef{L[A]}$ to be the lattice generated by all differences of points in $A$. Given any sublattice $L$ of $\Z^n$, its rank is $\mydef{\rank{(L)}}$, and we define $\mydef{\rank(A)} \vcentcolon = \rank(L[A])$. When  $L$ is full rank, we write $\mydef{[\Z^n:L]}$ for the index of $L$ in $\Z^n$.

For a fixed collection $\mydef{\mathcal A} = (A_1,\ldots,A_N)$, a tuple $\mydef{\cF} = (f_1,\dots,f_N)$ of Laurent polynomials in $\C[x^{\pm 1}]$ is called a \mydef{sparse polynomial system supported on} $\cA$ whenever each $f_i$ is supported on $A_i$. In this case, we write $f_i = \sum_{\alpha \in A_i} c_{i,\alpha} x^\alpha$. As with a single polynomial, we identify $\cF$ with its coefficients in $\mydef{\mathbb{C}^{\cA}}\vcentcolon=\mathbb{C}^{A_1} \times \cdots \times \mathbb{C}^{A_N} $. We apply set operations to collections of supports coordinate-wise. For example, given $\cB=(B_1,\ldots,B_N)$ and $\cC=(C_1,\ldots,C_N)$, we write $\mydef{\cB \cup \cC}$ for $(B_1 \cup C_1,\ldots,B_N \cup C_N)$. When $\cA = \cB \sqcup \cC$, we may decompose $\cF = \mydef{\cF_{\cB} + \cF_{\cC}} \in \mydef{\C^{\cB} \times \C^{\cC}}=\C^{\cA}$ in order to refer to the coefficients of $\cF$ indexed by $\cB$ and $\cC$ individually. We define $\mydef{L[\cA]}$ to be the lattice generated by $\bigcup_{i=1}^n L[A_i]$. When each $A_i$ has full rank, we say that $\cA$ is \mydef{abundant}.

The set of zeros in $\C^n$ of a polynomial system is denoted by $\mydef{\V(\cF)}$. In the sparse setting, we consider zeros of $\cF \in \mathbb{C}^{\cA}$ in the \mydef{algebraic torus} $\mydef{(\C^\times)^n} \vcentcolon = \{x \in \C^n \mid x_i \neq 0 \text{ for } i=1,\ldots,n\}$, and so we decorate the notation $\V$ as 
$\mydef{\V^\times(\cF)} \vcentcolon= \{x \in (\C^\times)^n \mid f_i(x) = 0 \text{ for } i=1,\ldots,n\}$.

To refer to the entire family of sparse polynomial systems supported on $\cA$, we replace the coefficients $c_{i,\alpha}$ with parameters $\mydef{u_{i,\alpha}}$ and write $\mydef{\cF(u)} \in \C[u_{i,\alpha}][x^{\pm 1}]$. This system is called the \mydef{universal polynomial system over $\cA$}.
The following \mydef{incidence variety} encodes the structure of the family of polynomial systems $\mathbb{C}^{\cA}$ and their zeros: 
$$\mydef{X_{\cA}}\vcentcolon= \{(x,\cF) \mid x \in \V^\times(\cF), \cF \in \C^{\cA}\} \subseteq (\C^\times)^n \times \C^{\cA}.$$
In other words, $X_{\cA}$ is the zero set of $\cF(u)$ in $(\C^\times)^n \times \C^{\cA}$.
Any particular solution set $\V^\times(\cF)$ is naturally identified with the fibre over $\cF$ with respect to the projection $\mydef{\pi_{\cA}}$ on the second factor. The fibre over a point $x\in(\C^\times)^n$ with respect to the projection $\mydef{\pi_{\V}}$ on the first factor is identified with all sparse systems having $x$ as a solution.

We say that $\cA$ is a \mydef{square} set of supports when $N=n$. Similarly, systems supported on square sets of supports are called \mydef{square systems}. When $\cA$ is square, its mixed volume $\mydef{\MV(\cA)}$ is defined to be the mixed volume of the convex hulls $\{\mydef{\conv(A_i)}\}_{i=1}^n$. 
Unless otherwise stated, $\cA$  refers to a \emph{square set of supports with} $\MV(\cA)>0$. We repeat these assumptions in all results for completeness.

The discrete geometry of $\cA$ controls much of the complex geometry of $X_\cA$. In particular, for square systems, the following theorem relates the mixed volume of $\cA$ to the cardinality of a generic fibre of $\pi_{\cA}$.
\begin{theorem}[BKK \cite{Bernstein,Kushnirenko}]\label{thm:BKK}
Let $\cA$ be a square set of supports with $\MV(\cA)>0$.  The cardinality of $\V^{\times}(\cF)$ is $\MV(\cA)$ for all $\cF$ outside of a Zariski closed subset of $\mathbb{C}^{\cA}$. 
\end{theorem}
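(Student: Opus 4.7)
The plan is to prove BKK via a toric compactification and intersection theory. First, I would choose a smooth projective toric variety $X_\Sigma$ whose fan $\Sigma$ is a common refinement of the inner normal fans of the Newton polytopes $P_i \vcentcolon= \conv(A_i)$ for $i = 1,\ldots,n$. Each Laurent polynomial $f_i$ then extends from $(\C^\times)^n$ to a global section $s_i$ of a globally generated line bundle $L_i$ on $X_\Sigma$ whose associated polytope is a translate of $P_i$, and the coefficient space $\C^\cA$ maps naturally into $\prod_i H^0(X_\Sigma,L_i)$.

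Next, I would apply Bertini's theorem to the base-point-free linear systems $|L_i|$: for $\cF$ in a Zariski-open set $U_1 \subseteq \C^\cA$, the zero scheme $Z_\cF \vcentcolon= V(s_1,\ldots,s_n) \subseteq X_\Sigma$ is a reduced, zero-dimensional complete intersection. Its cardinality then equals the top intersection number $L_1 \cdots L_n$ on $X_\Sigma$, which by the combinatorial description of intersection numbers on toric varieties (see, e.g., \cite{CLS}) coincides with $\MV(P_1,\ldots,P_n) = \MV(\cA)$.

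The main obstacle is showing that $Z_\cF \subseteq (\C^\times)^n$ for generic $\cF$, i.e.\ that no solutions escape to the toric boundary of $X_\Sigma$. For each torus-invariant prime divisor $D_\rho$ corresponding to a ray $\rho$ of $\Sigma$, the restriction $s_i|_{D_\rho}$ is a section whose associated polytope is the face $(P_i)_\rho$ of $P_i$ in the direction $\rho$. A dimension count on the $(n-1)$-dimensional toric variety $D_\rho$, together with the hypothesis $\MV(\cA) > 0$ (which prevents these face polytopes from being simultaneously too degenerate along any ray), shows that for $\cF$ in a Zariski-open $U_2 \subseteq \C^\cA$ the restricted sections have no common zeros on $D_\rho$. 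Intersecting the resulting open conditions over the finitely many rays of $\Sigma$ gives $U_2$ on which $Z_\cF$ is entirely contained in the open torus.

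On $U \vcentcolon= U_1 \cap U_2$, which is Zariski-open in $\C^\cA$, we therefore have $\V^\times(\cF) = Z_\cF$ and $|\V^\times(\cF)| = \MV(\cA)$, which is the claim. The bulk of the work lies in the boundary-avoidance step, which is where the combinatorial hypothesis $\MV(\cA) > 0$ enters essentially; without it, one typically obtains only the inequality $|\V^\times(\cF)| \leq \MV(\cA)$ from the compactification argument, and positive-dimensional boundary components can absorb the missing solutions.
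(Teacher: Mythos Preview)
The paper does not prove Theorem~\ref{thm:BKK}; it is quoted as a classical result with citations to Bernstein and Kushnirenko, and is used throughout as a black box. So there is no ``paper's proof'' to compare against.

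That said, your toric-compactification outline is one of the standard modern routes to BKK (as in Fulton or \cite{CLS}) and is essentially sound. Two small remarks. First, the hypothesis $\MV(\cA)>0$ is not what drives the boundary-avoidance step: for any cone $\sigma$ of $\Sigma$ with $\dim\sigma\geq 1$, the fibre of the incidence correspondence $\{(\cF,p):p\in O_\sigma,\ s_i(p)=0\ \forall i\}$ over a point $p$ has codimension exactly $n$ in $\C^{\cA}$ (the $n$ linear conditions involve disjoint blocks of coefficients, and each block contains a monomial lying on the relevant face because vertices of $\conv(A_i)$ belong to $A_i$). The resulting dimension count $\dim O_\sigma + (|\cA|-n) = |\cA|-\dim\sigma < |\cA|$ already shows the projection to $\C^{\cA}$ is not dominant, with no appeal to positivity of the mixed volume. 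The hypothesis $\MV(\cA)>0$ in the paper is there only so that $\pi_{\cA}$ is a branched cover of positive degree, which is how the theorem is used downstream. Second, your phrasing ``no common zeros on $D_\rho$'' is fine provided you mean the closed divisor; the dimension count above over all cones $\sigma$ (not just rays) handles the deeper strata in one stroke and avoids an inductive argument on $D_\rho$.
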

Geometrically, Theorem \ref{thm:BKK} states that when $\MV(\cA)>0$, the map $\pi_{\cA}$ is a \mydef{branched cover} of degree $\MV(\cA)$. The Zariski closed subset of $\C^{\cA}$ for which the fibres do not have cardinality $\MV(\cA)$ is called the \mydef{branch locus} of $\pi_{\cA}$. 
We call a system $\cF\in\C^\cA$ \mydef{Bernstein-generic} whenever it is not in this branch locus. Bernstein-generic systems have exactly $\MV(\cA)$-many isolated zeros, each of multiplicity one, in the algebraic torus. More generally, systems which are not Bernstein-generic have at most $\MV(\cA)$-many isolated zeros in the algebraic torus, counted with multiplicity.

Restricted to the complement of the branch locus, the map $\pi_{\cA}$ is topologically a $\MV(\cA)$-to-one covering space. Consequently, a loop $\gamma:[0,1] \to \C^{\cA}$ based at $\cF \in \C^{\cA}$ which avoids the branch locus may be uniquely lifted to $\MV(\cA)$-many paths in $X_{\cA}$. These paths connect points in $\pi_{\cA}^{-1}(\cF)$, inducing a permutation on the fibre. The collection of all such permutations form a subgroup $\mydef{G(\pi_{\cA})}$ of the symmetric group on $\pi_{\cA}^{-1}(\cF)$.  The group ${G(\pi_{\cA})}$ encodes global information about the symmetries of the branched cover and is called the \mydef{monodromy group}  of $\pi_{\cA}$. It does not depend on the base point $\cF$ and is well-defined up to relabeling the points of $\pi_{\cA}^{-1}(\cF)$. 

\subsection{Traces}
Given a subset $S \subseteq \C^n$, its coordinate-wise sum $\mydef{\Sigma(S)} \vcentcolon= (\Sigma_1(S),\ldots,\Sigma_n(S))$ is called its \mydef{trace}. 
In the numerical algebraic geometry literature (see, for example, \cite{TraceTest:2002}), the coordinate-wise \emph{average} $\mydef{\mu(S)}\vcentcolon=\mydef{(\mu_1(S),\dots,\mu_n(S))} = \frac{\Sigma(S)}{|S|}$ of $S$ has been traditionally called its trace. To avoid ambiguity, we call $\mu(S)$ the \mydef{centroid} of $S$.

Without loss of generality, \emph{we focus on the first coordinate} $\Sigma_1(\V^\times(\cF))$. Analogous statements about the other coordinates of $\Sigma(\V^\times(\cF))$ may be gleaned from our results by the interested reader.

The authors of \cite{DAndreaJeronimo:2008} show that $\Sigma_1(\V^\times(\cF(u)))$ is a rational function of the coefficients of $\cF(u)$, expressed in terms of sparse resultants (see Section \ref{sec:CombinatorialApproximationsOfTheSupport} for additional details). We assume that this fraction has been reduced so that the numerator and denominator do not share any nontrivial factors. This formula involves some, but not all, of the coefficients of $\cF(u)$. We distinguish monomials in $\cA$ depending on how they appear in the formula for $\Sigma_1(\V^\times(\cF(u)))$.

\begin{definition}
\label{def:partitionmonomials}
Given a square set of supports $\cA$ with $\MV(\cA)>0$, we define the following:
\begin{itemize}
\item The \mydef{unnecessary support} $\mydef{\cU} \subseteq \cA$ consists of those monomials whose coefficients do not appear in the formula for $\Sigma_1(\V^\times(\cF(u)))$.  The complement $\mydef{\mathcal N} =\cA \backslash \cU$ of $\cU$  is the \mydef{necessary support}.
\item The \mydef{affine linear support} $\mydef{\mathcal {AL}} \subseteq \cN \subseteq \cA$ consists of those monomials in the necessary support whose coefficients only appear in the numerator, and do so to degree one.
\item The \mydef{nonlinear support} $\mydef{\cN\cL} \subseteq \cN \subseteq \cA$, consists of all other necessary monomials, that is, $\cN\cL = \cN \backslash \cA\cL$.
\end{itemize}
\end{definition}
These sets partition $\cA$ as $\cA=\cU \sqcup \cN=\cU \sqcup (\cA\cL \sqcup \cN\cL)$.  Given two sparse systems $\cF,\cG \in \C^{\cA}$ and a subset $\cC \subseteq \cA$, we write $\cF \mspace{\thickmuskip}\mydef{\approx_{\cC}} \mspace{\thickmuskip}\cG$ when $\cF_{\cC} = \cG_{\cC}$, that is, the coefficients of $\cF$ and $\cG$ agree over the support $\cC$.

\begin{definition}
\label{def:AffineLinearGroup}
A subset $\cB \subseteq \cA$ is called \mydef{trace-affine-linear (\TAL)} if the trace $\Sigma_1(\V^\times(\cF))$ is an affine linear function of the coefficients indexed by $\cB$.
\end{definition}

\begin{remark} Any singleton consisting of an element of $\mathcal U \cup \cA\cL$ is \TAL. Moreover, by definition, any \TAL\ subset must be contained in $\mathcal U \cup \cA\cL$. However, the converse is not true: if two coefficients $c_{\alpha},c_{\beta}$ appear to degree one in the same monomial in the numerator of the formula for $\Sigma_1(\V^\times(\cF(u)))$, then the set $\{\alpha,\beta\}$ is not \TAL\ despite being contained in $\cU \cup \cA\cL$.
\end{remark}

The following lemma provides the analogue of the forward direction of Lemma \ref{lem:tracetest}. It is a direct consequence of Definitions \ref{def:partitionmonomials} and  \ref{def:AffineLinearGroup}.
\begin{lemma}\label{lem:preciseForwardSparse}
Suppose $\cA$ is a square set of supports with $\MV(\cA)>0$ and $\cB \subseteq \cA$ is a \TAL\ subset of $\cA$.  Fix Bernstein-generic $\cF,\cG \in \C^{\cA}$ such that $\cF \approx_{\cA\backslash \cB} \cG$ (respectively, $\cF \approx_{\cN} \cG$). Then the trace $\Sigma_1$ of $\V^\times(t\cF+(1-t)\cG)$ is an affine linear (respectively, constant) function of $t$, restricted to the set of $t$'s where $t\cF+(1-t)\cG$ is Bernstein-generic.
\end{lemma}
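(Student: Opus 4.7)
The plan is to obtain this by simply unwinding the definitions, using the D'Andrea–Jeronimo rational expression for $\Sigma_1(\V^\times(\cF(u)))$ as the single input. First I would set $\cH(t) \vcentcolon= t\cF+(1-t)\cG$ and note that each coefficient of $\cH(t)$ is an affine linear function of $t$. The hypothesis $\cF \approx_{\cA\setminus\cB}\cG$ says the coefficients indexed by $\cA\setminus\cB$ are constant in $t$, while those indexed by $\cB$ vary affinely; similarly, $\cF \approx_{\cN}\cG$ forces only the coefficients indexed by $\cU=\cA\setminus\cN$ to move.

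For the affine linear conclusion, I would appeal to the definition of \TAL: since $\cB$ is trace-affine-linear, the restriction of the rational formula for $\Sigma_1$ to the affine subspace $\{\cH\in\C^\cA : \cH_{\cA\setminus\cB}=\cF_{\cA\setminus\cB}\}$ is, by definition, an affine linear function of the coordinates indexed by $\cB$. Composing this affine linear function on $\C^\cB$ with the affine linear map $t\mapsto \cH(t)_\cB$ yields an affine linear function of $t$, which equals $\Sigma_1(\V^\times(\cH(t)))$ wherever $\cH(t)$ is Bernstein-generic. For the constant conclusion, the coefficients varying along the pencil all lie in $\cU$, and by Definition \ref{def:partitionmonomials} coefficients indexed by $\cU$ do not appear in the formula for $\Sigma_1$ at all; hence $\Sigma_1(\V^\times(\cH(t)))$ is literally independent of $t$ on the Bernstein-generic locus.

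The only mild subtlety is passing from an identity of rational functions in the universal coefficients to an identity of numerical values along the specific line $\{\cH(t):t\in\C\}$. I would handle this by observing that being Bernstein-generic is a Zariski-open condition on $\C^\cA$; since the endpoints $\cF$ and $\cG$ are Bernstein-generic, the set of $t$ for which $\cH(t)$ fails to be Bernstein-generic is a proper Zariski-closed (hence finite) subset of $\C$, and on its complement the formal rational identity holds as an identity of functions of $t$.

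There is essentially no obstacle here beyond bookkeeping: the real content of the result has been pushed into the D'Andrea–Jeronimo formula and into Definitions \ref{def:partitionmonomials} and \ref{def:AffineLinearGroup}, which were crafted precisely so that this lemma is a direct translation. The nontrivial companion direction — showing that traces of \emph{incomplete} solution sets genuinely move nonlinearly — is the one that will require the monodromy arguments of Section \ref{sec:Monodromy}.
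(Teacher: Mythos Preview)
Your proposal is correct and matches the paper's approach: the paper states that the lemma ``is a direct consequence of Definitions \ref{def:partitionmonomials} and \ref{def:AffineLinearGroup}'' and gives no further argument, and your write-up is precisely the unwinding of those definitions along the pencil $t\mapsto \cH(t)$.
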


\begin{example}\label{ex:sparsetrace}
Let 
\begin{equation}\label{eq:ex:F}
\cF=\begin{Bmatrix}
f_1:\hspace{0.55 in} 3x^{2}y^{4}+2x^{2}y^{3}-xy^{4}+x^{3}y+5x^{2}y^{2}+xy^{3}+2x^{2}+4y^{2}+9x\\
f_2: 4x^{2}y^{3}+x^{2}y^{2}+8xy^{3}-4x^{2}y-2xy^{2}+3y^{3}+4x^{2}+5xy+4x-y-9
\end{Bmatrix},\hspace{0.53 in}
\end{equation}
and 
\begin{equation}\label{eq:ex:G}
\cG=\begin{Bmatrix}
g_1: 3x^{2}y^{4}+2x^{2}y^{3}-3xy^{4}+x^{3}y+3x^{2}y^{2}-2xy^{3}-3xy^{2}+2x^{2}+xy+y^{2}+4x-5\\
g_2:\hspace{0.15in} 4x^{2}y^{3}+x^{2}y^{2}+3xy^{3}-4x^{2}y-4xy^{2}+5y^{3}+4x^{2}+3xy-4y^{2}-x-2y-5
\end{Bmatrix}.
\end{equation}
These systems are supported on the monomials depicted in Figure \ref{fig:sparsesupport}.

The polynomials $f_1$ and $g_1$ agree on the coefficients of $\{x^2y^4, x^2y^3, x^3y,x^2\}$, as do $f_2$ and $g_2$ with respect to the coefficients of $\{x^2y^3,x^2y^2,x^2y,x^2\}$.  We show in Example \ref{eq:ex:FContinued} that the complement of this collection of coefficients is \TAL\ with respect to $x$.  In Table \ref{table:sparseaffine}, we calculate $\Sigma_1$ and $\Sigma_2$ for $t\cF+(1-t)\cG$ for several values of $t$.  Through direct inspection, we see that $\Sigma_1$ appears to be an affine linear function of $t$, whereas $\Sigma_2$ is not.  Therefore, by Lemma \ref{lem:preciseForwardSparse}, we conclude that the complement of this set of coefficients is not \TAL\ with respect to $y$.
\end{example}

\begin{table}[hbt]
\begin{tabular}{c|cccccc}
$t$&0&1&2&3&4&5\\\hline
$\Sigma_1$&3.922&-0.578&-5.078&-9.578&-14.078&-18.578\\
$\Sigma_2$&-0.200&-0.523&-8.135&5.772&1.974&1.236 
\end{tabular}
\medskip
\caption{The traces for $\V^\times(t\cF+(1-t)\cG)$ where $\cF$ and $\cG$ are Systems (\ref{eq:ex:F}) and (\ref{eq:ex:G}) in Example~\ref{ex:sparsetrace}.  
}\label{table:sparseaffine}
\end{table}

\subsection{Sparse trace tests}
We first recall the classical trace test \cite{TraceTest:2018} since it serves as a model for the sparse trace test.  The input to the algorithm is an irreducible variety $X$, a general family of parallel linear spaces $L_t$ of complementary dimension, and a subset $S\subseteq X\cap L_0$.  The algorithm uses \mydef{homotopy continuation} to construct an analytic continuation $S_t\subseteq X\cap L_t$ of $S=S_0$ as $t$ varies.  This process tracks the points of $S$ to points of $X\cap L_t$ for generic $t\in\C$.  The values of $\Sigma(S_0)$, $\Sigma(S_{1/2})$, and $\Sigma(S_1)$ are compared, and, after appealing to Lemma \ref{lem:tracetest}, these three traces are collinear if and only if $S=X\cap L_0$.

A \mydef{completeness test} is an algorithm whose input is a zero-dimensional polynomial system $\cF$ and a nonempty subset $S$ of its zeros.  It returns the output \texttt{pass} if $S$ is \mydef{complete}, that is, if $S=\V(\cF)$. It returns \texttt{fail} if $S\neq \V(\cF)$. The main application of the classical trace test is to decide whether a nonempty subset $S$ of a general linear section $X\cap L$ of an irreducible variety is complete.   In this sense, the classical trace test is a completeness test for irreducible varieties.

Following the approach of the classical trace test, we introduce a \mydef{sparse trace test} in Algorithm~\ref{algo:sparsetracetest}.  Using the stronger conditions of Lemma \ref{lem:preciseForwardSparse}, we describe a second sparse trace test, Algorithm~\ref{algo:constantsparsetracetest},  which we call the \mydef{constant sparse trace test}. We refer to these algorithms as the \mydef{sparse trace tests}.

\begin{algorithm}[tb]
\SetKwIF{If}{ElseIf}{Else}{if}{then}{elif}{else}{}%
\DontPrintSemicolon
\SetKwProg{Sparse trace test}{Sparse trace test}{}{}
\LinesNotNumbered
\KwIn{$\bullet$ A collection of supports $\cA$ in $\Z^n$ with $\MV(\cA)>0$ and $L[\cA]= \Z^n$\\ \hspace{0.49in}  $\bullet$ A Bernstein-generic polynomial system $\cF \in \C^{\cA}$ \\ \hspace{0.49in}  $\bullet$ $ \emptyset \neq S \subseteq \V^\times(\cF)$ \\ \hspace{0.49in} $\bullet$ A collection $\cB\subseteq \cA$ which is \TAL\ and abundant.
 }
\KwOut{If $S = \V^\times(\cF)$, then \texttt{pass}, else \texttt{fail}.} 
\nl Construct $\cG \in \C^{\cB} \times \cF_{\cA \backslash \cB}$ by choosing generic $\cG_{\cB} \in \C^{\cB}$. \;
\nl Use homotopy continuation to follow the analytic continuation $S_t\subseteq \V^\times(t\cF+(1-t)\cG)$ of $S=S_1$ to $t= 1/2$ and $t= 0$.\;
\nl Compute $\Sigma_1(S_0)$, $\Sigma_1(S_{1/2})$, and $\Sigma_1(S_1)$.\;
\nl \If{$(0,\Sigma_1(S_0))$, $(1/2,\Sigma_1(S_{1/2}))$ and $(1,\Sigma_1(S_1))$ are collinear}{
 \nl  \Return{} \texttt{pass}\;
 }\nl\Else{\nl\Return{} \texttt{fail}}
\caption{Sparse trace test \label{algo:sparsetracetest}}
\end{algorithm} 
\begin{algorithm}[b]
\SetKwIF{If}{ElseIf}{Else}{if}{then}{elif}{else}{}%
\DontPrintSemicolon
\SetKwProg{Constant sparse trace test}{Constant sparse trace test}{}{}
\LinesNotNumbered
\KwIn{$\bullet$ A collection of supports $\cA$ in $\Z^n$ with $\MV(\cA)>0$ and $L[\cA]=\Z^n$\\ \hspace{0.49in} $\bullet$ A Bernstein-generic polynomial system $\cF \in \C^{\cA}$\\ \hspace{0.49in}  $\bullet$ $ \emptyset \neq S \subseteq \V^\times(\cF)$ \\ \hspace{0.49in} $\bullet$ A collection $\cB \subseteq \cU$ which is abundant.
 }
\KwOut{If $S = \V^\times(\cF)$, then \texttt{pass}, else \texttt{fail}.} 
\nl Construct $\cG \in \C^{\cB} \times \cF_{\cA \backslash \cB}$ by choosing generic $\cG_{\cB} \in \C^{\cB}$.  \;
\nl Construct $|S|$-many solutions $S' \subseteq \V^\times(\cG)$.\;
\nl \If{$\Sigma_1(S)=\Sigma_1(S')$}{
 \nl  \Return{} \texttt{pass}\;
 }\nl\Else{\nl\Return{} \texttt{fail}}
\caption{Constant sparse trace test \label{algo:constantsparsetracetest}}
\end{algorithm}

By Lemma \ref{lem:preciseForwardSparse}, the sparse trace tests return \texttt{pass} when $S=\V^\times(\cF)$, even when $\cB$ is not abundant or $L[\cA] \neq \Z^n$. Thus, they return \texttt{fail} only when $S \neq \V^\times(\cF)$. However, from a practical point of view, even this one-sided usage requires \emph{a priori} knowledge of some subset $\cB\subseteq \cA$ which is either \TAL\, or contained in the unnecessary support.  In Section \ref{secsec:approximatingthesupport}, we give simple discrete geometric descriptions of sets satisfying these relationships. The conditions that $\cB$ is abundant and $L[\cA]=\Z^n$ make the algorithm two-sided and are easy to check. In Section \ref{secsec:incompletesolutionsets} we justify the inclusion of these additional conditions on $\cB$.

\subsection{Approximating the support}\label{secsec:approximatingthesupport} 
The use of either sparse trace test requires valid $\cA$ and $\cB$ as input. It is straightforward to check the conditions that $\cB$ is abundant and $L[\cA]=\Z^n$, but finding a candidate for $\cB$ is, \textit{a priori}, not obvious. We provide easy-to-compute subsets of $\cA$ which are \TAL\ or contained in $\cU$, which may be used for $\cB$.

Let $A\subseteq\Z^n$ be finite.  The set of \mydef{$k$-offset} points of $A$ in the \emph{first coordinate} is defined to be
$$
\mydef{\offset(A,k)}=\{\alpha\in A \,\mid\, \alpha+(k+\varepsilon) e_1\not\in\conv(A)\text{ for all }\varepsilon>0\}.
$$
These sets are increasing, that is, if $k<k'$, then $\offset(A,k)\subseteq \offset(A,k')$.  We note that $\offset(A,0)$ consists of the points of $A$ which maximize a linear functional $x \mapsto \langle x, \omega \rangle$ for some $\omega \in \R^n$ with $\omega_1>0$.  For a collection $\cA$, we define $\offset(\cA,k)$ coordinate-wise.

\begin{figure}[htb]
\begin{tikzpicture}[scale=0.75]

\draw[dashed] (-0.5,0) -- (1.5,0) -- (2.5,1) -- (1.5,4) -- (0.5,4) -- (-0.5,2) -- cycle;
\filldraw[fill=gray, fill opacity=0.3] (0,0) -- (2,0) -- (3,1) -- (2,4) -- (1,4) -- (0,2) -- cycle; 
\foreach \i in {-1,...,4} {\foreach \j in {-1,...,5} {\filldraw[color=gray] (\i,\j) circle (.03);}}
\filldraw[fill=white] (0,0) circle (.15);
\filldraw[color=white] (1,0) circle (.15);
\filldraw[pattern=crosshatch] (1,0) circle(.15);
\filldraw (2,0) circle(.15);
\filldraw[fill=white] (1,1) circle (.15);
\filldraw (3,1) circle (.15);
\filldraw[fill=white] (0,2) circle (.15);
\filldraw[fill=white] (1,2) circle (.15);
\filldraw[fill=white] (2,2) circle (.15);
\filldraw[pattern=crosshatch] (2,2) circle (.15);
\filldraw[fill=white] (1,3) circle (.15);
\filldraw (2,3) circle (.15);
\filldraw[color=white] (1,4) circle (.15);
\filldraw[pattern=crosshatch] (1,4) circle (.15);
\filldraw (2,4) circle (.15);
\node at (2,-2.5) {$(a)$};
\begin{scope}[shift={(9,0)}]
\draw[dashed] (0,-0.5) -- (2,-0.5) -- (3,0.5) -- (2,3.5) -- (1,3.5) -- (0,1.5) -- cycle;
\filldraw[fill=gray, fill opacity=0.3] (0,0) -- (2,0) -- (3,1) -- (2,4) -- (1,4) -- (0,2) -- cycle; 
\foreach \i in {-1,...,4} {\foreach \j in {-1,...,5} {\filldraw[color=gray] (\i,\j) circle (.03);}}
\filldraw[fill=white] (0,0) circle (.15);
\filldraw[fill=white] (1,0) circle(.15);
\filldraw[fill=white] (2,0) circle(.15);
\filldraw[fill=white] (1,1) circle (.15);
\filldraw (3,1) circle (.15);
\filldraw (0,2) circle (.15);
\filldraw[fill=white] (1,2) circle (.15);
\filldraw[fill=white] (2,2) circle (.15);
\filldraw[color=white] (1,3) circle (.15);
\filldraw[pattern=crosshatch] (1,3) circle (.15);
\filldraw[color=white] (2,3) circle (.15);
\filldraw[pattern=crosshatch] (2,3) circle (.15);
\filldraw (1,4) circle (.15);
\filldraw (2,4) circle (.15);
\node at (2,-2.5) {$(b)$};
\end{scope}
\end{tikzpicture}
\caption{The sets $\offset(A,.5)$ and $\offset(A,1)$ in the $(a)$ first and $(b)$ second coordinates along with the convex hull of $A$.  The points of $\offset(A,.5)$ are filled, the points of $\offset(A,1)\backslash\offset(A,0.5)$ are cross-hatched, and the remaining points are white.  The shifted polytope illustrates the behavior for multiple points at once.}
\end{figure}
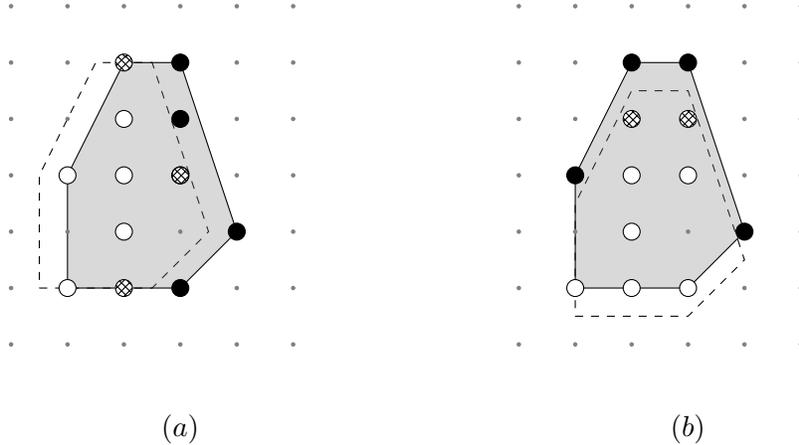

The following specialization of Theorem \ref{thm:combinatorialclassification} provides discrete geometric constructions of subsets of $\cA$ which are either \TAL\ or contained in $\cU$.

\begin{lemma}
\label{lem:ApproximationOfNecessary}
Let $\cA=(A_1,\dots,A_n)$ be a collection of supports in $\Z^n$ such that $\MV(\cA)>0$. Then,
\begin{itemize}
\item $\cA \backslash \offset(\cA,0.5)$ is \TAL\ and
\item $\cA \backslash \offset(\cA,1) \subseteq \cU$.
\end{itemize}
\end{lemma}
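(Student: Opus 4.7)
The plan is to derive both bullet points as specializations of Theorem~\ref{thm:combinatorialclassification}, which provides a polyhedral description of which coefficients of the universal system $\cF(u)$ occur in the reduced rational expression for $\Sigma_1(\V^\times(\cF(u)))$ and with what multiplicity.

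First I would pass to the $x_1$-eliminant of $\cF(u)$. Following D'Andrea and Jeronimo, eliminating $x_2,\ldots,x_n$ from $\cF(u)$ via sparse resultants produces a univariate polynomial
\[
 h(x_1;u)=a_d(u)\,x_1^d + a_{d-1}(u)\,x_1^{d-1}+\cdots+a_0(u)
\]
whose roots (with multiplicities) are the $x_1$-coordinates of $\V^\times(\cF(u))$. Newton's identity then gives $\Sigma_1(\V^\times(\cF(u)))=-a_{d-1}(u)/a_d(u)$, so it suffices to determine which variables $u_{i,\alpha}$ occur in $a_d$ and $a_{d-1}$, and in the latter case to what degree, after reducing the fraction.

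Second, I would analyze the Newton polytopes of $a_d(u)$ and $a_{d-1}(u)$ using the polyhedral combinatorics underlying sparse resultants (mixed fiber polytopes and the Cayley trick). The guiding principle, which Theorem~\ref{thm:combinatorialclassification} will make precise, is that $a_{d-k}(u)$ is supported on a boundary slab of thickness $k$ measured in the $e_1$-direction, and that a variable $u_{i,\alpha}$ enters a monomial of $a_{d-k}$ to degree at most $j$ only when $\alpha\in\offset(A_i,k/j)$. Geometrically, $j$ shifted copies of $\alpha$ must collectively fit inside the depth-$k$ boundary layer of $\conv(A_i)$, forcing $\alpha$ itself to lie within $k/j$ of the $e_1$-boundary.

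Specializing these bounds at $k=1$ yields both conclusions. When $\alpha\notin\offset(A_i,1)$, the variable $u_{i,\alpha}$ cannot appear in $a_d$ (since $\offset(A_i,0)\subseteq\offset(A_i,1)$) or in $a_{d-1}$, so $\alpha\in\cU$, giving the second bullet. When $\alpha\notin\offset(A_i,0.5)$, quadratic contributions of $u_{i,\alpha}$ to $a_{d-1}$ are ruled out (they would demand $\alpha\in\offset(A_i,1/2)$), and $u_{i,\alpha}$ still does not enter $a_d$; after reducing $-a_{d-1}/a_d$, each such coefficient enters $\Sigma_1$ at most linearly, so $\cA\setminus\offset(\cA,0.5)$ is \TAL. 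The hard part of this program is establishing the precise degree bounds on the support of $a_{d-k}(u)$, which is a mixed fiber polytope computation and the main content of Theorem~\ref{thm:combinatorialclassification} in Section~\ref{sec:CombinatorialApproximationsOfTheSupport}; once that theorem is available, the present lemma is simply its specialization to $k\in\{1/2,1\}$.
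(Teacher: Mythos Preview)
Your approach is the paper's: the lemma is presented there as a specialization of Theorem~\ref{thm:combinatorialclassification} (at $\delta=0$ and $\delta=1$), combined with the trace formula $\Sigma_1=-q_{\MV(\cA)-1}/q_{\MV(\cA)}$ of Lemma~\ref{lem:easytrace}, exactly as you outline.

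One point to tighten: \TAL\ requires $\Sigma_1$ to be \emph{jointly} affine linear in all coefficients indexed by $\cA\setminus\offset(\cA,0.5)$, so ruling out $u_{i,\alpha}^2$ in $a_{d-1}$ is not sufficient; you must also exclude cross terms $u_{i,\alpha}u_{j,\beta}$ with both $\alpha,\beta$ at $e_1$-depth greater than $1/2$. Theorem~\ref{thm:combinatorialclassification} asserts precisely this (each monomial of $q_{\MV(\cA)-1}$ contains at most one such ``deep'' variable), and the paper obtains it via the elementary composition bound of Lemma~\ref{lem:CoefficientsInComposition} applied after a scaling of the lattice in the $e_1$-direction, rather than through mixed fiber polytopes. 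Once you invoke the theorem in its stated joint form, your derivation goes through.
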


\begin{example}
\label{eq:ex:FContinued}
We continue the discussion of Systems (\ref{eq:ex:F}) and (\ref{eq:ex:G}) from Example \ref{ex:sparsetrace}. Since the coefficients of $\cF$ and $\cG$ which agree contain $\offset(\cA,0.5)$ with respect to the first coordinate,  Lemmas \ref{lem:preciseForwardSparse}  and  \ref{lem:ApproximationOfNecessary} imply that the trace $\Sigma_1(\V^\times(t\cF + (1-t)\cG))$ is an affine linear function in the remaining coefficients. On the other hand, since the coefficients of $f_1$ and $g_1$ corresponding to the monomials $\{y^2,xy^4,x^2y^4\}$ disagree, Lemma \ref{lem:ApproximationOfNecessary} does not apply with respect to the second coordinate. Table \ref{table:sparseaffine} shows the linearity of the trace in the first coordinate and nonlinearity of the trace in the second coordinate.
\end{example}

\subsection{Incomplete solution sets}
\label{secsec:incompletesolutionsets}
We explain how the conditions $\cB$ is abundant and $L[\cA]=\Z^n$, on $\cA$ and $\cB$, guarantee that the output \texttt{pass} from the sparse trace tests implies the equality $S = \V^\times(\cF)$. 
Our argument relies on results about the monodromy group $G(\pi_{\cA})$ detailed in Section \ref{sec:Monodromy}, and is structured as follows: Suppose that $\cF$ is Bernstein-generic, and $\cG$ in $\C^{\cB} \times \cF_{\cA\backslash\cB}$ is generic.  When the monodromy group of $\pi_{\cA}$ restricted to the preimage of the line containing $\cF$ and $\cG$ is the full symmetric group, there is a loop which induces the transposition swapping $s \in S$ and $s' \in \V^\times(\cF)\backslash S$. When the points in $\V^\times(\cF)$ have distinct first coordinates, $\Sigma_1(S) \neq \Sigma_1(S \cup \{s'\} \backslash \{s\})$. In these cases, since the trace of the analytic continuation of $S \neq \V^\times(\cF)$ is continuous along the corresponding monodromy loop, the traces along this loop cannot lie on a line.

\begin{theorem}
Let $(\cA,\cF,S,\cB)$ be the input to Algorithm \ref{algo:sparsetracetest} or \ref{algo:constantsparsetracetest}. Then the algorithm returns \texttt{pass} if and only if $S=\V^\times(\cF)$.
\end{theorem}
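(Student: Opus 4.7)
The plan is to prove both directions. The forward direction (output \texttt{pass} when $S = \V^\times(\cF)$) follows directly from Lemma \ref{lem:preciseForwardSparse}: the analytic continuation $S_t$ produced in the algorithms coincides with $\V^\times(t\cF+(1-t)\cG)$ for all non-branch $t \in [0,1]$. In Algorithm \ref{algo:sparsetracetest}, the \TAL\ hypothesis on $\cB$ together with $\cF \approx_{\cA\backslash \cB} \cG$ forces $\Sigma_1(S_t)$ to be affine linear in $t$, so the three sample points are collinear. In Algorithm \ref{algo:constantsparsetracetest}, the stronger inclusion $\cB \subseteq \cU$ gives $\cF \approx_{\cN} \cG$, so $\Sigma_1(S_t)$ is constant and $\Sigma_1(S)=\Sigma_1(S')$.

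For the reverse direction, the plan is to argue the contrapositive: assuming $\emptyset \neq S \subsetneq \V^\times(\cF)$, the algorithm outputs \texttt{fail}. The central tool will be the monodromy result developed in Section \ref{sec:Monodromy}: when $\cB$ is abundant and $L[\cA]=\Z^n$, the monodromy group of $\pi_\cA$ restricted to the pencil $\{t\cF + (1-t)\cG : t \in \C\}$ acts as the full symmetric group on $\V^\times(\cF)$. Granted this, the monodromy orbit of the nonempty proper subset $S$ consists of every $|S|$-subset of $\V^\times(\cF)$ and so has size strictly greater than one; equivalently, the algebraic multivalued function $t \mapsto \Sigma_1(S_t)$ has more than one branch.

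The next step is to rule out the degenerate possibility that $\Sigma_1(S_t)$ is affine linear (respectively constant) as a function of $t \in \C$. If it were, then by analytic continuation along loops in the $t$-plane, every branch of this algebraic function would have to coincide with that same affine linear (respectively constant) function, contradicting the multi-branch conclusion. Since $\cG_\cB$ is chosen generically, the specific values at $t=0,1/2,1$ cannot conspire to be collinear (respectively the values at $t=0,1$ cannot coincide), because either accident would impose a proper algebraic condition on $\cG_\cB$ excluded by generic choice.

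The principal obstacle is the monodromy claim itself, namely that the restriction of $\pi_\cA$ to a pencil through $\cF$ has the full symmetric group as monodromy whenever $\cB$ is abundant and $L[\cA] = \Z^n$. This is an extension of Esterov's theorem on monodromy of universal sparse systems and occupies Section \ref{sec:Monodromy}. A secondary but nontrivial point is ensuring that the deterministic sample points $t = 0, 1/2, 1$ detect nonlinearity: this reduces to genericity of $\cG$ and the algebraic rigidity of $\Sigma_1(S_t)$.
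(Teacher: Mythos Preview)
Your forward direction and overall strategy for the converse match the paper's approach: invoke Theorem~\ref{thm:monodromy} to get the full symmetric group over $\C^{\cB}\times\cF_{\cA\backslash\cB}$, then restrict to the pencil. However, there is a genuine gap in your backward argument. You write that the monodromy orbit of $S$ consists of every $|S|$-subset of $\V^\times(\cF)$ ``and so has size strictly greater than one; equivalently, the algebraic multivalued function $t\mapsto\Sigma_1(S_t)$ has more than one branch.'' These statements are \emph{not} equivalent: the map from $|S|$-subsets to their first-coordinate sums can collapse the orbit. If, for instance, two solutions of $\cF$ shared the same first coordinate, then the transposition swapping them would move $S$ to a different subset with the \emph{same} value of $\Sigma_1$, and nothing in your argument rules out the possibility that every subset in the orbit has the same sum.

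The paper closes this gap by invoking Lemma~\ref{lem:distinctfirstcoordinates}: under the standing hypotheses ($L[\cA]=\Z^n$ and $\cB$ abundant, hence $\cA$ neither lacunary nor triangular), the first coordinates of the points in $\V^\times(\cF)$ are pairwise distinct. Once that is known, pick any $s\in S$ and $s'\in\V^\times(\cF)\setminus S$; the transposition $(s\ s')$ lies in the monodromy group and carries $S$ to a subset whose $\Sigma_1$ differs by $s'_1-s_1\neq 0$. This is the missing idea: you need the distinct-first-coordinates lemma (or an equivalent genericity argument) to pass from ``more than one subset in the orbit'' to ``more than one value of $\Sigma_1$ in the orbit.''
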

\begin{proof}
If $\cB$ is \TAL\ (respectively, $\cB \subseteq \cU$), then Lemma \ref{lem:preciseForwardSparse} implies that Algorithm \ref{algo:sparsetracetest} (respectively, Algorithm \ref{algo:constantsparsetracetest}) returns \texttt{pass} when $S=\V^\times(\cF)$ is given as input. It remains to be shown that the algorithms return \texttt{fail}, almost surely, when $S \neq \V^\times(\cF)$.

The conditions on the input to Algorithms \ref{algo:sparsetracetest} or \ref{algo:constantsparsetracetest} imply that the monodromy group $\pi_{\cA}$ restricted to $\C^{\cB} \times \cF_{\cA \backslash \cB}$ is the full symmetric group by Theorem \ref{thm:monodromy}. Zariski showed that the further restriction to $t \cF+(1-t) \cG$ preserves the monodromy group \cite{Zariski:1937}. Theorem \ref{lem:distinctfirstcoordinates} implies that the first coordinates of points in $\V^\times(\cF)$ are distinct. By the argument above the statement of the theorem, the algorithms return \texttt{fail}.
\end{proof}

\begin{remark}
The condition that $\cB$ is abundant simplifies the proofs in Section \ref{sec:Monodromy}. Certainly, less restrictive conditions also suffice, as illustrated by the the classical trace test. Finding precise conditions is left for future research.
\end{remark}

Section \ref{sec:sec:LacunaryTriangular} details why the condition $L[\cA]=\Z^n$ is necessary. The following example illustrates one type of issue that arises when $L[\cA] \neq \Z^n$.

\begin{example}
Suppose that $\cF=(f_1,f_2)$ is a Bernstein-generic system supported on the support $\cA=(A,A)$ where $A=\{(0,0), (2,0), (4,0), (3,1), (0,2), (2,2)\}$, as depicted in Figure \ref{fig:converseFailure}. We note that the index of $L[\cA]$ in $\Z^2$ is $2$. For any solution $(s_1,s_2) \in \V^\times(\cF)$, the point $(-s_1,-s_2)$ is also a solution. Hence, if $S \subseteq \V^\times(\cF)$ consists of pairs of the form $(s_1,s_2)$ and $(-s_1,-s_2)$, then the trace of $S$ is zero. In particular, the trace is constant under a perturbation of any of the coefficients and the sparse trace tests cannot recognize that $S$ might not be complete.
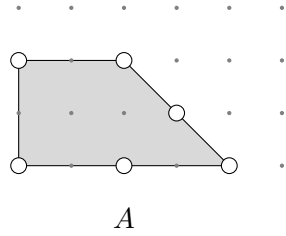
\begin{figure}[!htbp]
\begin{tikzpicture}[scale=.7]
\filldraw[fill=gray, fill opacity=0.3] (0,0) -- (4,0) -- (2,2) -- (0,2) -- cycle;
\foreach \i in {0,...,5} {\foreach \j in {0,...,3} {\filldraw[color=gray] (\i,\j) circle (.03);}}
\filldraw[fill=white] (0,0) circle (.15);
\filldraw[fill=white] (2,0) circle (.15);
\filldraw[fill=white] (4,0) circle(.15);
\filldraw[fill=white] (2,2) circle (.15);
\filldraw[fill=white] (3,1) circle (.15);
\filldraw[fill=white] (0,2) circle (.15);
\node at (2,-1) {$A$};
\end{tikzpicture}
\caption{Support $A$ such that the sparse trace tests cannot be applied on a polynomial system supported on $(A,A)$. }\label{fig:converseFailure}
\end{figure}
\end{example}

\section{Sparse resultants}
\label{sec:CombinatorialApproximationsOfTheSupport}
The efficient application of the sparse trace tests requires finding valid supports $\cB$ for input.  Lemma \ref{lem:ApproximationOfNecessary} provides simple candidates. It is a specialization of Theorem \ref{thm:combinatorialclassification}, proven here using the theory of sparse resultants.

\subsection{Supports and sparse resultants}

Let ${\mathcal A} = (A_1,\ldots,A_N)$ be a collection of finite subsets $A_i \subseteq \Z^n$. Note that $n$ and $N$ are not required to be equal. For $I \subseteq \mydef{[N]} \vcentcolon=\{1,2,\ldots,N\}$, we write $\mydef{\cA_I}$ for the subset $\{A_i\}_{i \in I}$.  The \mydef{defect} of a subset  $\cA_I \subseteq \cA$ is $\mydef{\defect(\cA_I)} \vcentcolon= \rank(\cA_I) - |I|$. When $n=N$, the following theorem of Minkowski characterizes when the mixed volume of $\cA$ is nonzero.

\begin{lemma}
\label{lem:Minkowski}
The mixed volume of $\cA=(A_1,\ldots,A_n)$ is nonzero if and only if $\defect(\cA_I)\geq 0$ for all $I \subseteq [n]$.
\end{lemma}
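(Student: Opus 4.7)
The plan is to prove both implications of Minkowski's criterion using only standard properties of mixed volume (multilinearity, monotonicity, and translation invariance) together with Rado's theorem on linearly independent systems of representatives.

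For the forward direction, I will argue the contrapositive. If $\defect(\cA_I) < 0$ for some $I \subseteq [n]$, then after translating each $\conv(A_i)$ so that a chosen vertex sits at the origin, the $|I|$ polytopes $\{\conv(A_i)\}_{i \in I}$ all lie in a common linear subspace $V \subseteq \R^n$ of dimension $d := \rank(\cA_I) < |I|$. I will then invoke the standard vanishing result: if strictly more polytopes than the dimension of their ambient span appear in a mixed volume, the mixed volume vanishes. A clean proof uses that $\MV(P_1,\ldots,P_n)$ is, up to the factor $n!$, the coefficient of $\prod_j t_j$ in the polynomial $\operatorname{vol}_n\bigl(\sum_j t_j P_j\bigr)$, together with the fact that this polynomial has partial degree at most $d$ in the variables $(t_i)_{i \in I}$, which follows from a Fubini decomposition of $\R^n$ along $V$: the contribution to the $n$-dimensional volume coming from $\sum_{i \in I} t_i P_i \subseteq V$ is limited to $d$ dimensions. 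Since $|I| > d$, the coefficient of $\prod_j t_j$ must vanish, and translation invariance transfers the conclusion back to the original polytopes.

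For the reverse direction, I will apply Rado's theorem, the matroid generalization of Hall's marriage theorem, to the finite sets of differences $D_i := \{\alpha - \beta : \alpha,\beta\in A_i\} \subseteq \R^n$ viewed inside the linear matroid on $\R^n$. The rank of $\bigcup_{i \in I} D_i$ in this matroid equals $\dim\bigl(\sum_{i\in I} L[A_i]\otimes_\Z\R\bigr) = \rank(\cA_I)$, so the hypothesis $\rank(\cA_I) \ge |I|$ for all $I$ is precisely Rado's condition for the existence of a linearly independent transversal $v_i = \alpha_i - \beta_i \in D_i$. The segments $\Delta_i := \conv(\alpha_i,\beta_i) \subseteq \conv(A_i)$ then satisfy $\MV(\Delta_1,\ldots,\Delta_n) = |\det(v_1,\ldots,v_n)| > 0$ by a direct computation of the volume of the parallelepiped $\sum_j t_j \Delta_j$, and monotonicity of mixed volume under inclusion gives $\MV(\cA) \ge \MV(\Delta_1,\ldots,\Delta_n) > 0$.

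The main obstacle is essentially cosmetic: packaging the vanishing lemma in the forward direction cleanly, without getting bogged down in the Fubini bookkeeping. Both implications ultimately reduce to identifying the correct classical ingredient (Rado's theorem, respectively the vanishing of mixed volume in low-dimensional configurations), so no genuinely new idea is required beyond recognizing that the rank condition on the lattices $L[\cA_I]$ is exactly the matroid-transversal condition on the differences.
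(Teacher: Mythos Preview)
The paper does not prove this lemma: it is stated as a classical result due to Minkowski and invoked without argument. So there is no paper proof to compare against.

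Your proposal is correct. The forward direction is the standard vanishing argument: after translating each $\conv(A_i)$ with $i\in I$ into the $d$-dimensional subspace $V$ spanned by $L[\cA_I]\otimes\R$, the Fubini decomposition you describe shows that $\operatorname{vol}_n\bigl(\sum_j t_j P_j\bigr)$ has total degree at most $d$ in the block of variables $(t_i)_{i\in I}$, so the monomial $\prod_j t_j$, which has degree $|I|>d$ in that block, cannot appear. The reverse direction is a clean application of Rado's matroid transversal theorem to the difference sets $D_i$; the hypothesis $\rank(\cA_I)\ge |I|$ is exactly Rado's rank condition, the resulting independent transversal gives segments $\Delta_i\subseteq\conv(A_i)$ whose mixed volume is $|\det(v_1,\ldots,v_n)|>0$, and monotonicity finishes. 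One terminological quibble: ``partial degree at most $d$ in the variables $(t_i)_{i\in I}$'' should read ``total degree at most $d$'', since that is what the Fubini argument delivers and what you actually use.
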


We use the following generalization of Minkowski's result in Lemma \ref{lem:WIdimension} of Section \ref{sec:sec:monodromyproof}.
\begin{lemma}
\label{lem:GeneralMinkowski}
For supports $\cA=(A_1,\ldots,A_k)$, $A_i \subseteq \Z^n$, and generic $\cF \in \C^{\cA}$, the dimension of $\V^\times(\cF)$ is $n-k$ when $\textrm{def}(\cA_I) \geq 0$ for all $I \subseteq [k]$. Otherwise, $\V^\times(\cF) = \emptyset$.
\end{lemma}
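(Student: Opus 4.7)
The plan is to prove the dichotomy by treating the two cases separately, using an incidence-variety dimension count for the empty case and a reduction to Minkowski's theorem for the dimension count.

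For the case where $\defect(\cA_I) < 0$ for some $I \subseteq [k]$, I would show that the subsystem $\{f_i\}_{i \in I}$ already has empty torus zero set for generic coefficients, which forces $\V^\times(\cF) = \emptyset$. Setting $r = \rank(\cA_I) < |I|$, I would choose a basis of $\Z^n$ whose first $r$ vectors generate the saturation of $L[\cA_I]$. The dual monomial change of coordinates is a torus automorphism, and after multiplying each $f_i$ with $i \in I$ by an appropriate Laurent monomial, each such polynomial becomes a Laurent polynomial in only the first $r$ new coordinates. A standard incidence-variety argument then finishes the case: the subvariety of $(\C^\times)^r \times \C^{\cA_I}$ cut out by the universal vanishing conditions $f_i(y) = 0$ for $i \in I$ fibers over $(\C^\times)^r$ with linear fibers of codimension $|I|$, so its total dimension is $r + \dim \C^{\cA_I} - |I| < \dim \C^{\cA_I}$. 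Its image under projection to $\C^{\cA_I}$ is therefore not dense, so a generic choice of coefficients lies outside this image and yields no torus zero.

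For the case where $\defect(\cA_I) \geq 0$ for every $I \subseteq [k]$, I would reduce to a square system by augmenting $\cA$ with $n - k$ copies of the standard simplex $B = \{\mathbf{0}, e_1, \ldots, e_n\}$, producing $\cA' = (A_1, \ldots, A_k, B, \ldots, B)$. Verifying Minkowski's criterion for $\cA'$ is a short case split on $J = I \sqcup I'$ with $I \subseteq [k]$ and $I' \subseteq \{k+1, \ldots, n\}$: if $I' = \emptyset$ the defect equals $\defect(\cA_I) \geq 0$ by hypothesis, and if $I' \neq \emptyset$ the full-rank support $B$ forces $\rank(\cA'_J) = n$, so $\defect(\cA'_J) = n - |J| \geq 0$. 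Lemma \ref{lem:Minkowski} then yields $\MV(\cA') > 0$, and Theorem \ref{thm:BKK} produces a nonempty finite intersection $\V^\times(\cF) \cap \V^\times(g_1) \cap \cdots \cap \V^\times(g_{n-k})$ for generic $g_j \in \C^B$. Since each $g_j$ cuts by a generic affine hyperplane section of the torus, which lowers the dimension of any positive-dimensional component by exactly one and removes any zero-dimensional component, landing on a nonempty zero-dimensional set after exactly $n - k$ cuts pins $\dim \V^\times(\cF) = n - k$.

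The main technical obstacle is the first case, specifically handling the saturation of $L[\cA_I]$ so that the monomial substitution is a genuine automorphism of $(\C^\times)^n$; the rest of the argument is essentially bookkeeping that reduces matters to Lemma \ref{lem:Minkowski} and Theorem \ref{thm:BKK}.
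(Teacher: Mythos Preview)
The paper states Lemma~\ref{lem:GeneralMinkowski} without proof, presenting it only as a known generalization of Minkowski's theorem (Lemma~\ref{lem:Minkowski}), so there is no argument in the paper to compare against.

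Your proposal is correct and follows a standard route. In the negative-defect case the incidence-variety count is clean once you note that the $|I|$ linear conditions $g_{i,u}(y)=0$ involve disjoint blocks of variables and are each nontrivial for $y\in(\C^\times)^r$, hence independent; the dimension bound $r+|\cA_I|-|I|<|\cA_I|$ then forces the projection to coefficient space to miss a dense open set. In the nonnegative-defect case the augmentation by $n-k$ copies of the simplex and the verification of Minkowski's criterion are exactly right. The only place to be slightly more careful is the passage from ``generic $\cF'\in\C^{\cA'}$'' to ``generic $\cF\in\C^{\cA}$ together with generic hyperplanes relative to $\V^\times(\cF)$'': you need that the Bernstein-generic locus in $\C^{\cA'}$ projects dominantly to $\C^{\cA}$, and that over a generic $\cF$ the fibre of this locus meets the (open dense) set of hyperplane tuples transverse to $\V^\times(\cF)$. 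Both are immediate from openness of the relevant conditions, so this is a routine genericity bookkeeping step rather than a gap. Your conclusion---no component of dimension exceeding $n-k$ by finiteness after $n-k$ cuts, and at least one component of dimension $n-k$ by nonemptiness---is the right way to finish.
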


A collection $\cA$ is \mydef{essential} if its defect is $-1$ and every proper subset has nonnegative defect.  Let $\mydef{Q} = \{\textbf{0},e_1\} \subseteq \Z^n$.
When $n=N$ and $\MV(\cA)>0$, there is a unique subset $\mydef{\cA'}\subseteq \cA$ such that $\{Q\}\cup\cA'$ is essential {\cite{DAndreaJeronimo:2008,NewtonSturmfels:1994}}.

For a square polynomial system $\cF=\{f_1,\dots,f_N\}$ supported on $\cA$, we define $\mydef{Z(\cA)}$ to be the set of polynomial systems in $\C^{\cA}$ which have a solution in $(\C^\times)^{n}$. When $\cA$ has a unique essential subset, the Zariski closure of $Z(\cA)$ is a hypersurface in $\C^{\cA}$ defined by an irreducible polynomial in $\Z[u_{i,\alpha}]$, see \cite{NewtonSturmfels:1994}. In particular, this polynomial is unique, up to a nonzero constant. Following the terminology in \cite{Poisson}, we call this polynomial the \mydef{$\cA$-eliminant} or \mydef{sparse eliminant} and denote it by  \mydef{$\elim_\cA$}. If $\cA$ does not have a unique essential subset, then we define $\elim_\cA = 1$. 

Historically, the sparse eliminant has been referred to as the \emph{sparse resultant}. We use the following redefinition of the sparse resultant (as in \cite{Minimair}) which produces more uniform statements \cite{Poisson}.  When $\elim_\cA \neq 1$, the restriction of $\pi_\cA$ to $\pi^{-1}_{\cA}(Z(\cA))$ is generically \mydef{$d_{\cA}$}-to-one. We define the \mydef{$\cA$-resultant} or \mydef{sparse resultant} to be
$$\mydef{\res_{\cA}} = \elim_{\cA}^{d_{\cA}}.$$

For the universal polynomial system $\cF(u)$ over $\cA$, the sparse resultant $\mydef{\res_{\cA}(\cF(u))}$ is a polynomial in $\Z[u_{i,\alpha}]$.  Therefore, it can be evaluated at specific coefficients $\cF \in \C^{\cA}$. We write this in any of the following ways:
$$\res_{\cA}(\cF) = \res_{\cA}(f_0,\ldots,f_n) = \res_{\cA}(\{c_{i,\alpha}\}).$$

\subsection{Approximation of the necessary support}

Let $\cA$ be a square set of supports with $\MV(\cA)>0$.  In the hidden variable technique, we view $\cF(u)$ in the ring $\mathbb{C}[x_1][x_2,\dots,x_n]$ by treating $x_1$ as a coefficient. This turns $\cF(u)$ into a system of $n$ equations in $n-1$ variables, supported on the collection of projections $\pi(\cA)=(\pi(A_1),\ldots,\pi(A_n))$ under the  forgetful map $\mydef{\pi}:\Z^n\to\Z^{n-1}$ which forgets the first coordinate.  We let $\mathcal{G}(v)$ be the universal system over the support $\pi(\cA)$.

\begin{lemma}
\label{lem:SparseResultantExists}
If $\cA$ is a square set of supports and $\MV(\cA)>0$, then $\pi(\cA)$ contains a unique essential subset.
\end{lemma}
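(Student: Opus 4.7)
The plan is to leverage the submodularity of the rank function together with the characterization in Lemma~\ref{lem:Minkowski}. The key preliminary observation is that the projection $\pi:\Z^n\to\Z^{n-1}$ has one-dimensional kernel $\Z e_1$, so the rank of any sublattice of $\Z^n$ drops by at most one under $\pi$. Applying this to $L[\cA_I]$ gives $\rank(\pi(\cA)_I)\geq \rank(\cA_I)-1$, and combining with $\defect(\cA_I)\geq 0$ (from Lemma~\ref{lem:Minkowski}, using the hypothesis $\MV(\cA)>0$) yields the defect floor
\[
\defect(\pi(\cA)_I)\geq -1 \quad\text{for every } I\subseteq [n].
\]

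With this floor in hand, existence of an essential subcollection is immediate: $\pi(\cA)$ consists of $n$ supports in $\Z^{n-1}$, so its total rank is at most $n-1$ and $\defect(\pi(\cA))\leq -1$; the floor forces equality. I would then take $\mathcal{E}\subseteq \pi(\cA)$ to be a minimum-cardinality subcollection with defect $-1$. By minimality together with the floor, every proper subcollection of $\mathcal{E}$ has defect $\geq 0$, so $\mathcal{E}$ is essential.

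For uniqueness, I would invoke the submodular inequality
\[
\defect(\pi(\cA)_{I\cup J})+\defect(\pi(\cA)_{I\cap J})\leq \defect(\pi(\cA)_I)+\defect(\pi(\cA)_J),
\]
which follows from the standard identity $\rank(M+N)+\rank(M\cap N)=\rank(M)+\rank(N)$ for sublattices of $\Z^{n-1}$, together with the containment $L[\pi(\cA)_{I\cap J}]\subseteq L[\pi(\cA)_I]\cap L[\pi(\cA)_J]$. If $\mathcal{E}_1,\mathcal{E}_2\subseteq\pi(\cA)$ are essentials indexed by $I_1,I_2$, the right side equals $-2$, and since each left-side term is bounded below by $-1$, both terms must equal $-1$. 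In the nested case, the smaller of $\mathcal{E}_1,\mathcal{E}_2$ sits properly inside the other while still having defect $-1$, violating essentiality; in the incomparable case, $\pi(\cA)_{I_1\cap I_2}$ is a proper subcollection of each $\mathcal{E}_j$ with defect $-1$, again violating essentiality. Hence $I_1=I_2$.

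I expect the main obstacle to be establishing the defect floor $-1$: this is the single step where the $\MV(\cA)>0$ hypothesis is actually needed, and it is what distinguishes $\pi(\cA)$ (which has essentials) from an arbitrary collection of supports in $\Z^{n-1}$. Once the floor is in place, existence reduces to minimality and uniqueness to a standard matroid-style submodular argument.
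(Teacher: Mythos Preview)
Your argument is correct and takes a genuinely different route for uniqueness than the paper. Both proofs begin by establishing the defect floor $\defect(\pi(\cA)_I)\geq -1$ from Lemma~\ref{lem:Minkowski} and deduce existence of an essential subcollection from $\defect(\pi(\cA))=-1$. For uniqueness, however, the paper lifts the two candidate essentials $\pi(\cA_I),\pi(\cA_J)$ back to $\Z^n$, observes that $\defect(\cA_I)=\defect(\cA_J)=0$ with $e_1$ lying in the span of each $L[\cA_I]$, and then bounds $\rank(\cA_{I\cup J})\leq |I|+|J|-1$ via that shared direction to force a negative defect in $\Z^n$, contradicting $\MV(\cA)>0$. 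You instead remain entirely in $\Z^{n-1}$ and run the standard matroid-style argument: submodularity of rank together with the floor forces $\defect(\pi(\cA)_{I_1\cap I_2})=-1$, and this proper subcollection contradicts essentiality. Your approach is cleaner and applies verbatim to any family of supports satisfying a defect floor of $-1$; the paper's approach is more geometric, making explicit that the distinguished direction $e_1$ (the kernel of $\pi$) is what links two putative essentials and produces the contradiction.
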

\begin{proof}
Since $\MV(\cA)>0$, the defect of any subset of $\cA$ is at least zero by Lemma \ref{lem:Minkowski}. On the other hand, since $|\cA| = n$, the defect of $\cA$ is at most zero and so $\rank(\cA)=n$. Since  $\rank(\cA) = n$, we must have that $\rank(\pi(\cA)) = n-1$ implying that $\pi(\cA)$ has defect $-1$. Thus, $\pi(\cA)$ contains at least one essential subset.

To see that this subset must be unique, suppose that $\pi(\cA_I)$ and $\pi(\cA_J)$ are two distinct essential subsets of $\pi(\cA)$. 
The defect of $\pi(\cA_I)$ is one less than the defect $\cA_I$ when $e_1$ is contained in the affine span of $\cA_I$ and equal to the defect of $\cA_I$ otherwise.
Since $\MV(\cA) > 0$, we conclude that $\textrm{def}(\cA_I) = \textrm{def}(\cA_J) = 0$ 
 and $e_1$ is in the affine span of both $\cA_I$ and $\cA_J$. This implies that $$\rank(\cA_{I\cup J}) \leq \rank(\cA_I)+\rank(\cA_J)-1 = |I|+|J|-1 < |I|+|J|.$$ But then $\cA_{I\cup J}$ has negative defect, contradicting the hypothesis that $\MV(\cA)> 0$.
\end{proof}
Lemma \ref{lem:SparseResultantExists} shows that $\res_{\pi(\cA)} \in \Z[v_{i,\beta}]$ is not $1$. Writing the polynomials $f_i$ of $\cF(u)$ as polynomials supported on $\pi(\cA)$ gives
$$f_i = \sum_{\beta \in \pi(A_i)} \mydef{h_{i,\beta}(x_1)} x^\beta \in \C[x_1][x_2,\ldots,x_n].$$ 
Thus, evaluating $\res_{\pi(\cA)}$ at the system $\cF(u)$ amounts to substituting $h_{i,\beta}(x_1)$ for $v_{i,\beta}$, that is,
\begin{equation}
\label{eq:hiddenVariableEquality}
\left.\res_{\pi(\cA)}(\mathcal G(v))\right|_{v_{i,\beta}=h_{i,\beta}(x_1)} = \res_{\pi(\cA)}(\cF(u))\in \Z[u_{i,\alpha}][x_1].
\end{equation}
This polynomial vanishes at all points $a_1$ such that the system $\cF(a_1,x_2,\ldots,x_n)$ has a solution in $(\C^\times)^{n-1}$.  The fact that $a_1$ may be zero is reflected in a power of $x_1$ appearing as a factor of this polynomial, as exhibited in the following lemma.
\begin{lemma}[{\cite[Proposition~4.7 and  Theorem~1.4]{Poisson}}]
\label{lem:hiddenPoisson}
Let $\cA$ be square with $\MV(\cA)>0$. Then there exists $d \in \Z$ such that, up to a constant,
$$\res_{\pi(\cA)}(\cF(u)) = x_1^{d}\res_{Q,A_1,\ldots,A_n}(z-x_1,f_1,\ldots,f_n)|_{z = x_1}.$$
Moreover, for generic $\cF \in \C^{\cA}$,
$$\res_{\pi(\cA)}(\cF) = x_1^d \prod_{s \in \V^\times(\cF)} (x_1-s_1).$$
\end{lemma}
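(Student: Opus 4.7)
My plan is to combine two standard ingredients from the sparse resultant toolbox: a hidden-variable identity that rewrites the projected resultant $\res_{\pi(\cA)}(\cF(u))$ as the mixed resultant of the enlarged system $(z-x_1, f_1,\ldots,f_n)$, and a Poisson-type product formula that evaluates such a mixed resultant as a product over the torus zeros of $\cF$. Both assertions appear in \cite{Poisson}; the real work is aligning them and extracting the extraneous monomial factor in $x_1$.

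First I would establish the identity
\[
\res_{\pi(\cA)}(\cF(u)) \;=\; C\cdot x_1^{d}\, \res_{Q,A_1,\ldots,A_n}(z-x_1,f_1,\ldots,f_n)\big|_{z=x_1}
\]
in $\Z[u_{i,\alpha}][x_1]$, up to a nonzero constant $C$. By \eqref{eq:hiddenVariableEquality} and Lemma \ref{lem:SparseResultantExists}, the zero locus of the left side consists exactly of those $a_1\in\C$ for which $\cF(u)(a_1,x_2,\ldots,x_n)$ has a root in $(\C^\times)^{n-1}$. The right side, after setting $z=x_1$, vanishes at $a_1$ exactly when the augmented system $(z-x_1,f_1,\ldots,f_n)$ has a common torus zero with $z=x_1=a_1$, i.e., when $a_1=s_1$ for some $s\in\V^\times(\cF(u))$. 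These loci agree away from $x_1=0$, so the two polynomials agree up to a monomial factor $x_1^d$ and a scalar. That $\{Q,A_1,\ldots,A_n\}$ has a unique essential subset (so $\elim_{Q,A_1,\ldots,A_n}\neq 1$) is needed to make the right-hand resultant nontrivial; it follows by the same defect argument as in Lemma~\ref{lem:SparseResultantExists}.

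Next, I would apply the sparse Poisson product formula (Theorem 1.4 of \cite{Poisson}) to $\res_{Q,A_1,\ldots,A_n}(z-x_1,f_1,\ldots,f_n)$. Since $z-x_1$ is supported on $Q$ and $(f_1,\ldots,f_n)$ is Bernstein-generic, Poisson's formula gives
\[
\res_{Q,A_1,\ldots,A_n}(z-x_1,f_1,\ldots,f_n) \;=\; \kappa \cdot \prod_{s\in\V^\times(\cF)}(z-s_1),
\]
where $\kappa$ is a nonzero constant coming from initial forms of the $f_i$ in the direction conjugate to $Q$. Substituting $z=x_1$ and combining with the previous step yields $\res_{\pi(\cA)}(\cF) = (C\kappa)\cdot x_1^d\prod_{s}(x_1-s_1)$, which gives both displayed identities of the lemma once the scalar is absorbed.

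The principal obstacle is pinning down the exponent $d$. The projection $\pi$ loses information precisely at $x_1=0$, and this leakage inflates the hidden-variable resultant by a power of $x_1$ that does \emph{not} correspond to any torus root of $\cF$. Determining $d$ requires the same defect bookkeeping as in Lemma~\ref{lem:SparseResultantExists}: for each sub-collection $\cA_I$ one must track whether $e_1$ lies in the affine span of $\cA_I$, since this governs whether $\defect(\pi(\cA_I))$ equals $\defect(\cA_I)$ or drops by one, which in turn controls how much of the $x_1=0$ locus contributes to the left-hand resultant. Once this combinatorial accounting is carried out, nonvanishing of $C$ and $\kappa$ follows from Bernstein-genericity, and the two displayed identities emerge together.
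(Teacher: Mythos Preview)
The paper does not give its own proof of this lemma: it is imported directly from \cite{Poisson} (Proposition~4.7 and Theorem~1.4), with no argument supplied in the text. So there is no in-paper proof to compare against; your sketch is essentially an attempt to reconstruct the cited results.

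As a reconstruction, the overall shape is right (hidden-variable identity followed by the Poisson product formula), but the first step has a genuine gap. You argue that the two polynomials in $x_1$ have the same vanishing locus away from $x_1=0$ and conclude they differ by $x_1^d$ times a scalar. Matching zero sets does not give polynomial equality: you must also match multiplicities, or equivalently match degrees and show the roots are simple on both sides. Even for generic $\cF$ you would need (i) that both sides have the same $x_1$-degree (which the paper gets from separate degree formulas, Lemma~\ref{lem:SparseResultantExists} plus the shadow-polytope statement and Corollary~\ref{cor:degreeHidden}) and (ii) that the first coordinates $s_1$ of $\V^\times(\cF)$ are distinct so that $\prod_s(x_1-s_1)$ is squarefree, and that $\res_{\pi(\cA)}(\cF)$ is likewise squarefree in $x_1$. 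None of this follows from a set-theoretic zero-locus comparison, and it is precisely what the actual argument in \cite{Poisson} supplies via the multiplicativity and degree properties of the sparse resultant. Your discussion of $d$ also remains a promissory note rather than a computation. In short, the Poisson half of your sketch is fine, but the hidden-variable identity needs the resultant-specific degree and factorization structure, not just agreement of vanishing loci.
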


\begin{lemma}[{\cite[Theorems 4 and 5]{ShadowRojas:1994}}]
As a polynomial in $x_1$, the degree of $\res_{\pi(\cA)}(\cF(u))$ is the mixed volume of the convex hulls of $A_i\cup(\{0\}\times\pi(A_i)).$ 
\end{lemma}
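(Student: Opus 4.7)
The plan is to apply Lemma~\ref{lem:hiddenPoisson} to a \emph{shadowed} enlargement of $\cA$ and transfer the resulting degree back to $\cA$. Set $\cB = (B_1,\ldots,B_n)$ with $B_i = A_i \cup (\{0\}\times \pi(A_i))$. Since $\pi(B_i) = \pi(A_i)$, we have $\pi(\cB) = \pi(\cA)$, so $\res_{\pi(\cB)} = \res_{\pi(\cA)}$ as a polynomial in the universal coefficients $\{v_{i,\beta}\}$. Both $\res_{\pi(\cA)}(\cF(u))$ and $\res_{\pi(\cB)}(\tilde\cF(u))$ are thus specializations of the same polynomial, differing only in that the latter contributes extra summands $\tilde c_{i,(0,\beta)}$ to the substitutions $\tilde h^{\cB}_{i,\beta}$ whenever $(0,\beta)\notin A_i$.

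First I would apply Lemma~\ref{lem:hiddenPoisson} to $\cB$ with generic $\tilde\cF\in\C^\cB$, obtaining
$$\res_{\pi(\cB)}(\tilde\cF) = x_1^{d_\cB}\prod_{s\in\V^\times(\tilde\cF)}(x_1-s_1),$$
with $|\V^\times(\tilde\cF)| = \MV(\cB)$ by Theorem~\ref{thm:BKK}. To see $d_\cB=0$, note that evaluating at $x_1=0$ gives $\tilde h^{\cB}_{i,\beta}(0) = \tilde c_{i,(0,\beta)}$; thus $\res_{\pi(\cB)}(\tilde\cF)|_{x_1=0}$ equals $\res_{\pi(\cA)}$ evaluated at a generic point of $\C^{\pi(\cA)}$, and this is nonzero because $\res_{\pi(\cA)}$ is a nontrivial polynomial by Lemma~\ref{lem:SparseResultantExists}. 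Hence $\deg_{x_1}\res_{\pi(\cB)}(\tilde\cF) = \MV(\cB)$ for generic $\tilde\cF$.

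To transfer the degree back to $\cA$, I would compare the leading-$x_1$ contributions in the two substitutions. After translating so that $A_i \subseteq \Z_{\geq 0}^n$, the shadow $B_i\setminus A_i$ contributes only at $\alpha_1=0$, so for every $(i,\beta)$ we have $\deg_{x_1}(h^{\cA}_{i,\beta}) = \deg_{x_1}(\tilde h^{\cB}_{i,\beta})$, and both leading coefficients coincide with the one indexed by the $\alpha_1$-maximal lift $\alpha^{\ast}(i,\beta)\in A_i$ of $\beta$. Expanding the substitution in $\res_{\pi(\cA)}$ monomial by monomial, the leading $x_1$-coefficient of $\res_{\pi(\cA)}(\cF(u))$ is the same polynomial in the shared coefficients $\{c_{i,\alpha^{\ast}(i,\beta)}\}$ as that of $\res_{\pi(\cB)}(\tilde\cF(u))$. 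The latter does not vanish identically (witnessed by $\deg_{x_1}=\MV(\cB)$), so neither does the former, yielding $\deg_{x_1}\res_{\pi(\cA)}(\cF(u)) = \MV(\cB)$ for generic $\cF \in \C^\cA$.

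The main obstacle is the third paragraph: identifying the face of the Newton polytope of $\res_{\pi(\cA)}$ that maximizes the linear functional $\sum_{i,\beta} e_{i,\beta}\deg_{x_1}(h_{i,\beta})$ and verifying that its contribution to the leading $x_1$-coefficient is the same under both the $\cA$- and $\cB$-substitutions. The equality $\deg_{x_1}(h^{\cA}_{i,\beta}) = \deg_{x_1}(\tilde h^{\cB}_{i,\beta})$ forces the maximizing face to agree; what remains is careful monomial-by-monomial bookkeeping to confirm that the contributing coefficients reduce to the shared $c_{i,\alpha^{\ast}(i,\beta)}$ in both cases, which is the technical heart of the argument.
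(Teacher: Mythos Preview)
The paper does not prove this lemma; it is quoted from Rojas \cite{ShadowRojas:1994} as an external input, so there is no ``paper's own proof'' to compare against. Your proposal is therefore an attempt to \emph{derive} the cited result from the other cited ingredient, Lemma~\ref{lem:hiddenPoisson}, which is a perfectly reasonable and self-contained route.

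Your strategy is sound, and the first two paragraphs are correct: passing to the shadow $\cB$ and showing $d_{\cB}=0$ by evaluating at $x_1=0$ is clean. The weak point is exactly where you flag it, in the third paragraph, but the obstacle is smaller than you suggest. Write $P(v)=\res_{\pi(\cA)}(\cG(v))=\sum_\gamma a_\gamma v^\gamma$ and let $D=\max_{\gamma:\,a_\gamma\neq 0}\langle\gamma,d\rangle$ with $d_{i,\beta}=\deg_{x_1}h_{i,\beta}$. The coefficient of $x_1^D$ in $P(h^{\cA})$ is
\[
\sum_{\gamma:\,\langle\gamma,d\rangle=D} a_\gamma \prod_{i,\beta} c_{i,\alpha^\ast(i,\beta)}^{\gamma_{i,\beta}}.
\]
The key observation you are missing is that the map $(i,\beta)\mapsto (i,\alpha^\ast(i,\beta))$ is \emph{injective} (for fixed $i$, distinct $\beta$'s give distinct $\alpha^\ast$ since $\pi(\alpha^\ast(i,\beta))=\beta$), so the $c_{i,\alpha^\ast(i,\beta)}$ are distinct indeterminates and distinct $\gamma$'s contribute distinct monomials in the display above. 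Hence this coefficient is nonzero as a polynomial in the $c$'s, giving $\deg_{x_1}P(h^{\cA})=D$ outright. The identical computation for $\cB$ (same $d$, same $\alpha^\ast$, same leading coefficients) gives $\deg_{x_1}P(\tilde h^{\cB})=D$ as well, and since you have already shown the latter equals $\MV(\cB)$, you are done. No delicate face-identification or cancellation analysis is needed beyond this injectivity.

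One minor caveat: your translation ``so that $A_i\subseteq\Z_{\geq 0}^n$'' shifts each $h_{i,\beta}$ by a power of $x_1$ and hence multiplies $\res_{\pi(\cA)}(\cF(u))$ by a monomial in $x_1$; it simultaneously alters the shadow $B_i$. You should check (it is a short mixed-volume computation using multilinearity and the homogeneity degrees of the resultant) that both sides of the claimed identity change by the same amount under such a shift, so that proving it in the normalized position suffices.
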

The convex hull of $A \cup (\{0\} \times \pi(A))$ is often called the \mydef{shadow} of the convex hull of $A$.
We define $\mydef{\res_{Q,\cA}^{(1)}(\cF(u))}$ to be the polynomial $\res_{Q,A_1,\ldots,A_n}(z-x_1,f_1,\ldots,f_n)|_{z = x_1}$ in $\Z[u][x_1]$. 
\begin{corollary}
As a polynomial in $x_1$, the degree of $\res_{Q,\cA}^{(1)}(\cF(u))$ is $\MV(\cA)$.
\label{cor:degreeHidden}
\end{corollary}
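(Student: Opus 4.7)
The plan is to combine Lemma \ref{lem:hiddenPoisson} with the BKK bound (Theorem \ref{thm:BKK}). Lemma \ref{lem:hiddenPoisson} asserts, up to a constant, the factorization
$$\res_{\pi(\cA)}(\cF(u)) \;=\; x_1^{d}\,\res_{Q,\cA}^{(1)}(\cF(u))$$
in $\Z[u_{i,\alpha}][x_1]$, and specializes, for generic $\cF \in \C^{\cA}$, to
$$\res_{\pi(\cA)}(\cF) \;=\; x_1^{d}\prod_{s \in \V^\times(\cF)}(x_1 - s_1).$$
Cancelling the factor $x_1^d$ in both expressions (formally: working modulo the zero divisor structure of $\Z[u][x_1]$, i.e.\ dividing both sides by $x_1^d$, which is a nonzerodivisor), I obtain that for generic $\cF$,
$$\res_{Q,\cA}^{(1)}(\cF) \;=\; \prod_{s \in \V^\times(\cF)}(x_1 - s_1).$$

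The right-hand side has degree $|\V^\times(\cF)|$ in $x_1$. Since $\cF$ is generic and $\MV(\cA)>0$, Theorem \ref{thm:BKK} tells us $\cF$ is Bernstein-generic, hence $|\V^\times(\cF)| = \MV(\cA)$. Therefore the specialization of $\res_{Q,\cA}^{(1)}(\cF(u))$ at generic $\cF$ has degree exactly $\MV(\cA)$ in $x_1$.

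It remains to upgrade this from a generic specialization to the degree of the polynomial $\res_{Q,\cA}^{(1)}(\cF(u)) \in \Z[u][x_1]$ itself. Write $\res_{Q,\cA}^{(1)}(\cF(u)) = \sum_{k=0}^{D} P_k(u)\,x_1^k$ with $P_D(u) \not\equiv 0$. Specialization at $\cF$ can only decrease the degree in $x_1$, with equality on the Zariski-open complement of $\{P_D = 0\}$. Since the generic specialization attains degree $\MV(\cA)$, we must have $D \geq \MV(\cA)$; and since for generic $\cF$ the degree \emph{equals} $\MV(\cA)$, we cannot have $D > \MV(\cA)$ either (otherwise $P_D(u)$ would vanish on a Zariski-dense open set, forcing $P_D \equiv 0$). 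Thus $D = \MV(\cA)$.

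The only subtle point — the main obstacle, if anything — is the bookkeeping of the ``up to a constant'' factor and the nonzerodivisor cancellation of $x_1^d$, but these are routine once Lemma \ref{lem:hiddenPoisson} is available; the substantive input is BKK applied to the product formula in that lemma.
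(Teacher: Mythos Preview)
Your proof is correct. The paper states this corollary without proof, leaving it as an immediate consequence of Lemma~\ref{lem:hiddenPoisson}; your argument---cancelling $x_1^d$ between the two assertions of that lemma to obtain the product formula for generic $\cF$, applying BKK, and then passing from the degree of a generic specialization back to the degree in $\Z[u][x_1]$---is exactly the natural way to fill in this step and matches what the paper subsequently uses in the proof of Lemma~\ref{lem:easytrace}.
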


\begin{lemma}
\label{lem:easytrace}
Let $\cA=(A_1,\ldots,A_n)$ be a square collection of supports with $\MV(\cA)>0$. Let $\cF(u)$ be the universal polynomial system over $\cA$. We write the resultant $\res_{Q,\cA}^{(1)}(\cF(u))$ as $$\res_{Q,\cA}^{(1)}(\cF(u)) = q_{\MV(\cA)}(u)x_1^{\MV(\cA)} + q_{\MV(\cA)-1}(u)x_1^{\MV(\cA)-1}+\cdots + q_1(u) x_1 + q_0(u).$$Then
$$\Sigma_1(\V^{\times}(\cF(u))) = -\frac{q_{\MV(\cA)-1}(u)}{q_{\MV(\cA)}(u)}.$$
\end{lemma}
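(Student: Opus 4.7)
The plan is to combine the two assertions of Lemma \ref{lem:hiddenPoisson} to express $\res_{Q,\cA}^{(1)}(\cF)$ as a constant (in $x_1$) multiple of $\prod_{s\in\V^\times(\cF)}(x_1-s_1)$ for generic $\cF$, and then read off the claimed formula from Vieta's relations.

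First, by Lemma \ref{lem:hiddenPoisson}, there is a constant $\kappa\in\C^\times$ and an integer $d\geq 0$ such that, as elements of $\Z[u][x_1]$,
$$\res_{\pi(\cA)}(\cF(u)) \;=\; \kappa\, x_1^{d}\,\res_{Q,\cA}^{(1)}(\cF(u)),$$
and such that, for $\cF\in\C^\cA$ generic,
$$\res_{\pi(\cA)}(\cF) \;=\; x_1^{d}\prod_{s\in\V^\times(\cF)}(x_1-s_1).$$
Dividing these two equalities by $x_1^{d}$, I get that for generic $\cF$,
$$\res_{Q,\cA}^{(1)}(\cF) \;=\; \kappa^{-1}\prod_{s\in\V^\times(\cF)}(x_1-s_1).$$
By Corollary \ref{cor:degreeHidden}, the left-hand side has degree exactly $\MV(\cA)$ in $x_1$, which matches the degree of the right-hand side since $|\V^\times(\cF)|=\MV(\cA)$ for Bernstein-generic $\cF$ (Theorem \ref{thm:BKK}). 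In particular, the leading coefficient $q_{\MV(\cA)}(u)$ is not identically zero.

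Next, I expand the product on the right using Vieta's formulas. The coefficient of $x_1^{\MV(\cA)}$ on the right is $\kappa^{-1}$, so $q_{\MV(\cA)}(\cF) = \kappa^{-1}$, while the coefficient of $x_1^{\MV(\cA)-1}$ is $-\kappa^{-1}\sum_{s\in\V^\times(\cF)}s_1 = -\kappa^{-1}\Sigma_1(\V^\times(\cF))$. Equating with $q_{\MV(\cA)-1}(\cF)$ yields
$$\Sigma_1(\V^\times(\cF)) \;=\; -\frac{q_{\MV(\cA)-1}(\cF)}{q_{\MV(\cA)}(\cF)}$$
for all $\cF$ in a Zariski-dense subset of $\C^{\cA}$.

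Finally, since $q_{\MV(\cA)-1}(u)$ and $q_{\MV(\cA)}(u)$ are polynomials in $u$ and the identity holds on a Zariski-dense subset, it holds as an identity of rational functions in $u$, which is precisely the statement for the universal system $\cF(u)$. The main subtlety here is keeping track of the hidden factor $x_1^{d}$ from Lemma \ref{lem:hiddenPoisson} and verifying that $q_{\MV(\cA)}(u)\not\equiv 0$ so that the quotient is well-defined; both are handled by Corollary \ref{cor:degreeHidden} together with the generic identification above.
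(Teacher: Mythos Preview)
Your proof is correct and follows essentially the same approach as the paper's: invoke Lemma~\ref{lem:hiddenPoisson} and Corollary~\ref{cor:degreeHidden} to identify $\res_{Q,\cA}^{(1)}(\cF)$, up to a nonzero constant, with $\prod_{s\in\V^\times(\cF)}(x_1-s_1)$ for generic $\cF$, read off the trace via Vieta, and then pass to the universal system by Zariski density. The only minor quibble is that your deduction $q_{\MV(\cA)}(\cF)=\kappa^{-1}$ presumes the second identity in Lemma~\ref{lem:hiddenPoisson} holds on the nose rather than up to its own (possibly $\cF$-dependent) constant; but since this constant cancels in the ratio $q_{\MV(\cA)-1}/q_{\MV(\cA)}$, the argument goes through unchanged.
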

\begin{proof}
By Lemma \ref{lem:hiddenPoisson} and Corollary \ref{cor:degreeHidden}, we have that $\res_{Q,\cA}^{(1)}(\cF) = \prod_{s \in \V^{\times}(\cF)} (x_1-s_1)$
for generic $\cF = \cF(c) \in \C^{\cA}$, up to a constant. Since $\cF$ is generic, $q_{\MV(\cA)}(c) \neq 0$ and so the sum of the $x_1$ coordinates of points in $\V^{\times}(\cF)$ is $$\Sigma_1(\V^{\times}(\cF))=-\frac{q_{\MV(\cA)-1}(c)}{q_{\MV(\cA)}(c)}. $$ Since this equality holds for all generic $\cF \in \C^{\cA}$, the equality also holds for the universal polynomial system.
\end{proof}

\begin{remark}
D'Andrea and Jeronimo make the quotient in Lemma \ref{lem:easytrace} explicit. For a square collection of supports $\cA$ with positive mixed volume,  a specialization of formula \cite[Theorem 2.3]{DAndreaJeronimo:2008} gives
$$\Sigma_1(\V^\times(\mathcal F(u))) =  d_{Q,A_1,\ldots,A_n}\frac{\frac{\partial \elim_{Q,A_1,\ldots,A_n}}{\partial u_{0,e_1}}(1,f_1,\dots,f_n)}{\elim_{Q,A_1,\ldots,A_n}(1,f_1,\dots,f_n)} = \frac{\frac{\partial \res_{Q,A_1,\ldots,A_n}}{\partial u_{0,e_1}}(1,f_1,\ldots,f_n)}{\res_{Q,A_1,\ldots,A_n}(1,f_1,\ldots,f_n)}$$
where $\cF(u)=(f_0,\ldots,f_n)$ is the universal polynomial system over $(Q,A_1,\ldots,A_n)$.
\end{remark}

To prove the main result of this section, we first state an elementary result on the coefficients in the composition of a monomial with a collection of polynomials.
Let $\mydef{[x^k]f(x)}$ denote the coefficient of $x^k$ in $f(x)$.
\begin{lemma}
\label{lem:CoefficientsInComposition}
Define $g_i(x) = \sum_{j=0}^{d_i} c_{i,j}x^j \in \C[c][x]$ and fix $\gamma=(\gamma_1,\ldots,\gamma_n) \in \Z^n$. The polynomial $$g^\gamma(x) \vcentcolon= g_1(x)^{\gamma_1}\cdots g_n(x)^{\gamma_n}$$ has degree $d=d_1\gamma_1+\cdots+d_n\gamma_n$, and if $\prod c_{i_k,j_k}x^{d-\delta}$ is a term of $g^\gamma(x)$, then $\sum d_{i_k}-j_k =\delta$.

Consequently, if $d_i-j>\delta$, then $c_{i,j}$ does not appear in $[x^{d-\delta}]g^\gamma(x)$ and each term $\prod c_{i_k,j_k}x^{d-\delta}$ can have at most one $c_{i,j}$ with the property that $d_i-j > \delta/2$.
\end{lemma}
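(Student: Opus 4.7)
The plan is to prove the lemma by expanding the product $g^\gamma(x)$ directly through iterated distribution (equivalently, a multinomial expansion applied to each $g_i(x)^{\gamma_i}$), and then reading off the conclusions from the elementary identity that degree loss in a product equals the sum of the individual degree losses in each factor.

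First I would view $g^\gamma(x)$ as a product of $\sum_i \gamma_i$ polynomials, with $\gamma_i$ copies of $g_i$ for each $i$. By full distribution, every monomial appearing in the expansion is obtained by selecting, from each copy, a single term $c_{i,j}x^j$ of $g_i$ and multiplying the selections together. Indexing the selections by $k = 1,\ldots,\sum_i \gamma_i$ with $i_k$ recording which $g_{i_k}$ the $k$-th factor came from and $j_k$ its chosen exponent, each such selection produces a monomial
\[
\Bigl(\prod_k c_{i_k,j_k}\Bigr)\, x^{\sum_k j_k}.
\]
The maximum possible value of $\sum_k j_k$ is $\sum_k d_{i_k} = \sum_i \gamma_i d_i = d$, attained by selecting the leading term of every factor; this establishes the degree claim.

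Next, if the resulting monomial contributes to $[x^{d-\delta}]g^\gamma(x)$, then $\sum_k j_k = d - \delta$, hence
\[
\sum_k (d_{i_k} - j_k) \;=\; d - (d-\delta) \;=\; \delta.
\]
Since each summand $d_{i_k}-j_k$ is a nonnegative integer, this identity gives the central claim of the lemma, and the two consequences follow immediately: if a single factor $c_{i,j}$ in the selection satisfied $d_i - j > \delta$, the sum would already exceed $\delta$, contradicting the identity; similarly, two factors each contributing more than $\delta/2$ would sum to more than $\delta$, so at most one position $k$ in any such selection can satisfy $d_{i_k} - j_{k} > \delta/2$.

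I do not anticipate a genuine obstacle here, since the lemma is essentially a bookkeeping statement about multinomial expansions. The only point requiring care is to distinguish between a variable $c_{i,j}$ and the \emph{position} in the expansion from which it was selected: the same $c_{i,j}$ may arise from several of the $\gamma_i$ copies of $g_i$ within a single monomial, so the phrase ``at most one $c_{i,j}$ with the property $d_i - j > \delta/2$'' is naturally interpreted as ``at most one position $k$ with $d_{i_k}-j_k > \delta/2$,'' which is exactly what the identity $\sum_k (d_{i_k}-j_k) = \delta$ forces.
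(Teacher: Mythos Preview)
Your proof is correct. The paper does not actually prove this lemma: it introduces it as ``an elementary result on the coefficients in the composition of a monomial with a collection of polynomials'' and moves on without argument, so your direct multinomial expansion is exactly the kind of justification the authors are tacitly invoking.
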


Before stating the main result of this section, we establish some notation. Fix a square collection of supports $\cA$ in $\Z^n$. For $\beta \in \pi(A_i)$, we let $(\mydef{m_{i,\beta}},\beta)$ be the integer point in the Newton polytope of $A_i$ with maximal first coordinate and $\mydef{\varepsilon_{i,\beta}} \in [0,1)$ be the distance in the direction of $e_1$ from $(\mydef{m_{i,\beta}},\beta)$ to the boundary of $A_i$. Note that there are finitely many $\varepsilon_{i,\beta}$, all of which are rational.  We define $\mydef{\lambda}$ to be the least common multiple of their denominators. Using $\mydef{\odot}$ to denote coordinate-wise multiplication, we define $\mydef{\widehat{\cA}}\vcentcolon=\Z^n\cap\conv(\lambda e_1 \odot \cA)\cap\pi^{-1}(\pi(\cA))$, so that $\lambda e_1 \odot \cA$ is $\cA$ scaled by $\lambda$ in the direction of $e_1$.  Note that $\pi(\cA)=\pi(\widehat{\cA})$ and all $\widehat{\varepsilon}_{i,\beta}$ for $\widehat{\cA}$ are zero, see Figure \ref{fig:StretchNotation}. 

\begin{figure}
\begin{tikzpicture}[scale=.7]

\begin{scope}[shift={(0.5,0)}]
\filldraw[fill=gray, fill opacity=0.3] (0,0)  -- (1,2) -- (0,2) -- cycle;
\foreach \i in {0,...,2} {\foreach \j in {0,...,3} {\filldraw[color=gray] (\i,\j) circle (.03);}}
\filldraw[fill=black] (0,0) circle (.15);
\filldraw[fill=black] (0,1) circle (.15);
\filldraw[fill=black] (0,2) circle (.15);
\filldraw[fill=black] (1,2) circle (.15);
\end{scope}
\begin{scope}[shift={(0.5,-5)}]
\filldraw[fill=gray, fill opacity=0.3] (0,0) -- (1,3) -- (0,2) -- cycle;
\foreach \i in {0,...,2} {\foreach \j in {0,...,3} {\filldraw[color=gray] (\i,\j) circle (.03);}}
\filldraw[fill=black] (0,0) circle (.15);
\filldraw[fill=black] (1,3) circle (.15);
\filldraw[fill=black] (0,2) circle (.15);
\end{scope}
\begin{scope}[shift={(5.5,0)}]
\filldraw[fill=gray, fill opacity=0.3] (0,0) -- (3,2) -- (0,2) -- (0,2) -- cycle;
\foreach \i in {0,...,7} {\foreach \j in {0,...,3} {\filldraw[color=gray] (0.5*\i,\j) circle (.03);}}
\foreach \i in {0,...,3} {\filldraw[fill=white] (0.5*\i,1) circle (.15);}
\foreach \i in {0,...,6} {\filldraw[fill=white] (0.5*\i,2) circle (.15);}
\foreach \i in {0,...,0} {\filldraw[fill=black] (0.5*3*\i,1) circle (.15);}
\foreach \i in {0,...,1} {\filldraw[fill=black] (0.5*3*2*\i,2) circle (.15);}
\filldraw[fill=black] (0,0) circle (.15);
\end{scope}
\begin{scope}[shift={(5.5,-5)}]
\filldraw[fill=gray, fill opacity=0.3] (0,0) -- (3,3) -- (0,2) -- cycle;
\foreach \i in {0,...,7} {\foreach \j in {0,...,3} {\filldraw[color=gray] (0.5*\i,\j) circle (.03);}}
\filldraw[fill=black] (0,0) circle (.15);
\foreach \i in {0,...,4} {\filldraw[fill=white] (0.5*\i,2) circle (.15);}
\foreach \i in {0,...,0} {\filldraw[fill=black] (0.5*\i,2) circle (.15);}
\filldraw[fill=black] (3,3) circle (.15);
\end{scope}
\node at (1,-0.5) {$A_1$};
\node at (1,-5.5) {$A_2$};
\node at (7,-0.5) {$\widehat{A}_1$};
\node at (7,-5.5) {$\widehat{A}_2$};
\node at (-3,2) {$\varepsilon_{1,(2)} = 0,\,\,\,\, m_{1,(2)} = 1$};
\node at (-3,1) {$\varepsilon_{1,(1)} = 1/2, m_{1,(1)} = 0$};
\node at (-3,0) {$\varepsilon_{1,(0)} = 0,\,\,\,\, m_{1,(0)} = 0$};
\node at (-3,-2) {$\varepsilon_{2,(3)} = 0,\,\,\,\, m_{2,(3)} = 1$};
\node at (-3,-3) {$\varepsilon_{2,(2)} = 2/3, m_{2,(2)} = 0$};
\node at (-3,-5) {$\varepsilon_{2,(0)} = 0,\,\,\,\, m_{2,(0)} = 0$};
\end{tikzpicture}

\caption{Support $\cA=(A_1,A_2)$ and $\widehat \cA$ along with a depiction of $\varepsilon_{i,\beta}$ and $m_{i,\beta}$ for $\cA$.}
\label{fig:StretchNotation}
\end{figure}

\begin{theorem}\label{thm:combinatorialclassification}
Let $\cA=(A_1,\dots,A_n)$ be a collection of supports in $\Z^n$ such that $\MV(\cA)>0$. The support of $q_{\MV(\cA)-\delta}(u)$ is contained in $\offset(\cA,\delta)$. Moreover, the function $u \mapsto q_{\MV(\cA)-\delta}(u)$ is an affine linear function of the coefficients indexed by points in $\cA\backslash \offset(\cA,\delta/2)$.
\end{theorem}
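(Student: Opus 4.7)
The plan is to extract $q_{\MV(\cA)-\delta}(u)$ as a specific coefficient of the hidden variable resultant $\res_{\pi(\cA)}(\cF(u))$ via Lemma~\ref{lem:hiddenPoisson} and Corollary~\ref{cor:degreeHidden}, viewing this resultant as the specialization of the universal sparse resultant $\res_{\pi(\cA)}(\mathcal G(v))$ under $v_{i,\beta} \mapsto h_{i,\beta}(x_1) = \sum_{k:(k,\beta)\in A_i} u_{i,(k,\beta)} x_1^k$. Each monomial $\prod_j v_{i_j,\beta_j}^{\gamma_j}$ of the universal resultant becomes $\prod_j h_{i_j,\beta_j}(x_1)^{\gamma_j}$, whose $x_1$-degree is at most $\sum_j \gamma_j m_{i_j,\beta_j}$. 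This places us in exactly the setting of Lemma~\ref{lem:CoefficientsInComposition}.

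I would first establish the theorem under the simplifying assumption that every $\varepsilon_{i,\beta}=0$, a situation that in particular covers the rescaled collection $\widehat{\cA}$. Here the condition $(k,\beta) \in \offset(A_i,\delta)$ reduces to $k \geq m_{i,\beta}-\delta$, and Lemma~\ref{lem:CoefficientsInComposition} applied monomial-by-monomial and then summed yields both claims: its first assertion shows that only $u_{i,(k,\beta)}$ with $m_{i,\beta}-k \leq \delta$ can appear in $q_{\MV(\cA)-\delta}(u)$, and its second assertion shows that at most one such coefficient with $m_{i,\beta}-k > \delta/2$ appears in each monomial, yielding the affine linearity.

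For general $\cA$, I plan to transfer this special case via the rescaled collection $\widehat{\cA}$, which satisfies $\widehat\varepsilon_{i,\beta}=0$ and $\MV(\widehat{\cA})=\lambda\MV(\cA)$. The key auxiliary identity I would establish is that $\res_{Q,\widehat\cA}^{(1)}(\widehat{\cF}(w))$, after specializing $w_{i,\lambda e_1 \odot \alpha} = u_{i,\alpha}$ for $\alpha \in A_i$ and $w_{i,\gamma}=0$ otherwise, equals a nonzero constant times $\res_{Q,\cA}^{(1)}(\cF(u))|_{x_1 \mapsto x_1^\lambda}$. I would justify this via the product form in Lemma~\ref{lem:hiddenPoisson}: for generic $u$, the zeros of the specialized $\widehat{\cF}$ are precisely $(\zeta s_1^{1/\lambda},s_2,\dots,s_n)$ as $s$ ranges over $\V^\times(\cF(u))$ and $\zeta$ over $\lambda$-th roots of unity, and $\prod_{\zeta^\lambda=1}(x_1 - \zeta s_1^{1/\lambda}) = x_1^\lambda - s_1$; the $x_1$-degrees $\MV(\widehat{\cA})$ and $\lambda \MV(\cA)$ agree, which pins down the proportionality constant and eliminates any stray factor of $x_1$. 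Under the identification $\alpha \leftrightarrow \lambda e_1 \odot \alpha$, one checks directly from the definition that $\alpha \in \offset(A_i,\delta)$ if and only if $\lambda e_1 \odot \alpha \in \offset(\widehat A_i,\lambda\delta)$, so the conclusion of the special case applied to $\widehat{\cA}$ with parameter $\lambda\delta$ transports to the desired conclusion for $\cA$ with parameter $\delta$, for both the support containment and the affine linearity.

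The main obstacle is verifying the specialization-and-scaling identity relating the resultants of $\widehat{\cA}$ and $\cA$. The specialization is non-generic in $\widehat{\cA}$ (many $w_{i,\gamma}$ are forced to zero), so the generic product form of Lemma~\ref{lem:hiddenPoisson} does not immediately apply to the specialized system. I would address this by first checking the identity as polynomials in $u$ on the Zariski-open subset where $\cF(u)$ is Bernstein-generic, on which the zeros of the specialized $\widehat{\cF}$ admit the explicit description above and both sides of the identity have compatible product forms; I would then use the fact that both sides lie in $\Z[u][x_1]$ to extend the identity everywhere. Once this relation is in hand, the reduction of the general case to the $\varepsilon\equiv 0$ case is purely combinatorial.
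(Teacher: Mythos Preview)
Your strategy matches the paper's: rescale to $\widehat{\cA}$ so that every $\varepsilon_{i,\beta}=0$, apply Lemma~\ref{lem:CoefficientsInComposition} there, and transfer back. The organization differs slightly (the paper does not isolate an ``$\varepsilon\equiv 0$'' special case but works directly with $\widehat R(x_1)$ and pulls the conclusion back to $R(x_1)$), but the substance is the same.

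Where you diverge is in how you establish the link between $\cA$ and $\widehat{\cA}$, and here the paper's route is materially simpler. You propose to prove a specialization-and-scaling identity between $\res_{Q,\widehat{\cA}}^{(1)}$ and $\res_{Q,\cA}^{(1)}$ via the product formula of Lemma~\ref{lem:hiddenPoisson}, and you correctly flag this as the main obstacle (the specialization is nongeneric in $\C^{\widehat\cA}$, so you must argue that it is nonetheless Bernstein-generic, and then extend the identity from an open set). The paper sidesteps all of this by observing that $\pi(\cA)=\pi(\widehat{\cA})$, so $\res_{\pi(\cA)}(\cG(v))$ and $\res_{\pi(\widehat{\cA})}(\cG(v))$ are literally the \emph{same} polynomial in the variables $v_{i,\beta}$. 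The two hidden-variable resultants $R(x_1)$ and $\widehat R(x_1)$ are thus obtained by substituting $h_{i,\beta}(x_1)$, respectively $\widehat h_{i,\beta}(x_1)$, into a single polynomial. Since $h_{i,\beta}(x_1^\lambda)$ is the specialization of $\widehat h_{i,\beta}(x_1)$ under $\widehat c_{i,(\lambda j,\beta)}\mapsto c_{i,(j,\beta)}$ (other $\widehat c$'s sent to zero), the relation $R(x_1^\lambda)=\widehat R(x_1)\big|_{\text{specialize}}$ is a purely formal substitution identity in $\Z[u][x_1]$ and needs no product formula, no genericity, and no continuity argument. Your approach works, but you are reconstructing a consequence of this observation by a harder path.
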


\begin{proof}
We characterize which monomials in $\{u_{i,\alpha}\}_{\alpha \in A_i}$ may appear in the support of the coefficient 
$q_{\MV(\cA)-\delta}(u)$ of $x_1^{\MV(\cA)-\delta}$ in $\res^{(1)}_{Q,\cA}(\cF(u)) \in \mathbb{Z}[u][x_1]$.
Let $\mydef{R(x_1)} = \res_{\pi(\cA)}(\cG(h(x_1)))$ be the polynomial in $\Z[u][x_1]$ resulting from the substitution $v_{i,\beta} \mapsto h_{i,\beta}(x_1)$ in Equation \eqref{eq:hiddenVariableEquality}. Let $\mydef{d}$ be the degree of $R(x_1)$ in $x_1$.

Consider $\widehat \cA$ as defined prior to the statement of this lemma. We let $\mydef{\widehat{h_{i,\beta}}(x_1)}$, and $\mydef{\widehat{R}(x_1)}$ be the analogues of $h_{i,\beta}(x_1)$ and $R(x_1)$ for $\widehat \cA$, and $\widehat{d}$ the degree of $\widehat R(x_1)$. We note that $\res_{\pi(\cA)}(\cG(v))=\res_{\pi(\widehat{\cA})}(\cG(v))$.  Since $\mathcal S(\widehat \cA) = \lambda e_1 \odot \mathcal S(\cA)$ where $\mathcal S$ denotes the corresponding shadow polytope, we have that $\widehat{d}=\lambda d$.

Writing $h_{i,\beta}(x_1) = \sum_{j=0}^{m_{i,\beta}} c_{i,(j,\beta)}x_1^j$, we observe that $c_{i,(j,\beta)}$ appears in $[x_1^{d-\delta}]R(x_1)$ only if $\widehat{c}_{i,(\lambda j,\beta)}$ appears in $[x_1^{\lambda(d-\delta)}]\widehat R(x_1)$. Moreover, $[x_1^{\lambda(d-\delta)}]\widehat R(x_1)$ is the sum of $[x_1^{\lambda(d-\delta)}]\prod (\widehat h_{i,\beta}(x_1))^{\gamma_{i,\beta}}$ over all $\gamma$ indexing monomials $v^\gamma= \prod v_{i,\beta}^{\gamma_{i,\beta}}$ in $\res_{\pi(\cA)}(\cG(v))$. Since $\deg_{x_1}(\widehat h_{i,\beta}(x_1)) = \lambda(m_{i,\beta}+\varepsilon_{i,\beta})$, Lemma~\ref{lem:CoefficientsInComposition} implies that $\widehat c_{i,(\lambda j, \beta)}$ appears in $[x_1^{\lambda(d-\delta)}]$ only if 
$$\lambda(m_{i,\beta}+\varepsilon_{i,\beta}) - \lambda j \leq \widehat{d}-\lambda(d-\delta)=\lambda \delta.$$
Dividing through by $\lambda$ gives the necessary condition that
$$m_{i,\beta}+\varepsilon_{i,\beta} - j \leq \delta.$$
We observe that $m_{i,\beta}+\varepsilon_{i,\beta} - j$ is the distance in the first coordinate from the point $(j,\beta)$ to the boundary of the Newton polytope of $A_i$. Hence, $[x_1^{d-\delta}]R(x_1)$ may only involve coefficients $u_{i,\alpha}$ where the distance (in the direction of the first coordinate) from $\alpha$ to the boundary of the Newton polytope of $A_i$ is at most $\delta$, that is, $\offset(\cA,\delta)$. Since $R(x_1) = x_1^k\res_{Q,\cA}^{(1)}(\cF(u))$, we have that $[x_1^{d-\delta}]R(x_1) = [x_1^{\MV(\cA)-\delta}]\res_{Q,\cA}^{(1)}(\cF(u)) = q_{\MV(\cA)-\delta}(u)$.

 Moreover, each term of $[x_1^{d-\delta}]R(x_1)]$ can involve at most one coefficient $u_{i,\alpha}$ having this distance to the boundary greater than $\delta/2$. Hence, the function $u \mapsto [x_1^{d-\delta}]R(x_1)$ is an affine linear function of the coefficients indexed by $\cA\backslash \offset(\cA,\delta/2)$, completing the proof.
\end{proof}

\section{Sparse monodromy}
\label{sec:Monodromy}
There has been recent progress in the study of monodromy groups of sparse polynomial systems \cite{Esterov, EsterovLang,SottileYahl}. Esterov showed that there are (essentially) two properties of $\cA$ which can cause $G(\pi_{\cA})$ to fail to be the full symmetric group. These properties explain the need for restrictions on the input to the sparse trace tests, as we detail in Section \ref{sec:sec:LacunaryTriangular}. In Section \ref{sec:sec:monodromyproof}, we extend Esterov's  result to restrictions of the branched cover $\pi_{\cA}$, which completes the proof of correctness for the sparse trace tests.

\subsection{Lacunary and triangular supports}
\label{sec:sec:LacunaryTriangular} Throughout this section, we assume $\cA$ is a square set of supports in $\mathbb{Z}^n$. 
The following example illustrates why the condition $L[\cA] = \Z^n$ is necessary for the sparse trace tests. 

\begin{example}
\label{ex:LacunaryCounterExample}
Let $\cF(x_1,x_2)$ be any Bernstein-generic sparse polynomial system supported on $\cA$, where $A_1=\{(0,0),(1,0),(1,2)\}$ and $A_2=\{(0,0),(1,0),(0,2),(1,2)\}$, see Figure \ref{fig:LacunaryCounterExample}. Since every power of $x_2$ is even, $\cF$ may be written in the coordinates $(y_1,y_2)=(x_1,x_2^2)$ for some system $\cG(y_1,y_2)$. Thus, the zeros of $\cF$ in the torus have the following form:
$$\V^\times(\cF) = \{(a_1,b_1),(a_1,-b_1),(a_2,b_2),(a_2,-b_2)\}.$$
The map $(x_1,x_2) \stackrel{\phi}{\mapsto} (x_1,x_2^2)$ is a two-to-one map from $\V^\times(\cF)$ to $\V^\times(\cG)$. 

There are two types of proper nontrivial subsets $S$ of $\V^\times(\cF)$ for which the sparse trace tests erroneously succeed. When 
$S$ consists of a single fibre of $\phi$, the trace $\Sigma_2(S)$ is zero. When $S$ consists of a single point in each fibre of $\phi$ the trace $\Sigma_1(S)$  is half of $\Sigma_1(\V^\times(\cF))$. In both cases, if $\Sigma_i(\V^\times(\cF))$ 
moves affine linearly during the sparse trace test, then so does $\Sigma_i(S)$. 

\begin{figure}[!htbp]
\begin{tikzpicture}[scale=.7]
\filldraw[fill=gray, fill opacity=0.3] (0,0) -- (1,0) -- (1,2) -- cycle;
\foreach \i in {0,...,2} {\foreach \j in {0,...,3} {\filldraw[color=gray] (\i,\j) circle (.03);}}
\filldraw[fill=white] (0,0) circle (.15);
\filldraw[fill=white] (1,0) circle (.15);
\filldraw[fill=white] (1,2) circle (.15);
\begin{scope}[shift={(3.5,0)}]
\filldraw[fill=gray, fill opacity=0.3] (0,0) -- (1,0) -- (1,2) -- (0,2) -- cycle;
\foreach \i in {0,...,2} {\foreach \j in {0,...,3} {\filldraw[color=gray] (\i,\j) circle (.03);}}
\filldraw[fill=white] (0,0) circle (.15);
\filldraw[fill=white] (1,0) circle (.15);
\filldraw[fill=white] (1,2) circle (.15);
\filldraw[fill=white] (0,2) circle (.15);
\end{scope}
\begin{scope}[shift={(9,0)}]
\filldraw[fill=gray, fill opacity=0.3] (0,0) -- (1,0) -- (1,1) -- cycle;
\foreach \i in {0,...,2} {\foreach \j in {0,...,3} {\filldraw[color=gray] (\i,\j) circle (.03);}}
\filldraw[fill=white] (0,0) circle (.15);
\filldraw[fill=white] (1,0) circle (.15);
\filldraw[fill=white] (1,1) circle (.15);
\end{scope}
\begin{scope}[shift={(12.5,0)}]
\filldraw[fill=gray, fill opacity=0.3] (0,0) -- (1,0) -- (1,1) -- (0,1) -- cycle;
\foreach \i in {0,...,2} {\foreach \j in {0,...,3} {\filldraw[color=gray] (\i,\j) circle (.03);}}
\filldraw[fill=white] (0,0) circle (.15);
\filldraw[fill=white] (1,0) circle (.15);
\filldraw[fill=white] (1,1) circle (.15);
\filldraw[fill=white] (0,1) circle (.15);
\end{scope}
\node at (0.5,-0.5) {$A_1$};
\node at (4,-0.5) {$A_2$};
\node at (9.5,-0.5) {$B_1$};
\node at (13,-0.5) {$B_2$};
\end{tikzpicture}
\caption{Lacunary support $\cA=(A_1,A_2)$ and a lacunary reduction $\cB=(B_1,B_2)$ of $\cA$.}\label{fig:LacunaryCounterExample}
\end{figure}

\end{example}

We now discuss the general case of the structure exhibited in Example \ref{ex:LacunaryCounterExample}.
\begin{definition}
We say a collection of supports $\cA$ is \mydef{lacunary} if $[\Z^n:L[\cA]]>1$. Similarly, any $\cF$ supported on lacunary support is called a \mydef{lacunary} system.  
\end{definition}

For any lacunary support $\cA$ in $\Z^n$, there exists a $\Z$-linear map ${\color{blue}{\Phi}}:\Z^n \to \Z^n$ and nonlacunary support $\cB$ such that $\Phi(\cB)=\cA$. We call such a $\cB$ a \mydef{lacunary reduction} of $\cA$.  Additionally, the sparse polynomial system $\cG$ in $\C^{\cB}$ with the same coefficient vector as $\cF \in \C^{\cA}$ is called a \mydef{lacunary reduction} of $\cF$ with respect to $\Phi$. The lacunary reduction has the property that
$$
\cG(y_1,\ldots,y_n) = \cG(\phi(x))=\cF(x)
$$
where $\phi:(\C^\times)^n \to (\C^\times)^n$ is the monomial map $\mydef{\phi(x_i)} = x^{\Phi(e_i)} \vcentcolon= x_1^{\Phi(e_i)_1}\cdots x_n^{\Phi(e_i)_n}$. The branched cover $\pi_{\cA}: X_{\cA} \to \C^{\cA}$ decomposes as $\pi_{\cA} = \pi_{\cB} \circ (\phi \times \id)$, where $\id(\cF) = \cG \in \C^{\cB}$.  The fibre of $\phi \times \id$ over a point $(y,\cG) \in X_{\cB}$ is 
$$(\phi \times \id)^{-1}(y,\cG)=\{(x,\cF)\,\, \mid \,\, \phi(x) = y, \,\, \cF=\cG\}.$$
This fibre has cardinality $\det(\Phi) = [\Z^n:L[\cA]]$ and is identified with the solutions to the binomial system $\phi(x)=y$. 

\begin{theorem}
\label{thm:lacunaryTraces}
Suppose $\cA$ is lacunary. 
\begin{itemize}
\item If $e_1 \notin L[\cA]$, then $\Sigma_1(\V^\times(\cF)) = 0$.
\item If $e_1 \in L[\cA]$, then $\Sigma_1(\V^\times(\cF)) = [\Z^n:L[\cA]] \Sigma_1(\V^\times(\cG))$ where $\Phi(\cB)=\cA$ is a lacunary reduction of $\cA$ satisfying $\Phi(e_1)=e_1$ and $\cG \in \C^{\cB}$ is the corresponding lacunary reduction of $\cF$.
\end{itemize}
\end{theorem}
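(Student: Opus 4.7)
The plan is to exploit the decomposition $\V^\times(\cF) = \phi^{-1}(\V^\times(\cG))$, where each fibre of $\phi$ is a coset of the finite abelian group $K := \ker \phi \subseteq (\C^\times)^n$ of order $[\Z^n:L[\cA]]$. First I would write
\[
\Sigma_1(\V^\times(\cF)) \;=\; \sum_{y \in \V^\times(\cG)} \sum_{x \in \phi^{-1}(y)} x_1,
\]
pick any preimage $x_0 = x_0(y) \in \phi^{-1}(y)$, and use the coset structure $\phi^{-1}(y) = x_0 \cdot K$ together with the fact that the first coordinate is multiplicative to factor the inner sum as $(x_0)_1 \bigl(\sum_{k \in K} k_1\bigr)$. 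The proof then reduces to evaluating the character sum $\sum_{k\in K} k_1$ and relating $(x_0)_1$ to $y_1$.

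Next I would analyze $\chi: K \to \C^\times,\; k \mapsto k_1$, which is a character of the finite abelian group $K$. The key observation is that $\chi$ is trivial if and only if the function $x \mapsto x_1$ factors through $\phi$, i.e.\ if and only if there exists $\alpha \in \Z^n$ with $x_1 = \phi(x)^\alpha = x^{\Phi(\alpha)}$, i.e.\ $e_1 \in \Phi(\Z^n) = L[\cA]$. Alternatively, under the natural identification $K^{*} \cong \Z^n/L[\cA]$ coming from Pontryagin duality for algebraic tori, $\chi$ corresponds to the class of $e_1$. Standard character orthogonality then gives $\sum_{k \in K}\chi(k) = |K|$ if $\chi$ is trivial and $0$ otherwise.

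This immediately yields the first bullet: if $e_1 \notin L[\cA]$, then every inner sum vanishes and $\Sigma_1(\V^\times(\cF)) = 0$. For the second bullet I first need a lacunary reduction with $\Phi(e_1) = e_1$. Since $e_1 \in L[\cA]$ and $L[\cA]$ has full rank in $\Z^n$, a short lattice argument (the projection $\Z^n \to \Z^{n-1}$ forgetting the first coordinate has kernel $\Z e_1 \subseteq L[\cA]$, so it sends $L[\cA]$ onto a full-rank sublattice whose basis lifts to $v_2,\ldots,v_n \in L[\cA]$) produces a basis $e_1, v_2, \ldots, v_n$ of $L[\cA]$. The $\Z$-linear map $\Phi$ sending $e_1 \mapsto e_1$ and $e_i \mapsto v_i$ is injective with image $L[\cA]$, so $\cB := \Phi^{-1}(\cA)$ satisfies $L[\cB] = \Z^n$ and provides a valid lacunary reduction. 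With this choice $\phi(x)_1 = x^{\Phi(e_1)} = x_1$, so $(x_0)_1 = y_1$ for every $y$ and every preimage $x_0$. Combining with $\sum_{k\in K} k_1 = |K| = [\Z^n:L[\cA]]$ gives
\[
\Sigma_1(\V^\times(\cF)) = [\Z^n:L[\cA]] \sum_{y \in \V^\times(\cG)} y_1 = [\Z^n:L[\cA]] \, \Sigma_1(\V^\times(\cG)).
\]

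The main obstacle, such as it is, is bookkeeping the torus/character duality cleanly enough to identify $\chi$ with the class of $e_1$ in $\Z^n/L[\cA]$; the lattice construction of $\Phi$ and the orthogonality sum are routine. Once these are in hand, both cases drop out directly. One minor point to flag is that the identity $\V^\times(\cF) = \phi^{-1}(\V^\times(\cG))$ should be read as an equality with multiplicities; for Bernstein-generic $\cF$ all multiplicities are one, and the formula extends to the remaining $\cF$ by continuity of the trace as a rational function of the coefficients.
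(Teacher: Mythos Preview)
Your argument is correct and follows the same overall strategy as the paper: decompose $\V^\times(\cF)$ into fibres of the monomial map $\phi$, each a coset of the finite kernel $K=\ker\phi$, and analyse the contribution of each coset to $\Sigma_1$. For the second bullet your construction of $\Phi$ with $\Phi(e_1)=e_1$ and the conclusion that every fibre has constant first coordinate equal to $y_1$ match the paper's proof essentially verbatim.

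For the first bullet the paper argues more concretely: it selects $\Phi$ with $\Phi(e_1)=ke_1$ for some $k>1$ and asserts that $(\omega_k s_1,s_2,\dots,s_n)$ is again a solution whenever $(s_1,\dots,s_n)$ is, so first coordinates cancel in groups of $k$. Your character-sum formulation is the cleaner and more robust version of this idea. The paper's assertion amounts to $(\omega_k,1,\dots,1)\in K$, i.e.\ $k\mid v_1$ for every $v\in L[\cA]$, which is not guaranteed by $e_1\notin L[\cA]$ alone (take $L[\cA]=\langle(2,0),(1,1)\rangle\subset\Z^2$: here $e_1\notin L[\cA]$ but the first coordinates of $L[\cA]$ are all of $\Z$). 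By summing the character $\chi(k)=k_1$ over the whole kernel and identifying it with the class of $e_1$ in $\Z^n/L[\cA]\cong\widehat{K}$, you get the vanishing directly from orthogonality, independent of any particular choice of $\Phi$. That is exactly the invariant the concrete argument is reaching for, and your treatment makes it watertight.
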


\begin{proof}
If $e_1 \not\in L[\cA]$, then there exists a linear map $\Phi$ and lacunary reduction $\cB$ such that $\Phi(e_1) = ke_1$ for some $k>1$. Hence, if $(s_1,\ldots,s_n)$ is a solution to some generic $\cF \in \C^{\cA}$, then so is $(\omega_k \cdot s_1,s_2,\ldots,s_n)$ for any $k$-th root of unity $\omega_k$. Thus, the sum of the first coordinates of the solutions to $\cF$ is zero.

If $e_1 \in L[\cA]$, then $\Phi$ may be chosen so that $\Phi(e_1) = e_1$. Then the fibre over $\Phi$ of the point $(y,\cG)$  consists of $[\Z^n:L[\cA]]$ points with identical first coordinate $y_1$, and the result follows.
\end{proof}

The following corollary highlights how the application of our sparse trace tests on (invalid) lacunary support is one-sided by identifying nonempty proper subsets $S \subsetneq \V^\times(\cF)$ on which these algorithms return \texttt{pass}.

\begin{corollary}
\label{cor:TestFailsForLacunary}
Suppose $\cA$ is lacunary and $\cF$ is supported on $\cA$.
\begin{itemize}
\item If $e_1 \notin L[\cA]$, then the trace $\Sigma_1$ of a union of fibres over $\phi \times \id$ is zero.
\item If $e_1 \in L[\cA]$, then the trace $\Sigma_1$ of a union of $k$ points in each fibre is $\frac{k}{[\Z^n:L[\cA]]} \Sigma_1(\V^\times(\cF))$.  
\end{itemize}
\end{corollary}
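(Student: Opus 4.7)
The plan is to unpack the fiber structure of the map $\phi \times \id$ using the lacunary reductions constructed in the proof of Theorem \ref{thm:lacunaryTraces}, and then simply sum the first coordinates over the prescribed subsets. Both bullets are essentially bookkeeping on top of what Theorem \ref{thm:lacunaryTraces} already establishes, so I expect the argument to be short.

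For the first bullet ($e_1 \notin L[\cA]$), I would invoke the construction used in the proof of Theorem \ref{thm:lacunaryTraces}: since $e_1 \notin L[\cA]$, we can choose the lacunary reduction so that $\Phi(e_1) = k e_1$ for some integer $k > 1$. A fiber of $\phi \times \id$ over $(y,\cG)$ consists of points $(x,\cF)$ with $\phi(x) = y$ and $\cF$ determined by $\cG$; such $x$'s form a coset of $\ker\phi$ in $(\C^\times)^n$, and in particular the set is closed under the action $x_1 \mapsto \omega_k x_1$ for any $k$-th root of unity $\omega_k$. Partitioning the fiber into orbits of this action, each orbit's first coordinates sum to zero, so $\Sigma_1$ of any union of full fibers vanishes.

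For the second bullet ($e_1 \in L[\cA]$), the proof of Theorem \ref{thm:lacunaryTraces} lets us pick the lacunary reduction with $\Phi(e_1) = e_1$. Then if $\phi(x) = y$, the first coordinate of $x$ is forced to equal $y_1$, so every point in the fiber of $\phi\times\id$ over $(y,\cG)$ has first coordinate $y_1$. Picking $k$ points from each fiber therefore contributes $k y_1$ per fiber, giving a total of $k \cdot \Sigma_1(\V^\times(\cG))$. Rewriting via the identity $\Sigma_1(\V^\times(\cG)) = \Sigma_1(\V^\times(\cF))/[\Z^n:L[\cA]]$ from Theorem \ref{thm:lacunaryTraces} yields the claimed value $\tfrac{k}{[\Z^n:L[\cA]]}\Sigma_1(\V^\times(\cF))$.

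The hard part has already been done inside Theorem \ref{thm:lacunaryTraces}; the only minor subtlety is making sure the selection of $k$ points per fiber is unambiguous (any $k$ points work in the second case because they share a first coordinate, and in the first case the statement is about \emph{full} fibers, so there is no ambiguity). No new input from sparse resultants or monodromy is needed, so I do not anticipate any real obstacle.
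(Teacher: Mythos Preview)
Your approach matches the paper's intent: the corollary is stated without proof and is meant to follow directly from the fiber description in the proof of Theorem~\ref{thm:lacunaryTraces}. Your treatment of the second bullet is correct as written.

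There is, however, a genuine slip in your argument for the first bullet. From $\Phi(e_1)=ke_1$ you conclude that each fiber is closed under $x_1\mapsto\omega_k x_1$, i.e., that $(\omega_k,1,\dots,1)\in\ker\phi$. This would require $\omega_k^{\alpha_1}=1$ for every $\alpha\in L[\cA]$, which is false whenever $L[\cA]$ contains an element whose first coordinate is not divisible by $k$. For a concrete obstruction take $n=2$ and $L[\cA]=\langle(1,1),(0,2)\rangle$: then $e_1\notin L[\cA]$, the only admissible $k$ is $2$, and $\ker\phi=\{(1,1),(-1,-1)\}$, which is \emph{not} invariant under $x_1\mapsto -x_1$. (The same imprecision appears in the paper's proof of Theorem~\ref{thm:lacunaryTraces}, so you inherited it; but when you pass from the full solution set to a single fiber the issue becomes visible.)

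The fix is short and keeps your structure intact. A fiber is a coset $x_0\cdot\ker\phi$, so its first-coordinate sum equals $(x_0)_1\sum_{\zeta\in\ker\phi}\zeta_1$. Identifying $\ker\phi$ with the character group of $\Z^n/L[\cA]$, orthogonality of characters gives $\sum_{\zeta}\zeta_1=\sum_{\chi}\chi(\overline{e_1})=0$ precisely because $e_1\notin L[\cA]$. Equivalently, pick any $\zeta\in\ker\phi$ with $\zeta_1\neq 1$ (such $\zeta$ exists exactly when $e_1\notin L[\cA]$) and partition the fiber into orbits under multiplication by $\zeta$; each orbit's first coordinates are a full set of powers of $\zeta_1$ times a constant and hence sum to zero. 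Either variant repairs the gap without changing the rest of your proof.
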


We now illustrate a second property of $\cA$ which prevents the use of our sparse trace tests. This property never occurs for abundant $\cA$.

\begin{example}
\label{ex:TriangularCounterExample}
Let $\cF(x_1,x_2)=(f_1,f_2)$ be any Bernstein-generic sparse polynomial system supported on $\cA$, where $A_1=\{(0,0), (1,0), (2,0)\}$ and $A_2=\{(0,0),(1,0),(0,1),(2,1),(1,1),(0,2)\}$, see Figure \ref{fig:TriangularCounterExample}. Since the first coordinate of any point in $\V^\times(\cF)$ must be a solution to the square system $\cF_{1}(x_1)$, the zeros of $\V^\times(\cF)$ in the torus have the following form:
$$\V^\times(\cF) = \{(a_1,b_1),(a_1,c_1),(a_2,b_2),(a_2,c_2)\}.$$
The map $(x_1,x_2) \stackrel{\psi}{\mapsto} (x_1)$ is a two-to-one map from $\V^\times(\cF)$ to $\V^\times(\cF_1)$.

If $S$ consists of a single point in each fibre of $\psi$, then $\Sigma_1(S)$ is half of $\Sigma_1(\V^\times(\cF))$. As in the lacunary example, the trace $\Sigma_1(S)$ moves affine linearly whenever $\Sigma_1(\V^\times(\cF))$ does.

\begin{figure}[!htbp]
\begin{tikzpicture}[scale=.7]
\filldraw[fill=gray, fill opacity=0.3] (0,0) -- (1,0) -- (2,0) ;
\foreach \i in {0,...,2} {\foreach \j in {0,...,3} {\filldraw[color=gray] (\i,\j) circle (.03);}}
\filldraw[fill=white] (0,0) circle (.15);
\filldraw[fill=white] (1,0) circle (.15);
\filldraw[fill=white] (2,0) circle (.15);
\begin{scope}[shift={(5,0)}]
\filldraw[fill=gray, fill opacity=0.3] (0,0) -- (1,0) -- (2,1) -- (0,2) -- cycle;
\foreach \i in {0,...,2} {\foreach \j in {0,...,3} {\filldraw[color=gray] (\i,\j) circle (.03);}}
\filldraw[fill=white] (0,0) circle (.15);
\filldraw[fill=white] (1,0) circle (.15);
\filldraw[fill=white] (0,2) circle (.15);
\filldraw[fill=white] (1,1) circle (.15);
\filldraw[fill=white] (2,1) circle (.15);
\filldraw[fill=white] (0,1) circle (.15);
\end{scope}
\end{tikzpicture}
\caption{Triangular support $\cA=(A_1,A_2)$.}\label{fig:TriangularCounterExample}
\end{figure}

\end{example}

\begin{definition}
We say a collection of supports $\cA$ is \mydef{triangular} if there exists a proper subset $I \subsetneq [n]$ such that $\rank(\cA_I) = |I|$. Similarly, any $\cF$ supported on triangular support is called \mydef{triangular}. 
\end{definition}

We note that the condition that $\cB$ is abundant in our sparse trace tests implies that $\cA$ is not triangular.  When $\cA$ is triangular, witnessed by $I \subsetneq [n]$, the mixed volume of $\cA_I$ is defined to be its mixed volume within its affine span. When $1 < \MV(\cA_I) < \MV(\cA)$, we say that $\cA$ is \mydef{strictly triangular}.

Suppose that $\cF'$ is supported on the triangular support $\cA'$, witnessed by the subsystem $\cA'_I$. We consider the map $\Phi':\Z^{|I|} \to \Z^n$ which sends $e_{j_1},\ldots,e_{j_{|I|}}$ to generators of the saturated lattice $\textrm{span}(L[\cA'_I]) \cap \Z^n$, thus identifying $L[\cA'_I]$ with a sublattice of $\Z^{|I|}$. By choosing a complement of $L[\cA'_I]$ in $\Z^n$, we extend the map $\Phi'$ to an invertible change of coordinates $\Phi:\Z^n \to \Z^n$ and define 
$\phi:(\C^\times)^n \to (\C^\times)^n$ to be the corresponding invertible monomial map. 

We define $\cA \vcentcolon= \Phi^{-1}(\cA')$ and have the following isomorphism of incidence varieties:
$$X_{\cA'} \xrightarrow{\phi \times \Phi^{-1}} X_{\cA}.$$ We note that the system $\cF_I = \cF'_I(\phi(x))$ is a polynomial system in the variables $x_{j_1},\ldots,x_{j_{|I|}}$.

When $\cA$ is triangular, the map $\pi_{\cA}:X_{\cA} \to \C^{\cA}$ decomposes as
$$X_{\cA} \to X_{\cA_I} \times \C^{\cA_{I^c}} \to \C^{\cA},$$
where $I^c$ is the complement of $I$ in $[n]$. The first map takes $(x,\cF) \mapsto ((x_I,\cF_I),\cF_{I^c})$, and the second map takes $((x_I,\cF_I),\cF_{I^c}) \mapsto \cF$.

\begin{theorem}
\label{thm:triangularTraces}
Suppose $\cA$ is triangular, witnessed by $I \subsetneq [n]$. If $e_1 \in L[\cA_I]$ then $$\frac{\MV(\cA)}{\MV(\cA_I)}  \Sigma_1(\V^\times(\cF_I)) = \Sigma_1(\V^\times(\cF)),$$
where the mixed volume of $\cA_I$ is taken in $L[\cA_I]$.
\end{theorem}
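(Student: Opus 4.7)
The plan is to leverage the change of coordinates $\Phi$ (and its induced monomial map $\phi$) recalled in the excerpt so that, after passing from $\cF'$ to $\cF$, the subsystem $\cF_I$ becomes a square sparse system in only the variables $x_{j_1},\ldots,x_{j_{|I|}}$. From there, $\V^\times(\cF)$ fibers over $\V^\times(\cF_I)$ with fibers of constant cardinality on which the first coordinate is constant, and the trace identity falls out of summation by fibers.

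First I would unwind the definitions. By construction, $\Phi(e_{j_1}),\ldots,\Phi(e_{j_{|I|}})$ generate the saturation of $L[\cA'_I]$ in $\Z^n$, so $L[\cA_I]$ lies in the $\Z$-span of $e_{j_1},\ldots,e_{j_{|I|}}$. The hypothesis $e_1 \in L[\cA_I]$ thus forces $1 \in \{j_1,\ldots,j_{|I|}\}$; after relabeling assume $j_1 = 1$. Identifying $L[\cA_I]$ with $\Z^{|I|}$ via $e_{j_k} \mapsto e_k$, the subsystem $\cF_I$ is a square sparse system in the $|I|$ variables $x_1 = x_{j_1}, x_{j_2},\ldots,x_{j_{|I|}}$ with support $\cA_I$, and Theorem \ref{thm:BKK} yields $|\V^\times(\cF_I)| = \MV(\cA_I)$ for generic $\cF$.

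Next I would exploit the decomposition $X_{\cA} \to X_{\cA_I} \times \C^{\cA_{I^c}} \to \C^{\cA}$ recorded just before the theorem. For Bernstein-generic $\cF$, the composition has fibers of size $\MV(\cA)$, while the second map has fibers of size $\MV(\cA_I)$, so multiplicativity of degrees of branched covers forces each fiber of the first map to have size $\MV(\cA)/\MV(\cA_I)$. Concretely, the coordinate restriction $x \mapsto x_I = (x_{j_1},\ldots,x_{j_{|I|}})$ yields an $\frac{\MV(\cA)}{\MV(\cA_I)}$-to-one map from $\V^\times(\cF)$ onto $\V^\times(\cF_I)$, and since every point in a fiber has the same $I$-coordinates, such points agree in their first coordinate.

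Grouping the sum $\Sigma_1(\V^\times(\cF))$ according to this fibration yields
\begin{equation*}
\Sigma_1(\V^\times(\cF)) \;=\; \sum_{x_I \in \V^\times(\cF_I)} \frac{\MV(\cA)}{\MV(\cA_I)}\,(x_I)_1 \;=\; \frac{\MV(\cA)}{\MV(\cA_I)}\,\Sigma_1(\V^\times(\cF_I)),
\end{equation*}
which is the desired identity for generic $\cF$, and hence for all $\cF$ at which both sides are defined, since each side is a rational function of the coefficients. The main obstacle I anticipate is establishing the uniform fiber size $\MV(\cA)/\MV(\cA_I)$ at every $x_I \in \V^\times(\cF_I)$ rather than merely generically over $\C^{\cA}$; I would handle this by applying Lemma \ref{lem:GeneralMinkowski} to the system in the complementary variables $x_{I^c}$ after specializing $x_I$, using Bernstein-genericity of $\cF$ to ensure the specialized system is itself generic.
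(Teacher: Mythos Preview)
Your proposal is correct and follows essentially the same route as the paper: reduce via an invertible monomial change of coordinates so that $\cF_I$ involves only a subset of variables containing $x_1$, then sum over the fibers of $\V^\times(\cF) \to \V^\times(\cF_I)$, each of constant size $\MV(\cA)/\MV(\cA_I)$ and constant first coordinate. The paper is slightly more direct in that it chooses $\Phi$ to fix $e_1$ outright (possible precisely because $e_1 \in L[\cA_I]$), rather than deducing $1 \in \{j_1,\ldots,j_{|I|}\}$ after the fact, but the substance is identical and your added care about uniform fiber size is a reasonable elaboration of what the paper asserts without detail.
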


\begin{proof}
If $e_1 \in L[\cA_I]$ then there exists an invertible $\Z$-linear map $\Phi:\Z^n \to \Z^n$ fixing $e_1$ such that $L[\Phi^{-1}(\cA_I)] \subseteq \langle e_1,\ldots,e_{|I|} \rangle$. 
Since this map fixes $e_1$, the corresponding monomial map fixes the first coordinates of points in $(\C^\times)^n$.  
Moreover, since $\Phi$ is invertible, it preserves mixed volumes. Hence, without loss of generality, we assume that $L[\cA_I] \subseteq \langle e_1,\ldots,e_{|I|} \rangle$.
Any fibre of $X_{\cA} \to X_{\cA_I} \times \C^{\cA_{I^c}}$ consists of $\frac{\MV(\cA)}{\MV(\cA_{I})}$-many points $\{(s_i,\cF)\}$ where each $s_i$ has the same first coordinate. Therefore, the trace $\Sigma_1(\V^\times(\cF))$ is $\frac{\MV(\cA)}{\MV(\cA_{I})}$ times the trace $\Sigma_1(\V^\times(\cF_I))$.
\end{proof}

\begin{corollary}
\label{cor:TestsFailForTriangular}
Suppose $\cA$ is triangular with $e_1 \in L[\cA]$ and $\cF$ is supported on $\cA$.  Then the trace of a union of $k$ points in each fibre of $X_{\cA} \to X_{\cA_I} \times \C^{\cA_{I^c}}$ is $\frac{k \cdot \MV(\cA_I)}{\MV(\cA)} \Sigma_1(\V^\times(\cF))$.  Thus, if $S$ is a union of $k$ points in each fibre, then the trace is an affine linear function of the coefficients of $\cF$, even if $S$ is not complete.
\end{corollary}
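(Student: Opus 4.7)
The plan is to derive this as an essentially direct consequence of Theorem \ref{thm:triangularTraces}, the key observation being that all points in a single fibre of $X_{\cA} \to X_{\cA_I} \times \C^{\cA_{I^c}}$ share the same first coordinate when $e_1 \in L[\cA_I]$. Following the setup in the proof of Theorem \ref{thm:triangularTraces}, I first apply an invertible monomial change of coordinates fixing $e_1$ so that $L[\cA_I] \subseteq \langle e_1,\ldots,e_{|I|}\rangle$. This normalization is legitimate because such a change of variables preserves both the first coordinate of any solution and the mixed volumes in play.

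Under this normalization the system $\cF_I$ becomes a square system in the variables $x_{j_1},\ldots,x_{j_{|I|}}$ (which include $x_1$), and the fibres of $X_{\cA} \to X_{\cA_I} \times \C^{\cA_{I^c}}$ over a fixed data $((s_I,\cF_I),\cF_{I^c})$ parametrize the $\MV(\cA)/\MV(\cA_I)$ common solutions of the remaining equations once $x_{j_1},\ldots,x_{j_{|I|}}$ have been pinned down to $s_I$. In particular, every point in such a fibre has first coordinate equal to the first coordinate of $s_I$. Summing over a set $S$ obtained by choosing $k$ points in each fibre therefore gives
\begin{equation*}
\Sigma_1(S) \;=\; k\,\Sigma_1(\V^\times(\cF_I)).
\end{equation*}

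Applying Theorem \ref{thm:triangularTraces} to rewrite $\Sigma_1(\V^\times(\cF_I))$ in terms of $\Sigma_1(\V^\times(\cF))$ yields
\begin{equation*}
\Sigma_1(S) \;=\; k\cdot\frac{\MV(\cA_I)}{\MV(\cA)}\,\Sigma_1(\V^\times(\cF)),
\end{equation*}
which is the claimed formula. Since the right-hand side is just a fixed scalar multiple of $\Sigma_1(\V^\times(\cF))$, any affine linear dependence that $\Sigma_1(\V^\times(\cF))$ has on a subset of coefficients (as established by Lemma \ref{lem:preciseForwardSparse} under the hypotheses of the sparse trace tests) is inherited verbatim by $\Sigma_1(S)$. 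This is what makes the test fail to detect incompleteness of $S$, finishing the corollary.

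There is no real obstacle here: the work was already absorbed into Theorem \ref{thm:triangularTraces}. The only thing to be careful about is that the normalization from that proof really does fix the first coordinate (so the trace is preserved) and that fibres really are constant on the first coordinate once that normalization is in place, so that selecting $k$ points per fibre contributes $k$ copies of each root of $\cF_I$ to $\Sigma_1(S)$.
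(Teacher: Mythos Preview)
Your argument is correct and matches the paper's intent: the corollary is stated without a separate proof because it is meant to be an immediate consequence of Theorem \ref{thm:triangularTraces}, and you have unpacked exactly that derivation --- normalize so that $L[\cA_I]\subseteq\langle e_1,\ldots,e_{|I|}\rangle$, observe that each fibre is constant on the first coordinate, sum $k$ points per fibre to get $k\,\Sigma_1(\V^\times(\cF_I))$, and then invoke the theorem. One minor remark: the corollary as written in the paper hypothesizes $e_1\in L[\cA]$, while the argument (yours and the one implicit in the paper via Theorem \ref{thm:triangularTraces}) actually uses $e_1\in L[\cA_I]$; you were right to work with the latter.
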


\begin{remark}
Theorem \ref{thm:triangularTraces} and Corollary \ref{cor:TestsFailForTriangular} may be thought of as partial triangular analogues to Theorem \ref{thm:lacunaryTraces} and Corollary \ref{cor:TestFailsForLacunary} for lacunary supports.
As depicted in Example \ref{ex:TriangularCounterExample}, triangularity guarantees only one type of proper subset of $\V^\times(\cF)$ which causes the sparse trace tests to erroneously succeed (compare to Example \ref{ex:LacunaryCounterExample}). Understanding whether additional problematic subsets of $\V^\times(\cF)$ always exist in the triangular setting is left to further research.
\end{remark}

Although the sparse trace tests may not be applied to lacunary or triangular supports, there are settings in which these properties are advantageous. In Section \ref{sec:Examples} we use these properties to more efficiently compute traces.   In \cite{SparseDecomposable}, the authors give a recursive algorithm for solving such polynomial systems. The only polynomial systems which need to be directly solved, in their method, are those which are neither lacunary nor triangular. Therefore, one may pair their work with our sparse trace tests to establish a method for verifying the completeness of solution sets to sparse systems, even when they are lacunary or triangular. 

\subsection{The restricted monodromy problem}
\label{sec:sec:monodromyproof}
The following result by Esterov shows that lacunary and triangular supports, are not only problematic for the sparse trace tests, but also restrict the monodromy group $G(\pi_{\cA})$.  
The goal of this section is to extend that result to restrictions of $\pi_{\cA}$.

\begin{proposition}{\cite[Theorem 1.5]{Esterov}}
\label{prop:Esterov} For a square set of supports $\cA$, the monodromy group $G(\pi_{\cA})$ is the full symmetric group if and only if either of the following are true: \begin{itemize} \item $\cA$ is neither lacunary nor strictly triangular.
\item  $\MV(\cA)=2$ and $[\Z^n:L[\cA]]=2$
\end{itemize}
\end{proposition}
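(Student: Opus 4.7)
The plan is to prove the biconditional by the classical two-step strategy for computing monodromy groups of branched covers: show that $G(\pi_{\cA})$ is primitive (admits no nontrivial system of blocks) and contains a simple transposition, which by Jordan's theorem forces the full symmetric group.

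For the necessity direction, suppose $\cA$ is lacunary or strictly triangular and not in the exceptional case $\MV(\cA)=[\Z^n:L[\cA]]=2$. If $\cA$ is lacunary, the factorization $\pi_{\cA}=\pi_{\cB}\circ(\phi\times\id)$ from Section \ref{sec:sec:LacunaryTriangular} shows that each fibre of $\phi\times\id$, of size $[\Z^n:L[\cA]]$, is a block under the monodromy action on the $\MV(\cA)$-point fibre of $\pi_{\cA}$. Hence $G(\pi_{\cA})$ is imprimitive unless the block size or the number of blocks equals one, which rules out lacunary $\cA$ except when $\MV(\cA)=[\Z^n:L[\cA]]=2$. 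The strictly triangular case is analogous: the decomposition $X_{\cA}\to X_{\cA_I}\times\C^{\cA_{I^c}}\to\C^{\cA}$ provides $\MV(\cA_I)$ blocks of size $\MV(\cA)/\MV(\cA_I)$, both of which exceed one precisely when $1<\MV(\cA_I)<\MV(\cA)$, i.e., strict triangularity.

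For the sufficiency direction, assume $\cA$ is neither lacunary nor strictly triangular; the exceptional case reduces to an explicit monodromy calculation on two letters. I would proceed in two steps. First, prove primitivity: any nontrivial system of blocks for $G(\pi_{\cA})$ corresponds to a nontrivial intermediate branched cover $X_{\cA}\to Y\to\C^{\cA}$. In the sparse toric context, such intermediate covers arise either from factoring through a monomial quotient of the torus (forcing $\cA$ to be lacunary) or from factoring through a square subsystem (forcing $\cA$ to be triangular); ruling out both combinatorial structures yields primitivity. Second, exhibit a transposition by constructing a one-parameter family $\cF_t\in\C^{\cA}$ crossing the branch locus transversally at a smooth point of the sparse discriminant, where exactly two solutions coalesce into a simple node and the remaining $\MV(\cA)-2$ solutions stay distinct. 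A small loop around this crossing yields a transposition, and combined with primitivity this completes the proof.

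The main obstacle is step two of sufficiency: engineering the simple-node degeneration. The sparse discriminant can be singular and reducible, and one must verify that its generic smooth points correspond to a single quadratic collision rather than higher-order degenerations or simultaneous collisions at several points of the fibre. Esterov handles this via an induction on the combinatorial complexity of $\cA$, using toric-geometric tools such as Newton polytope subdivisions and refined sparse resultants to reduce to tractable base cases. The exceptional $\MV(\cA)=[\Z^n:L[\cA]]=2$ case surfaces naturally in this induction, as the unique low-dimensional configuration where the lacunary block structure happens to coincide with the full symmetric group on two letters.
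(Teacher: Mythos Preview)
The paper does not prove this proposition at all: it is quoted verbatim from Esterov \cite[Theorem~1.5]{Esterov} and used as a black box in the subsequent arguments (Theorems~\ref{thm:SimpleTransposition}, \ref{thm:2Transitive}, and Lemma~\ref{lem:distinctfirstcoordinates}). There is therefore no ``paper's own proof'' to compare your proposal against.

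That said, your sketch is a faithful outline of Esterov's argument. The necessity direction you give is exactly how the paper itself \emph{uses} the decompositions of Section~\ref{sec:sec:LacunaryTriangular} to explain why lacunary and strictly triangular supports obstruct the full symmetric group, and the sufficiency direction (primitivity plus a transposition coming from a transversal crossing of the discriminant) is the strategy of \cite{Esterov}. Your identification of the main obstacle---controlling the generic singularity type on the sparse discriminant so that a single simple node occurs---is accurate; Esterov resolves it by showing the discriminant has positive degree in every coefficient and is generically smooth, which the present paper invokes in the proof of Theorem~\ref{thm:SimpleTransposition} via \cite[Lemmas~1.20 and~3.9]{Esterov}. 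One small correction: your claim that every intermediate branched cover in the toric setting must come from a monomial quotient or a square subsystem is not something one can take for granted; Esterov's route to primitivity is more indirect, going through $2$-transitivity rather than classifying intermediate covers.
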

\begin{remark}
The monodromy group associated to the family $u_{1,0}+u_{1,2}x^2$ of sparse polynomial systems is the full symmetric group, despite the support $\{0,2\}$ being lacunary. This example shows the necessity of the second condition above.
\end{remark}

Throughout this section, we assume that $\cA=(A_1,\ldots,A_n)$ is a collection of supports in $\Z^n$ with $\MV(\cA)>0$. We fix $\cB \subseteq \cA$, $\cC = \cA \backslash \cB$, and set $N=\sum_{i=1}^n |B_i|$. We take $\cF \in \C^{\cA}$ to be generic and write  $\cF=\cF_{\cB} + \cF_{\cC} \in \C^{\cB} \times \C^{\cC}$. We are interested in the monodromy action induced by varying the coefficients indexed by the points in $\cB$.

We consider restricted polynomial systems $\mydef{\cF_{u}} \vcentcolon = \cF(u_{\cB},\cF_{\cC})= \cF_{\cB}(u) + \cF_{\cC} = (f_{1,u},\ldots,f_{n,u})$, where 
$$f_{i,u}(x) = \underbrace{\sum_{\beta \in B_i}u_{i,\beta}x^\beta}_{g_{i,u}(x)} + \underbrace{\sum_{\gamma \in C_i} c_{i,\gamma}x^\gamma}_{h_{i}(x)},$$
and the corresponding restricted branched cover 
\[
 \xymatrix{&X_{\cB,\cF_{\cC}}=\{(x,b) \mid \cF_b(x)=0\} \ar[d]_{\pi_{\cB,\cF_{\cC}}}  \\
   &\hspace{1.25in}  \C^{\cB} \cong \C^{\cB} \times  \cF_{\cC} \subseteq \C^{\cA}. &
 }
\]
The linear space $\mathbb{C}^{\cB} \times \cF_{\cC}$ in $\mathbb{C}^{\cA}$ is not generic, and so, care must be taken in computing the~monodromy group. 
Following the approach of Harris \cite{Harris}, we establish conditions under which the monodromy group $G(\pi_{\cB,\cF_{\cC}})$ is the full symmetric group by showing that 
\begin{enumerate}
\item $G(\pi_{\cB,\cF_{\cC}})$ contains a simple transposition.
\item $G(\pi_{\cB,\cF_{\cC}})$ is $2$-transitive.
\end{enumerate}
Note that when $\cB=\cA$, the branched cover $\pi_{{\cA},\emptyset}$ is the same as $\pi_{\cA}$, so Proposition \ref{prop:Esterov} applies.

\begin{theorem}
\label{thm:SimpleTransposition}
If $\cA$ is a square set of supports which is neither lacunary nor triangular and $N>0$, then the monodromy group $G(\pi_{\cB,\cF_{\cC}})$ contains a simple transposition.
\end{theorem}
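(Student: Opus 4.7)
The plan is to deduce the result from Proposition \ref{prop:Esterov} via a transversality argument. Since $\cA$ is neither lacunary nor triangular (hence not strictly triangular), Proposition \ref{prop:Esterov} implies that the full monodromy group $G(\pi_\cA)$ is the symmetric group $S_{\MV(\cA)}$. In particular, the branch locus $\Delta_\cA \subseteq \C^\cA$ of $\pi_\cA$ is a nonempty hypersurface whose generic smooth point corresponds to a sparse system $\cF^*$ with exactly one simple double zero in the torus and $\MV(\cA)-2$ remaining simple distinct zeros. A small loop in $\C^\cA$ around such a generic smooth point of $\Delta_\cA$ lifts to a simple transposition on the fiber.

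The goal is to realize such a loop entirely within the affine subspace $L \vcentcolon= \C^{\cB} \times \{\cF_\cC\} \subseteq \C^\cA$. The key observation is that as $\cF_\cC$ varies over $\C^{\cC}$, the family $\{L_{\cF_\cC}\}$ consists of parallel translates that together sweep out all of $\C^\cA$. By a Kleiman/Bertini-type transversality argument applied to this translation-transitive family, for generic $\cF_\cC$ the intersection $L \cap \Delta_\cA$ is nonempty of expected dimension $N-1 \geq 0$ (using $N > 0$ together with $\operatorname{codim}\Delta_\cA = 1$), while the singular locus $\Delta_\cA^{\mathrm{sing}}$ (of codimension at least $2$ in $\C^\cA$) meets $L$ in codimension at least $2$ within $L$. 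Hence $L \cap \Delta_\cA^{\mathrm{sm}}$ is nonempty and contains a point $\cF^*$ of the generic double-point type described above, since the locus of non-generic smooth points of $\Delta_\cA$ has codimension at least $2$ in $\C^\cA$ as well.

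Fix such a transversal intersection point $\cF^* \in L \cap \Delta_\cA^{\mathrm{sm}}$. A small loop $\gamma$ in $L$ encircling $\cF^*$ (and no other point of $L \cap \Delta_\cA$) can be deformed through $\C^\cA \setminus \Delta_\cA$ into a small transversal loop in $\C^\cA$ around $\cF^*$; since lifting is preserved under such deformation, $\gamma$ induces the same permutation on $\pi_{\cB,\cF_\cC}^{-1}(\cF)$ as the ambient loop in $\C^\cA$. That permutation is a simple transposition, and it lies in $G(\pi_{\cB, \cF_\cC})$ by construction.

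The main obstacle is justifying the transversality step: ensuring that $L$ actually meets the generic-double-point part of $\Delta_\cA^{\mathrm{sm}}$, rather than being contained in $\Delta_\cA^{\mathrm{sing}}$ or missing $\Delta_\cA$ altogether. Both pathological possibilities are excluded because the translation family $\{L_{\cF_\cC}\}_{\cF_\cC \in \C^{\cC}}$ is transitive on $\C^\cA$, so a generic translate cannot be contained in any proper subvariety. The hypothesis $N > 0$ ensures $\dim L \geq 1$ so that nontrivial loops exist in $L$, while the hypotheses that $\cA$ is neither lacunary nor triangular are used exactly to invoke Proposition \ref{prop:Esterov} and thereby identify the local structure of $\Delta_\cA$ at its generic smooth point.
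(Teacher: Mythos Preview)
Your overall strategy is the same as the paper's: invoke Esterov, then argue that for generic $\cF_\cC$ the slice $L = \C^\cB \times \{\cF_\cC\}$ meets the $\cA$-discriminant transversally at a generic smooth point, so a small loop in $L$ around that point realizes a simple transposition.

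The gap is in your justification that $L \cap \Delta_\cA$ is nonempty. You write that both pathologies --- $L$ contained in $\Delta_\cA^{\mathrm{sing}}$ and $L$ missing $\Delta_\cA$ altogether --- are ruled out because the translates $\{L_{\cF_\cC}\}$ cover $\C^\cA$ and hence a generic one ``cannot be contained in any proper subvariety.'' This argument handles the first pathology but not the second: if the defining equation of $\Delta_\cA$ happened not to involve any $\cB$-coordinate, then $\Delta_\cA$ would be a cylinder $\C^\cB \times V$ with $V \subsetneq \C^\cC$, and the \emph{generic} translate $L_{\cF_\cC}$ would miss it entirely even though the translates together still cover $\C^\cA$. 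Covering (or ``transitivity of the translation family'') gives you only that \emph{some} translate meets $\Delta_\cA$, not that a generic one does. The paper closes exactly this gap by citing a nontrivial input from \cite{Esterov} (Lemmas~1.20 and~3.9 there): when $\cA$ is neither lacunary nor triangular, the $\cA$-discriminant has positive degree in \emph{every} coefficient $c_{i,\alpha}$, so in particular it genuinely depends on the $\cB$-coordinates whenever $N>0$. You need this fact (or an equivalent one); the covering argument alone does not suffice.
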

\begin{proof}
By Proposition \ref{prop:Esterov}, $\pi_{\cA}$ contains a simple transposition whenever $\cA$ is neither lacunary nor triangular. In this setting, the transposition is witnessed by any complex line in $\C^{\cA}$ which crosses the $\cA$-discriminant transversally. A monodromy loop, within this complex line, around its isolated point of intersection with the $\cA$-discriminant induces the transposition. Thus, it is enough to show that there exists such a line in $ \C^{\cB} \times \cF_{\cC}$ which also crosses the discriminant transversally. 

There are only two ways that a generic line in $\C^{\cB} \times \cF_{\cC}$ fails to cross the discriminant transversally: the discriminant does not involve the coefficients indexed by $\cB$ or its intersection with $\C^{\cB} \times \cF_{\cC}$ is singular everywhere. 

Since $\cA$ is neither lacunary nor triangular, Proposition \ref{prop:Esterov} implies $G(\pi_{\cA})$ contains a simple transposition and so 
the $\cA$-discriminant is not singular everywhere. Since $\bigcup_{\cG_{\cC} \in \C^{\cC}} \C^{\cB} \times \cG_{\cC} = \C^{\cA}$, the intersection of the discriminant with $\C^{\cB} \times \cF_{\cC}$ for generic $\cF_{\cC}$ is not singular everywhere. Moreover, by \cite[Lemma 1.20]{Esterov} and \cite[Lemma 3.9]{Esterov}, the defining equation of the discriminant has positive degree in  $c_{\alpha}$ for each $\alpha \in \cA$. In particular, this equation involves the coefficients indexed by $\cB$.
\end{proof}

Finding conditions which imply that $G(\pi_{\cB,\cF_{\cC}})$ is $2$-transitive is more involved, and we follow the approach of \cite{Harris}. 
We consider the following incidence correspondence of the fibre-square of $\pi_{\cB,\cF_{\cC}}$,
\[
 \xymatrix{&\mydef{Y}=\{(x,y,b) \mid \cF_b(x)=\cF_b(y)=0\} \ar[rd]_{\pi} \ar[ld]_{p} \\
  S & & \C^{\cB}
 }
\]
equipped with projections to the sets $\mydef{S} = ((\mathbb{C}^\times)^n)^2$ and $\C^{\cB}$. To show $\pi_{\cB,\cF_{\cC}}$ is $2$-transitive, we rely on the following elementary result.

\begin{proposition}
The monodromy group $\pi_{\cB,\cF_{\cC}}$ is $2$-transitive if and only if $Y$ has two components of top dimension.
\end{proposition}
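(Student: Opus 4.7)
The plan is to identify the top-dimensional components of $Y$ with the orbits of the monodromy group $G(\pi_{\cB,\cF_{\cC}})$ acting on ordered pairs of solutions, after which the equivalence reduces to a tautology about $2$-transitivity.

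First I would set $d = \MV(\cA)$ and observe that for generic $b \in \C^{\cB}$, the fibre $\pi^{-1}(b) \subseteq Y$ is finite and consists of $d^2$ ordered pairs $(x,y)$ of solutions of $\cF_b$. Hence every component of $Y$ that dominates $\C^{\cB}$ has dimension equal to $\dim \C^{\cB}$, and these are precisely the top-dimensional components. Standard covering-space monodromy (applied to the unramified locus of $\pi$) then identifies the top-dimensional components of $Y$ with the orbits of $G(\pi_{\cB,\cF_{\cC}})$ acting diagonally on ordered pairs in $\pi_{\cB,\cF_{\cC}}^{-1}(b)\times \pi_{\cB,\cF_{\cC}}^{-1}(b)$.

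Next I would separate out the diagonal $\Delta = \{(x,x,b) : \cF_b(x)=0\}$. This is the image of $X_{\cB,\cF_{\cC}}$ under the natural embedding $(x,b)\mapsto(x,x,b)$, so it is an irreducible subvariety of dimension $\dim\C^{\cB}$ projecting dominantly onto $\C^{\cB}$ with generic fibre of size $d$. Thus $\Delta$ contributes one top-dimensional component, corresponding to the diagonal orbit. Consequently the remaining top-dimensional components of $Y$ are in bijection with the orbits of $G(\pi_{\cB,\cF_{\cC}})$ on the $d(d-1)$ ordered pairs of \emph{distinct} solutions.

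Putting these together, $Y$ has exactly two top-dimensional components if and only if $G(\pi_{\cB,\cF_{\cC}})$ acts transitively on ordered pairs of distinct solutions, which is by definition $2$-transitivity of the monodromy group. The only real care needed is to verify that the off-diagonal locus $Y\setminus\Delta$ contains no lower-dimensional components masquerading as separate orbits; this follows because $\pi$ restricted to $Y\setminus\Delta$ is, on the Bernstein-generic locus, a finite unramified cover of degree $d(d-1)$, so each of its irreducible components already has top dimension and dominates $\C^{\cB}$. I expect the minor technical step will be making precise that the unramified locus of $\pi$ is nonempty open in $\C^{\cB}$, which is immediate because $\cF$ was chosen Bernstein-generic and $\pi_{\cB,\cF_{\cC}}$ is a branched cover.
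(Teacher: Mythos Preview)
The paper does not prove this proposition; it records it as an elementary fact in the spirit of Harris and uses it as a black box. Your argument is precisely the standard one---top-dimensional irreducible components of the fibre square $Y$ correspond to orbits of the monodromy group on ordered pairs in a generic fibre, the diagonal $\Delta\cong X_{\cB,\cF_{\cC}}$ always contributes one such component, and the off-diagonal contributes one more exactly when the action on ordered pairs of distinct points is transitive---and it is correct.

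The one spot where a sentence more would help is your assertion that the components dominating $\C^{\cB}$ are \emph{precisely} the top-dimensional components. You have shown that dominating components have dimension $N=\dim\C^{\cB}$, but you have not explicitly excluded components of $Y$ supported entirely over the branch locus that might, a priori, have dimension $\ge N$ (if $\pi_{\cB,\cF_{\cC}}$ has positive-dimensional fibres there). In the present setting this is easy to dispatch: the defining equations of $Y$ are linear in the $\C^{\cB}$-coordinates, so the other projection $p:Y\to((\C^\times)^n)^2$ has affine-linear fibres of dimension at most $N-n$ over a base of dimension $\le 2n$ (indeed, the paper's subsequent fibre-dimension lemma pins this down exactly), forcing $\dim Y\le N$. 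With that one line added your proof is complete.
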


The structure of the argument is as follows: When $\cB$ is abundant, we stratify the image $p(Y)$ of $Y$ into  subvarieties of $S$. We compute the dimension of each stratum and show that the fibre dimension is constant over each. We show that there are only two preimages of strata which have top dimension and these come from irreducible strata. Since $Y$ is the union of these preimages, we conclude that $Y$ has at most two components of top dimension.

Suppose $\cB$ is abundant and the support $\cA$ has been shifted so $\textbf{0} \in B_i$ for each $i \in [n]$.
For all $I \subseteq [n]$, we define
\begin{align*}
\mydef{V_I} &= \{(x,y) \in S \,\,\mid\,\, x^\alpha = y^\alpha \text{ for all } \alpha \in L[\cB_I]\}\\
\mydef{U_I} &= V_I \backslash \bigcup_{J \supseteq I} V_J \quad  \quad \quad \quad
\mydef{W_I} = U_I \cap p(Y).
\end{align*} 
Since $\cB$ is abundant, $\rank(L[\cB_I]) = n$ for any $I \neq \emptyset$. Thus, after an invertible monomial change of coordinates, $L[\cB_I] = \mydef{k_1} \Z \oplus \cdots \oplus \mydef{k_n}\Z$, where $k_1,\ldots,k_n$ are the \mydef{invariant factors} of $L[\cB_I]$. With respect to this lattice, the variety $V_I$ decomposes into components $\mydef{V_I(\omega)}$, one for each tuple $\omega = (\omega_1,\ldots,\omega_n)$ where $\omega_i$ is a $k_i$-th root of unity. Each component $V_I(\omega)$ is an $n$-dimensional irreducible variety defined via the explicit parametrization 
\begin{align*}
\mydef{\varphi_\omega}: (\C^\times)^n &\to S \\
(x_1,\ldots,x_n) &\mapsto (x_1,\ldots,x_n,\omega_1 x_1,\ldots,\omega_n x_n).
\end{align*}
Note that if $V_J(\omega) \cap V_I(\omega') \neq \emptyset$, then they are equal. Hence, the irreducible components of $U_I$ are components of $V_I$. That is, when $I \neq \emptyset$, the components of $U_I$ are of the form $V_I(\omega)$ for some (but not all) tuples of roots of unity $\omega$. Hence, we refer to the components of $U_I$ as $\mydef{U_I(\omega)}$. 
The only exception is $U_{\emptyset} = S \backslash \bigcup_{\emptyset\neq I} V_I$
\begin{lemma}
\label{lem:FibreDimension}
For any $s \in W_I$, the dimension of $p^{-1}(s)$ is $N-2n+|I|$. 
\end{lemma}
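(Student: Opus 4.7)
The plan is to realize $p^{-1}(s)$ as the solution set of a system of linear equations in the coefficients $b \in \C^{\cB}$ and compute its dimension by a rank analysis. For a fixed $s = (x,y)$, the fibre consists of those $b = (u_{i,\beta})$ with $\cF_b(x) = \cF_b(y) = 0$, which unpacks into $2n$ linear equations, two per index $i \in [n]$:
\[
E_{i,x}: \sum_{\beta \in B_i} u_{i,\beta} x^\beta = -h_i(x), \qquad E_{i,y}: \sum_{\beta \in B_i} u_{i,\beta} y^\beta = -h_i(y).
\]
Since the equations for different $i$ involve disjoint blocks of unknowns, the solution space factors as a product over $i$, and it suffices to compute the rank of each pair $\{E_{i,x}, E_{i,y}\}$ in $\C^{B_i}$.

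The key step is to characterize when the pair is linearly dependent. Its two coefficient vectors are $(x^\beta)_{\beta \in B_i}$ and $(y^\beta)_{\beta \in B_i}$; since $\textbf{0} \in B_i$, both have a $1$ in the $\textbf{0}$-coordinate, so any constant of proportionality must be $1$. Hence the pair is dependent iff $x^\beta = y^\beta$ for every $\beta \in B_i$, and since $\textbf{0} \in B_i$ forces $L[B_i]$ to be the $\Z$-span of $B_i$, this is equivalent to $x^\alpha = y^\alpha$ for all $\alpha \in L[B_i]$. Using the identity $L[\cB_J] = \sum_{j \in J} L[B_j]$, I would check that $s \in V_J$ iff this holds for every $i \in J$, so that $s \in U_I$ precisely when $I$ equals the full set of indices $i$ for which the pair $\{E_{i,x}, E_{i,y}\}$ is dependent.

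Given this characterization, the dimension count is immediate. For $i \in I$, the pair has rank one; consistency reduces to $h_i(x) = h_i(y)$, which follows from $s \in W_I \subseteq p(Y)$ by subtracting the two equations satisfied by any witness $b^* \in p^{-1}(s)$ and using that $x^\beta = y^\beta$ on $B_i$. The solution space then has dimension $|B_i| - 1$. For $i \notin I$, the pair has rank two, so the solution space has dimension $|B_i| - 2$, with consistency automatic since the coefficient matrix surjects onto $\C^2$. Summing gives
\[
\dim p^{-1}(s) = \sum_{i \in I}(|B_i|-1) + \sum_{i \notin I}(|B_i|-2) = N - |I| - 2(n - |I|) = N - 2n + |I|.
\]
The main subtlety is the lattice bookkeeping matching the combinatorial label $I$ of $U_I$ with the analytic set of dependent equation pairs; once that identification is in place, everything else is routine linear algebra.
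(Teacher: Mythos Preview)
Your proof is correct and follows essentially the same approach as the paper: write the fibre as the solution set of the $2n$ affine linear equations $g_{i,u}(x)=-h_i(x)$, $g_{i,u}(y)=-h_i(y)$, use the block structure in $i$, and determine exactly when the $i$-th pair is dependent via the $\mathbf{0}\in B_i$ trick forcing the proportionality constant to be $1$. You are somewhat more explicit than the paper about matching the combinatorial label $I$ in $U_I$ with the analytic set of dependent pairs and about checking consistency in each case, but the argument is the same.
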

\begin{proof}
The fibre of $p$ over $s=(x,y)$ is the solution set to the $2n$-many \emph{affine} linear equations
\begin{align}
\label{eq:fibreEquations}
g_{1,u}(x) = -h_i(x),\quad g_{2,u}(x)=-h_i(x),\quad & \ldots, \quad g_{n,u}(x)=-h_n(x)\\
g_{1,u}(y) = -h_i(y), \quad g_{2,u}(y)=-h_i(y), \quad & \ldots, \quad g_{n,u}(y)=-h_n(y) \notag
\end{align}
in the variables $\{u_{i,\beta} \,\, |\,\, \beta \in B_i\}$. Since $s \in W_I \subseteq p(Y)$, the fibre over $s$ is nonempty, and the dimension of the fibre is given by the dimension of the kernel of the linear map $u \mapsto (g_{i,u}(x),g_{i,u}(y))_{i=1}^n$. Since each condition ($g_{i,u}(x)=0$ or $g_{i,u}(y)=0$) defining this kernel only involves the variables $\{u_{i,\beta}\}_{\beta \in B_i}$, the only linear dependencies amongst System \eqref{eq:fibreEquations} are between $g_{i,u}(x) = 0$ and $g_{i,u}(y)=0$ for some $i$. Since the $u_{i,\beta}$ are indeterminants, this linear dependency occurs exactly when the vectors $\{x^\beta\}_{\beta \in B_i}$ and $\{y^\beta\}_{\beta \in B_i}$ are linearly dependent. Equivalently, since $\textbf{0} \in B_i$ is a coordinate of each vector, they must be equal. Hence, the codimension of $p^{-1}(s)$ is  $2n-|I|$, and its dimension is $N-2n+|I|$.
\end{proof}

\begin{corollary}
\label{cor:WIasZeroSet}
The variety $W_I$ is $U_I \cap \V(\{h_i(x)-h_i(y)\}_{i \in I})$. In particular, $W_{\emptyset} = U_{\emptyset}$ has dimension $2n$ and $p$ is dominant.
\end{corollary}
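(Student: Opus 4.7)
The plan is to describe $W_I = U_I \cap p(Y)$ explicitly by asking, for each $(x,y) \in U_I$, when the affine linear system in $b$ defined by $\cF_b(x) = \cF_b(y) = 0$ is consistent. Writing $\cF_b = \cF_{\cB}(b) + \cF_{\cC}$, this decouples into $n$ independent pairs of equations, one per $i \in [n]$: namely $g_{i,u}(x) = -h_i(x)$ and $g_{i,u}(y) = -h_i(y)$ in the unknowns $\{u_{i,\beta}\}_{\beta \in B_i}$, exactly as set up in the proof of Lemma~\ref{lem:FibreDimension}.

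The consistency analysis then splits on whether $i \in I$. For $i \in I$, abundance together with $\mathbf{0} \in B_i$ gives $B_i \subseteq L[B_i] \subseteq L[\cB_I]$, so $x^\beta = y^\beta$ for every $\beta \in B_i$; the two left-hand sides are then identical linear forms in $u$, and the pair is consistent if and only if $h_i(x) = h_i(y)$. For $i \notin I$, the vectors $(x^\beta)_{\beta \in B_i}$ and $(y^\beta)_{\beta \in B_i}$ must differ, since otherwise the lattice condition would place $(x,y)$ in $V_{I \cup \{i\}}$, contradicting the maximality in the definition of $U_I$. The shared coordinate $x^{\mathbf{0}} = y^{\mathbf{0}} = 1$ then rules out proportionality, so the map $u_i \mapsto (g_{i,u}(x), g_{i,u}(y))$ surjects onto $\C^2$ and the pair always admits a solution. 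Intersecting the resulting conditions over all $i$ gives $W_I = U_I \cap \V(\{h_i(x)-h_i(y)\}_{i \in I})$, which is the first claim.

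Specializing to $I = \emptyset$, the intersection with $\V(\emptyset)$ is trivial, so $W_\emptyset = U_\emptyset$. Each nonempty $V_I$ (for $I \neq \emptyset$) is a finite union of $n$-dimensional images of the parametrizations $\varphi_\omega$, so $\bigcup_{I \neq \emptyset} V_I$ is a proper subvariety of the $2n$-dimensional $S$; hence $U_\emptyset$ is dense and Zariski-open in $S$ with $\dim U_\emptyset = 2n$, which forces $p$ to be dominant. The main obstacle in this plan is the case $i \notin I$: one must argue both nonequality of the coefficient vectors (from maximality in $U_I$) and non-proportionality (from the shift $\mathbf{0} \in B_i$) in order to guarantee surjectivity of $u_i \mapsto (g_{i,u}(x), g_{i,u}(y))$, and it is precisely here that the abundance hypothesis on $\cB$ enters via its implication that $L[\cB_I]$ contains $B_i$ whenever $i \in I$.
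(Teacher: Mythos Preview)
Your argument is correct and follows the same route as the paper: both analyze when the affine linear system \eqref{eq:fibreEquations} is consistent by reducing, for each $i$, to whether the linear forms $g_{i,u}(x)$ and $g_{i,u}(y)$ are proportional (equivalently equal, since $\mathbf{0}\in B_i$), and both conclude that inconsistency occurs exactly when $i\in I$ and $h_i(x)\neq h_i(y)$. One small correction to your closing commentary: the containment $B_i\subseteq L[\cB_I]$ for $i\in I$ needs only $\mathbf{0}\in B_i$, not abundance; abundance is instead what makes each $V_I$ with $I\neq\emptyset$ an $n$-dimensional subvariety, which is precisely what you use (correctly) when concluding that $U_\emptyset$ is open and $2n$-dimensional.
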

\begin{proof}
A point $s=(x,y) \in U_I$ fails to be in $W_I$ whenever $p^{-1}(s)$ is empty, or equivalently, System~\eqref{eq:fibreEquations} is inconsistent. This occurs if and only if $g_{i,u}(x) = -h_i(x)$ and $g_{i,u}(y) = -h_i(y)$ describe parallel hyperplanes in the variables $\{u_{i,\beta}\}_{\beta \in B_i}$. This happens when $i \in I$ and $h_i(x) \neq h_i(y)$.
\end{proof}
We now introduce several definitions in preparation for establishing the dimensions of the $W_I(\omega)$ when $I \neq \emptyset$.  We assume that an invertible monomial change of coordinates has been applied so that $L[\cB_I] = k_1 \Z \oplus \cdots \oplus k_n \Z$. Since we are only interested in computing the dimension, we may perform this change of coordinates individually for each $I$. For a tuple of roots of unity $\omega$ indexing a component of $U_I$, we define the lattice 
$$\mydef{\mathcal L_{\omega}} = \{\alpha \in \Z^n \,\, |\,\,  \omega^\alpha =1\}$$
and the supports
$$\mydef{\cC^{\omega}} = (\mydef{C^\omega_1},\ldots,\mydef{C^\omega_n}) =(C_1 \backslash \mathcal L_\omega,\ldots,C_n \backslash \mathcal L_\omega)$$
We define $\mydef{\delta_I(\omega)}$ to be the number of supports in $\cC^\omega_I$ which are nonempty, that is the number of $i \in I$ such that $C^{\omega}_i \neq \emptyset$. 

\begin{example}
\label{ex:trickysupports}
Let $B_1 = \{(0,0),(1,0),(0,2)\}$, $B_2 = \{(0,0),(2,0),(0,2)\}$, $C_1 = \{(0,1),(1,1),(2,0)\}$, $C_2 = \{(1,1)\}$, and $\cA = \cB \cup \cC$ as depicted in Figure
\ref{fig:trickysupports}. We observe that $(0,0)$, $(1,1)$, and $(2,0)$ are all contained in $\mathcal L_{(-1,-1)}$ and so $\cC^{(-1,-1)} = (\{(0,1)\},\emptyset)$. Hence, $\delta_{2}(-1,-1) = 0$. On the other hand, we have $\mathcal C^{(1,-1)} = (\{(0,1),(1,1)\},\{(1,1)\})$ and $\delta_{12}(1,-1)=2$. The subset $\mathcal C^{(1,-1)}_2$ has negative defect.

\begin{figure}[htpb!]
\begin{tikzpicture}[scale=.7]
\filldraw[fill=gray, fill opacity=0.3] (0,0) -- (2,0) -- (0,2) -- cycle;
\foreach \i in {0,...,3} {\foreach \j in {0,...,3} {\filldraw[color=gray] (\i,\j) circle (.03);}}
\filldraw[fill=white] (0,0) circle (.15);
\filldraw[fill=white] (1,0) circle(.15);
\filldraw (1,1) circle (.15);
\filldraw (0,1) circle (.15);
\filldraw (2,0) circle (.15);
\filldraw[fill=white] (0,2) circle (.15);

\node at (1,-1) {$A_1$}; 

\begin{scope}[shift={(5,0)}]

\node at (1,-1) {$A_2$}; 
\filldraw[fill=gray, fill opacity=0.3] (0,0) -- (2,0) -- (0,2) -- cycle;
\foreach \i in {0,...,3} {\foreach \j in {0,...,3} {\filldraw[color=gray] (\i,\j) circle (.03);}}
\filldraw[fill=white] (0,0) circle (.15);
\filldraw[fill=white] (0,2) circle(.15);
\filldraw (1,1) circle (.15);
\filldraw[fill=white] (2,0) circle (.15);
\end{scope}
\end{tikzpicture}
\caption{Support $\cA=(A_1,A_2)$ where the points in $\cB$ are white and the points in $\cC$ are filled.}
\label{fig:trickysupports}
\end{figure}
\end{example}

\begin{lemma}
\label{lem:WIdimension}
For $\emptyset \neq I \subseteq [n]$ and component $U_I(\omega)$, the variety $W_I(\omega)=U_I(\omega) \cap p(Y)$ is empty if the defect of any subset of $\cC^\omega_I$ is negative. Otherwise, $W_I(\omega)$ is the union of irreducible varieties of dimension $n-\delta_I(\omega)$. 
\end{lemma}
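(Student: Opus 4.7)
The plan is to realize $W_I(\omega)$ explicitly as the torus zero set of an $n$-variable polynomial system via the parametrization $\varphi_\omega$, and then apply Lemma~\ref{lem:GeneralMinkowski}. First, I would verify that $\varphi_\omega$ gives an isomorphism between $(\C^\times)^n$ and $U_I(\omega)$. Since $L[\cB_J]\supseteq L[\cB_I]$ whenever $J\supseteq I$, the conditions defining $V_J$ refine those defining $V_I$, so each $V_J$ is a union of entire components $V_I(\omega')$. Consequently, $U_I(\omega)=V_I(\omega)\setminus\bigcup_{J\supsetneq I}V_J$ is either empty (in which case there is nothing to prove) or equals the whole component $V_I(\omega)$, and in the latter case $\varphi_\omega$ is an isomorphism onto $U_I(\omega)$.

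Next, combining Corollary~\ref{cor:WIasZeroSet} with this parametrization, a direct expansion gives
\[
\varphi_\omega^{*}\bigl(h_i(y)-h_i(x)\bigr) \;=\; h_i(\omega\cdot x)-h_i(x) \;=\; \sum_{\gamma\in C_i} c_{i,\gamma}\bigl(\omega^{\gamma}-1\bigr)\,x^{\gamma}.
\]
Terms with $\gamma\in\mathcal{L}_\omega$ vanish, so the polynomial is supported precisely on $C^{\omega}_i=C_i\setminus\mathcal{L}_\omega$. Its coefficients $c_{i,\gamma}(\omega^{\gamma}-1)$ remain generic, since the $c_{i,\gamma}$ are generic and each $\omega^{\gamma}-1$ is a fixed nonzero constant. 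Under $\varphi_\omega$, then, $W_I(\omega)$ is identified with the torus zero set of the system $\{\varphi_\omega^{*}(h_i(y)-h_i(x))\}_{i\in I}$ on $(\C^\times)^n$.

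Whenever $C^{\omega}_i=\emptyset$ for some $i\in I$, the corresponding pulled-back polynomial is identically zero and imposes no constraint, so the effective system consists of exactly $\delta_I(\omega)$ generic polynomials supported on the nonempty members of $\cC^{\omega}_I$. Applying Lemma~\ref{lem:GeneralMinkowski} to this subsystem then yields both claims: the system has no torus solutions precisely when some subtuple of its supports has negative defect, in which case $W_I(\omega)=\emptyset$; otherwise the solution set has pure dimension $n-\delta_I(\omega)$. The main point requiring care — and the principal obstacle in the proof — is the correct interpretation of the defect hypothesis: the condition on ``subsets of $\cC^{\omega}_I$'' in the statement must be read on subtuples of the \emph{nonempty} $C^{\omega}_i$, since empty supports correspond to vacuous equations that contribute neither to the rank nor to the count $|J|$ in the definition of defect.
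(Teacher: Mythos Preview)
Your proof is correct and follows essentially the same route as the paper: pull back $W_I(\omega)$ through the parametrization $\varphi_\omega$, identify the resulting equations as a generic sparse system supported on $\cC^\omega_I$ (after the cancellation over $\mathcal{L}_\omega$), and apply Lemma~\ref{lem:GeneralMinkowski}. Your explicit remark that the defect hypothesis must be read on the \emph{nonempty} members of $\cC^\omega_I$ is a worthwhile clarification that the paper's statement leaves implicit.
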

\begin{proof}
By Corollary \ref{cor:WIasZeroSet}, $W_I(\omega)$ may be identified with the zero set  $${\V(\{h_i(x_1,\ldots,x_n) - h_i(\omega_1x_1,\ldots,\omega_nx_n)\}_{i \in I})},$$ its preimage under the parametrization of $U_I(\omega)$. Explicitly, these polynomials are
\begin{equation}
\label{eq:initialSparsePullback}
\sum_{\gamma \in C_i} c_{i,\gamma}x^\gamma - \sum_{\gamma \in C_i} c_{i,\gamma}(\omega_1 x_1)^{\gamma_1} \cdots (\omega_nx_n)^{\gamma_n} \hspace{0.3 in} \text{ for } i \in I.
\end{equation}
After collecting terms, this is the sparse polynomial system 
\begin{equation}
\label{eq:sparsePullback}
\sum_{\gamma \in \cC_I^\omega} c_{i,\gamma}(1-\omega^{\gamma})x^{\gamma}  \hspace{ 0.3 in} \text{ for } i \in I.
\end{equation}
System \eqref{eq:initialSparsePullback} is supported on $C_i$, however, any term indexed by $\gamma \in C_i \cap \mathcal L_\omega$ does not appear after collecting terms to obtain System \eqref{eq:sparsePullback} because $(1-\omega^\gamma) =0$. Hence, System \eqref{eq:sparsePullback} is supported on $\cC_I^\omega$. The key observation is that System \eqref{eq:sparsePullback} is generic with respect to $\cC_I^\omega$: the nonzero constant $1-\omega^\gamma$ may be absorbed into the coefficient $c_{i,\gamma}$ by a reparametrization.  By Lemma \ref{lem:GeneralMinkowski}, the dimension of $W_I$ is $n-\delta_I(\omega)$ when the defect of every subset of $\cC^{\omega}_I$ is nonnegative and empty otherwise.
\end{proof}

\begin{theorem}
\label{thm:2Transitive}
Suppose $\cA$ is a square set of supports with $\MV(\cA)>0$. 
If $\cB \subseteq \cA$ is abundant, then the monodromy group $G(\pi_{\cB,\cF_{\cC}})$ is $2$-transitive if and only if $\cA$ is not lacunary.
\end{theorem}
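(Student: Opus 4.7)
The plan is to apply the standard criterion that $G(\pi_{\cB,\cF_{\cC}})$ is $2$-transitive if and only if the fibre-square $Y$ has exactly two irreducible components of top dimension. Since $\dim \C^{\cB} = N$ and $\pi$ is generically finite, top dimension is $N$. I will locate the top-dimensional components by combining the fibre-dimension formula of Lemma~\ref{lem:FibreDimension} with the stratum-dimension formula of Lemma~\ref{lem:WIdimension}, then count them in terms of the lattice index $[\Z^n:L[\cA]]$.

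First I would decompose $Y$ via the stratification
\[
S \;=\; U_{\emptyset} \;\sqcup\; \bigsqcup_{\emptyset\neq I\subseteq[n]} \;\bigsqcup_{\omega} U_I(\omega),
\]
which induces $Y = p^{-1}(W_{\emptyset}) \sqcup \bigsqcup_{I,\omega} p^{-1}(W_I(\omega))$.  By Corollary~\ref{cor:WIasZeroSet} the stratum $W_{\emptyset}=U_{\emptyset}$ is dense-open in $S$, hence irreducible of dimension $2n$; Lemma~\ref{lem:FibreDimension} gives fibre dimension $N-2n$, so $\overline{p^{-1}(W_{\emptyset})}$ is an irreducible component of $Y$ of dimension $N$.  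For $\emptyset\neq I$, Lemmas~\ref{lem:FibreDimension} and \ref{lem:WIdimension} yield
\[
\dim p^{-1}(W_I(\omega)) \;=\; (n-\delta_I(\omega)) + (N-2n+|I|) \;=\; N - (n-|I|) - \delta_I(\omega).
\]
This is at most $N$, with equality iff $|I|=n$ and $\delta_{[n]}(\omega)=0$.  Thus the only additional top-dimensional components come from characters $\omega$ of $\Z^n/L[\cB]$ satisfying $\omega^{\gamma}=1$ for every $\gamma\in C_i$ and every $i$.

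The key step is to rewrite this condition in terms of $L[\cA]$.  After the harmless shift so that $\mathbf{0}\in B_i$, any $\omega$ trivial on $L[\cB]$ satisfies $\omega^{\beta}=1$ for all $\beta\in B_i$, so the extra condition $\omega^{\gamma}=1$ for $\gamma\in C_i$ combines to give $A_i\subseteq \mathcal L_\omega$ for each $i$, equivalently $L[\cA]\subseteq \mathcal L_\omega$.  The characters with this property form the dual of $\Z^n/L[\cA]$, a group of order $[\Z^n:L[\cA]]$.  For each such $\omega$, one checks that the defining equations $h_i(x)-h_i(\omega\odot x)$ vanish identically on $U_{[n]}(\omega)$ (by the computation in the proof of Lemma~\ref{lem:WIdimension}, since $\omega^{\gamma}=1$ for $\gamma\in C_i$), so $W_{[n]}(\omega)=U_{[n]}(\omega)$, which is irreducible through the parametrization $\varphi_\omega$; because the fibres of $p$ over it are affine linear spaces, $\overline{p^{-1}(W_{[n]}(\omega))}$ is an irreducible component of dimension $N$.

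Putting this together, the number of top-dimensional components of $Y$ is exactly $1+[\Z^n:L[\cA]]$.  Hence $Y$ has two top components (equivalently, the monodromy group is $2$-transitive) if and only if $[\Z^n:L[\cA]]=1$, i.e.\ $\cA$ is not lacunary.  The main obstacle I anticipate is bookkeeping: justifying that the preimages are equidimensional with irreducible fibres so that irreducible components of $W_I(\omega)$ lift bijectively to components of $p^{-1}(W_I(\omega))$, and verifying that non-empty strata with $|I|<n$ never reach dimension $N$ even when $\delta_I(\omega)$ is small, which is where the abundance of $\cB$ (ensuring $\rank L[\cB_I]=n$ for every nonempty $I$, hence the finite decomposition into the $V_I(\omega)$) is used decisively.
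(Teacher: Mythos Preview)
Your argument is essentially the paper's proof for the forward direction: the same stratification of $S$ by the $U_I(\omega)$, the same combination of Lemmas~\ref{lem:FibreDimension} and~\ref{lem:WIdimension} to obtain $\dim p^{-1}(W_I(\omega))=N-(n-|I|)-\delta_I(\omega)$, and the same identification of the two top-dimensional pieces as the diagonal and the closure of $p^{-1}(W_\emptyset)$.

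The one genuine difference is in the backward direction. The paper simply observes that $G(\pi_{\cB,\cF_{\cC}})\subseteq G(\pi_{\cA})$ and invokes Proposition~\ref{prop:Esterov} (via the imprimitivity coming from the lacunary decomposition) to conclude that lacunary $\cA$ forces non--$2$-transitivity. You instead argue directly that each character $\omega$ trivial on $L[\cA]$ produces a separate top-dimensional piece $\overline{p^{-1}(W_{[n]}(\omega))}$, giving the exact count $1+[\Z^n{:}L[\cA]]$. This is more self-contained and more informative than the paper's citation, but it requires the extra verification you flag in your last paragraph: that for $\omega\neq(1,\dots,1)$ the set $\overline{p^{-1}(W_{[n]}(\omega))}$ is a \emph{component} rather than lying in $\overline{p^{-1}(W_\emptyset)}$. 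A clean way to close that gap is to note that when $L[\cA]\subseteq\mathcal L_\omega$ the map $(x,y,b)\mapsto(x,\omega\odot y,b)$ is an automorphism of $Y$ carrying the diagonal (certainly a component) onto $p^{-1}(W_{[n]}(\omega))$; alternatively, compare the images under $p$, which have dimensions $n$ versus $2n$. With that point supplied, your version yields both directions in one stroke, whereas the paper's shortcut buys brevity at the cost of depending on Esterov's classification.
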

\begin{proof}
The backwards direction follows from Proposition \ref{prop:Esterov}: If $\cA$ is lacunary, then $G(\pi_{\cA})$ is not $2$-transitive, so the restriction $G(\pi_{\cB,\cF_{\cC}})$ is not either. 

For the forward direction, we compute the dimension of the preimage $p^{-1}(W)$ of a component $W$ of $W_I(\omega)$ for $I \neq \emptyset$. The dimension of $W$ is $n-\delta_I(\omega)$ by Lemma \ref{lem:WIdimension}. The dimension of the fibre over a point in $W$ is  $N-2n+|I|$ by Lemma \ref{lem:FibreDimension}. Hence, the dimension of $p^{-1}(W)$ is $N-n+|I|-\delta_I(\omega)$ which only obtains the value of $N$ when $|I|=n$ and $\delta_I(\omega)=0$.

Suppose there exists $W_I(\omega)$ such that $I=[n]$, $\delta_I(\omega) = 0$, and $\omega \neq (1,\ldots,1)$. Since $\delta_I(\omega)=0$, each $C_i$ is contained in the lattice $\mathcal L_\omega$, a proper sublattice of $\Z^n$ since $\omega \neq (1,\ldots,1)$. Note that $\mathcal L_\omega$ also contains $\cB$ since $L[\cB] = k_1 \Z \oplus \cdots \oplus k_n \Z$ and the $\omega_i$ are $k_i$-th roots of unity. Thus, $\cC$ and $\cB$ are both contained in $\mathcal L_\omega$ and so $\cA = \cB \cup \cC$ must also be contained in $\mathcal L_\omega$. 
This is a contradiction since $\cA$ is not lacunary. Hence, the only $W_{[n]}(\omega)$ containing an irreducible component $W$ for which $\dim(p^{-1}(W))=N$ is the variety $W_{[n]}(1,\ldots,1) = \{(x,y) 
\in S \mid x=y\}$. Its preimage is the diagonal of $Y$ and is irreducible of dimension $N$.

The only other variety in the stratification of $p(Y)$ is $W_\emptyset$. This is an irreducible variety of dimension $2n$, and the dimension of a fibre of a point in $W_\emptyset$ is $N-2n$. Hence, $p^{-1}(W_\emptyset)$ is irreducible of dimension $N$. Thus, the only two components of $Y$ of dimension $N$ are $p^{-1}(W_\emptyset)$ and $p^{-1}(W_{[n]}(1,\ldots,1))$.
\end{proof}

Putting Theorem \ref{thm:SimpleTransposition} and Theorem \ref{thm:2Transitive} together, we arrive at our main result.
\begin{theorem}
\label{thm:monodromy}
Let $\cA$ be a nonlacunary square set of supports with $\MV(\cA)>0$. Suppose $\cB \subseteq \cA$ is abundant. Then the monodromy group $G(\pi_{\cB,\cF_{\cC}})$ is the full symmetric group.
\end{theorem}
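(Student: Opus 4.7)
The plan is to combine Theorem \ref{thm:SimpleTransposition} and Theorem \ref{thm:2Transitive} via the classical fact that a $2$-transitive subgroup of a symmetric group containing a transposition equals the full symmetric group. The statement has already set up both ingredients, so the main task is to verify that the hypotheses of Theorem \ref{thm:monodromy} (nonlacunary $\cA$, abundant $\cB$) actually imply the hypotheses of Theorem \ref{thm:SimpleTransposition} (nonlacunary, nontriangular, $N>0$), and then to cite the symmetric-group generation fact.

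First, I would verify that $\cB$ abundant forces $\cA$ to be nontriangular and $N>0$. Abundance of $\cB$ means each $B_i$ has full rank $n$ in $\Z^n$, so for any nonempty $I \subseteq [n]$, $\rank(\cA_I) \geq \rank(\cB_I) \geq \rank(B_i) = n$ for any $i\in I$. If $I \subsetneq [n]$, this gives $\rank(\cA_I) = n > |I|$, so the triangularity condition $\rank(\cA_I) = |I|$ cannot hold for any proper $I$, meaning $\cA$ is nontriangular. Moreover, since each $B_i$ has full rank in $\Z^n$, we have $|B_i| \geq n+1 \geq 2$, so $N = \sum_i |B_i| > 0$.

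With these facts in hand, Theorem \ref{thm:SimpleTransposition} applies and yields a simple transposition $\tau \in G(\pi_{\cB,\cF_{\cC}})$. Independently, since $\cA$ is nonlacunary and $\cB$ is abundant, Theorem \ref{thm:2Transitive} gives that $G(\pi_{\cB,\cF_{\cC}})$ acts $2$-transitively on the fibre of $\pi_{\cB,\cF_{\cC}}$.

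Finally, I would invoke the elementary fact: a $2$-transitive subgroup $G \leq S_m$ containing a transposition equals $S_m$. The argument is that if $\tau = (a\ b) \in G$, then for any pair $\{i,j\}$, $2$-transitivity provides some $\sigma \in G$ with $\sigma(a)=i$ and $\sigma(b)=j$, whence $\sigma \tau \sigma^{-1} = (i\ j) \in G$. Since all transpositions lie in $G$, $G = S_m$. Applying this to $G(\pi_{\cB,\cF_{\cC}})$ acting on the fibre (of cardinality $\MV(\cA)$) completes the proof. No step here is a serious obstacle since all the difficult work was already done in Theorems \ref{thm:SimpleTransposition} and \ref{thm:2Transitive}; the only minor point needing care is the verification that abundance eliminates both triangularity and the $N=0$ degeneracy.
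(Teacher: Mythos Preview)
Your proposal is correct and follows essentially the same approach as the paper, which simply states that the result follows by ``putting Theorem \ref{thm:SimpleTransposition} and Theorem \ref{thm:2Transitive} together.'' You have helpfully filled in the details the paper leaves implicit: that abundance of $\cB$ rules out triangularity of $\cA$ and forces $N>0$, and that a $2$-transitive group containing a transposition is the full symmetric group.
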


Finally, using the machinery developed in this section, we provide a simple proof that the first coordinates of the solution set $\V^\times(\cF)$ are distinct whenever $\cA$ is neither lacunary nor triangular.

\begin{lemma}
\label{lem:distinctfirstcoordinates}
If $\cA$ is a square set of supports which is neither lacunary nor triangular and $\cF \in \C^{\cA}$ is generic, then the first coordinates of $\V^\times(\cF)$ are distinct. 
\end{lemma}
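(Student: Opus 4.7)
The plan is to apply the fibered product analysis from this section directly, specialized to the case $\cB=\cA$ (so $\cF_\cC$ is trivial). If $\MV(\cA)=1$ the statement is vacuous, so I would assume $\MV(\cA)\geq 2$ at the outset.

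First, since $\cA$ is neither lacunary nor triangular (so in particular not strictly triangular), I would invoke Proposition \ref{prop:Esterov} to conclude that $G(\pi_\cA)$ is the full symmetric group, and in particular is $2$-transitive on $\V^\times(\cF)$ for a generic $\cF$. Next I would form the incidence variety
\[
Y = \{(x,y,\cF) \in ((\C^\times)^n)^2 \times \C^\cA \mid \cF(x)=\cF(y)=0\},
\]
which is the specialization of the fibered product used in the proof of Theorem \ref{thm:2Transitive} to $\cB=\cA$. By the Proposition stated just before Theorem \ref{thm:2Transitive}, $2$-transitivity of $G(\pi_\cA)$ implies that $Y$ has exactly two irreducible components of the top dimension $N$: the diagonal $\Delta$ and an off-diagonal component $Y^\circ$.

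The heart of the argument is to show that $Y^\circ$ is not contained in the closed subvariety $\{x_1=y_1\}$. I would exhibit a single witness: choose any $(x_0,y_0)\in((\C^\times)^n)^2$ with $x_{0,1}\neq y_{0,1}$, and observe that $\cF(x_0)=\cF(y_0)=0$ imposes at most $2n$ affine linear conditions on $\cF\in\C^\cA$. Because $\MV(\cA)>0$ forces $|A_i|\geq 2$ for each $i$, one has $N=\sum_i|A_i|\geq 2n$, so this linear system admits a solution $\cF_0$. Then $(x_0,y_0,\cF_0)\in Y$ satisfies $x_0\neq y_0$ and so lies in $Y^\circ$, while $x_{0,1}\neq y_{0,1}$ as required.

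Consequently $Y^\circ\cap\{x_1=y_1\}$ is a proper closed subvariety of the irreducible $N$-dimensional variety $Y^\circ$, so has dimension strictly less than $N$, and its image under the projection $Y\to\C^\cA$ is a proper closed subvariety. Any $\cF$ outside this subvariety and the branch locus of $\pi_\cA$ has pairwise distinct first coordinates on $\V^\times(\cF)$. The step I expect to require the most care is the appeal to the two-component decomposition of $Y$: it is the \emph{irreducibility} of $Y^\circ$, rather than mere connectedness of $Y\setminus\Delta$, that is needed in order for a single witness $(x_0,y_0,\cF_0)$ to propagate, via the dimension count, to the generic statement we want.
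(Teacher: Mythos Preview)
Your approach is essentially the paper's: set up the fibre square $Y$, invoke Esterov's $2$-transitivity to get two top-dimensional components (the diagonal $\Delta$ and an off-diagonal $Y^\circ$), show $Y^\circ\not\subseteq\{x_1=y_1\}$, and conclude that the bad locus has dimension $<N$ and hence misses a generic fibre of $\pi$. The paper verifies the key step by identifying $Y^\circ$ with the closure of $p^{-1}(W_\emptyset)$, which dominates the open set $W_\emptyset\subset S$ and therefore visibly contains pairs with $x_1\neq y_1$; you instead exhibit a single witness $(x_0,y_0,\cF_0)$.

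One step needs tightening. From $x_0\neq y_0$ you infer $(x_0,y_0,\cF_0)\in Y^\circ$, but $2$-transitivity only tells you there are two components \emph{of top dimension}, not two components outright, so a priori your witness could land on some lower-dimensional off-diagonal component of $Y$ and say nothing about $Y^\circ$. The fix is immediate: $Y$ is cut out by $2n$ equations in the smooth $(2n+N)$-dimensional variety $((\C^\times)^n)^2\times\C^\cA$, so by Krull's height theorem every irreducible component of $Y$ has dimension at least $N$. Combined with the two-component count this forces $Y=\Delta\cup Y^\circ$ exactly, and then $Y\setminus\Delta\subseteq Y^\circ$ as you need. Alternatively, choose $(x_0,y_0)$ generically so that the $2n$ linear conditions on $\cF$ are independent; then $p^{-1}(x_0,y_0)$ is an $(N-2n)$-dimensional affine space, and the preimage of the open locus of such $(x_0,y_0)$ is an irreducible $N$-dimensional piece of $Y\setminus\Delta$, hence equals $Y^\circ$---this is precisely the paper's $p^{-1}(W_\emptyset)$ in disguise.
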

\begin{proof}
We let $\cB = \emptyset$ so that $Y$ is  the fibre-square of $\pi_{\cA}$. Since $G(\pi_{\cA})$ is $2$-transitive by Proposition~\ref{prop:Esterov}, $Y$ has two components of top dimension $N$. Independent of the argument in this section requiring $\cC$ to be abundant, these components are still $p^{-1}(W_{\emptyset})$ and the diagonal $p^{-1}(W_{[n]}(1,\ldots,1))$. Any other component of $Y$ cannot surject onto $\C^{\cA}$ via $\pi$ for dimension reasons. Hence, if ${s=(x,y,\cF)}$ is a point in a generic fibre over $\pi$ with $x_1=y_1$, we must have that $s \in p^{-1}(W_{[n]}(1,\ldots,1))$.
\end{proof}

\begin{example}
Consider the supports $\cA$ and $\cB$ in Example \ref{ex:trickysupports}. The inclusion lattice corresponding to the sets $\{V_I\}_{I \subseteq [2]}$ and $\{U_I\}_{I \subseteq [2]}$ are displayed in Figure \ref{fig:inclusionposets}. Hence, the set $S  = ((\C^\times)^2)^2$ is stratified by $U_{12}(1,1),U_{12}(1,-1),U_{2}(-1,-1),U_{2}(-1,1),$ and $U_{\emptyset}$. By Lemma \ref{lem:WIdimension}, $W_{12}(1,-1)$ and $W_{2}(-1,1)$ are both empty since both $\mathcal B^{(1,-1)}_{\{2\}}$ and $\mathcal B^{(-1,1)}_{\{1\}}$ have negative defects. Hence, the only nonempty components of $p(Y)$ are $W_{12}(1,1)$,$W_{2}(-1,-1)$, and $W_\emptyset$. Since $\delta_{12}(1,1)=0$ and $\delta_{2}(-1,-1) = 0$, we have that $W_I(\omega)=U_I(\omega)$.  The dimension counts for $p^{-1}(W_{12}(1,1)),p^{-1}(W_{2}(-1,-1)),$ and $p^{-1}(W_\emptyset)$ appear in Table \ref{table:DimensionCounts}.
We remark that $p^{-1}(W_2(-1,-1))$ is actually a subvariety of $p^{-1}(W_{\emptyset})$, and not its own component.

\begin{figure}[htpb!]
$$
\begin{tikzcd}[column sep=0.1cm,row sep=.3cm]
& V_{12}\arrow[dl]\arrow[dr] & \\
V_1 \arrow[dr]& & V_{2} \arrow[dl]\\
& V_\emptyset & 
\end{tikzcd} \hspace{0.2in}
\begin{tikzcd}[column sep=0.1cm,row sep=.3cm]
& \Z \oplus 2\Z & \\
\Z \oplus 2\Z \arrow[ur]& & 2\Z \oplus 2\Z \arrow[ul]\\
& \textbf{0} \arrow[ur]\arrow[ul] & 
\end{tikzcd} \hspace{0.2 in}
$$
\caption{The inclusion poset of $\{V_I\}_{I \subseteq \{1,2\}}$ and the inclusion poset of the corresponding lattices $\{L[\cB_I]\}_{I \subseteq \{1,2\}}$.}
\label{fig:inclusionposets}
\end{figure}
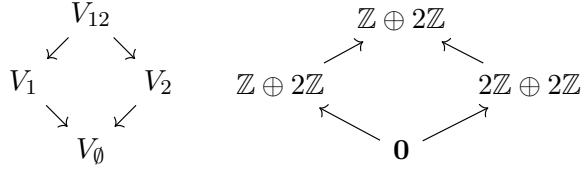

\begin{table}[htpb!]
\begin{tabular}{l|c|c|c||c}
Stratum & $\dim(U_I(\omega))$ & $\dim(W_I(\omega))$ & Fibre dimension & $\dim(p^{-1}(W_I(\omega))$ \\ \hline
$U_{12}(1,1)$ & $2$ & $2$ & $4$ & $6$ \\
$U_{12}(1,-1)$ & $2$ & $-1$ & $-1$ & $-1$ \\
$U_{2}(-1,1)$ & $2$ & $-1$ & $-1$ & $-1$ \\
$U_{2}(-1,-1)$ & $2$ & $2$ & $3$ & $5$ \\
$U_{\emptyset}$ & $4$ & $4$ & $2$ & $6$ 
\end{tabular}
\caption{Summary of dimension counts for various strata of $S$.}
\label{table:DimensionCounts}
\end{table}

\end{example}

\section{Examples and extensions}
\label{sec:Examples}
We collect a gallery of examples that illustrate and extend our theorems.

\subsection{Computing the trace by reducing to the necessary support}
Fix $\cA$ to be a square collection of supports with positive mixed volume, and $\cF \in \C^{\cA}$ to be Bernstein-generic. Then $\Sigma_1(\V^\times(\cF))$ is equal to $\Sigma_1(\V^\times(\cF_{\cN}))$ since, by definition, this trace depends only on the coefficients of $\cF$ in the necessary support $\cN \subseteq \cA$. 
The advantage of reducing to $\cF_{\cN}$ is that $\MV(\cN)$ is often significantly smaller than $\MV(\cA)$. Numerically, one may perform a direct computation of $\Sigma_1(\V^\times(\cF))$ by solving a system of much smaller degree. Algebraically, resultant computations on $\cF_{\cN}$ are likely to be much easier than resultant computations on $\cF$. 

\begin{example}\label{ex:Bezout}
Consider a polynomial system $\cF=(f_1,\ldots,f_n)$ in $n$ variables, where $\deg(f_i)=d_i$ for $i=1,\ldots,n$. Then $\cF$ is a Bernstein-generic sparse polynomial system supported on $$\mydef{\Delta} = (d_1\Delta_n,d_2\Delta_n,\ldots,d_n\Delta_n),$$
where $\mydef{\Delta_n}$ consists of the $n+1$ vertices of the standard $n$-dimensional simplex and $d\Delta_n$ consists of all lattice points in the $d$-th dilate of $\Delta_n$. 

A naive numerical computation of $\Sigma_1(\V^\times(\cF))$ involves computing $\prod_{i=1}^n d = \MV(\Delta)$-many solutions, by B\'ezout's theorem. However, by Lemma \ref{lem:ApproximationOfNecessary}, this trace only depends on the terms of each $f_i$ of degree $d_i$ and $d_i-1$. The mixed volume of $\cN$ is equal to $\prod_{i=1}^{n} d_i - \prod_{i=1}^{n} (d_i-1)$, considerably smaller than that of $\cA$. A root of $\V(\cF) \subseteq \C^n$ at the origin of multiplicity $\prod_{i=1}^n (d_i-1)$ accounts for the difference. 

For example, two generic bivariate polynomials $\{f_1,f_2\}$ of degree $5$ have $25$ common roots in the torus, whereas the system $\{g_1,g_2\}$ obtained by ignoring all terms of degree $3$ or less has only $9=25-16$ common solutions in the torus (see Figure \ref{fig:dense_deg5}). The remaining $16$ solutions are supported at the origin of $(\C^\times)^2$.

\end{example}

\begin{figure}[htb]
\begin{tikzpicture}[scale=.7]
\filldraw[fill=gray, fill opacity=0.3] (0,0) -- (5,0) -- (0,5) -- cycle;
\foreach \i in {0,...,6} {\foreach \j in {0,...,\i} {\filldraw[color=gray] ({6-\j},{6-\i+\j}) circle (.03);}}
\foreach \i in {0,...,3} {\foreach \j in {0,...,\i} {\filldraw[fill=white] (\j,{\i-\j}) circle (.15);}}
\foreach \j in {0,...,4} {\filldraw[fill=white] (\j,{4-\j}) circle (.15);\filldraw[pattern=crosshatch] (\j,{4-\j}) circle (.15);}
\foreach \j in {0,...,5} {\filldraw[fill=black] (\j,{5-\j}) circle (.15);}
\node at (3,-1) {(a)};
\begin{scope}[shift={(9,0)}]
\filldraw[fill=gray, fill opacity=0.3] (0,4) -- (4,0) -- (5,0) -- (0,5) -- cycle;
\foreach \i in {0,...,6} {\foreach \j in {0,...,\i} {\filldraw[color=gray] ({6-\j},{6-\i+\j}) circle (.03);}}
\foreach \i in {0,...,3} {\foreach \j in {0,...,\i} {\filldraw[color=gray] (\j,{\i-\j}) circle (.03);}}
\foreach \j in {0,...,4} {\filldraw[fill=white] (\j,{4-\j}) circle (.15);\filldraw[pattern=crosshatch] (\j,{4-\j}) circle (.15);}
\foreach \j in {0,...,5} {\filldraw[fill=black] (\j,{5-\j}) circle (.15);}
\node at (3,-1) {(b)};
\end{scope}
\end{tikzpicture}
\caption{(a) Support $5\Delta_2$.  (b)  Support $5\Delta_2 \backslash 3\Delta_2$. }\label{fig:dense_deg5}
\end{figure}

\begin{example}
Continuing Example \ref{ex:Bezout}, we compare timings of computing the traces of $\{f_1,f_2\}$ and $\{g_1,g_2\}$ by sampling random integers uniformly between $-10$ and $10$ for coefficients. Since the solutions to $\{g_1,g_2\}$ which are not in the torus are at the origin, they do not influence the trace, even though they affect the resultant.  We use the implementation in the Resultants package \cite{SparseResultants} in the Macaulay2 computer algebra system \cite{M2}.  The experiments were carried out on Clemson's Palmetto server on an Intel Xeon E5-2680 v3 CPU at 2.50GHz with 126 GB of ram and running CentOS linux.
\begin{table}[hbt]
\begin{tabular}{c|c|c|c|c|c}
Degree&5&10&15&20&25\\\hline
$\{f_1,f_2\}$&0.0301&0.931&13.7&98.5&494\\
$\{g_1,g_2\}$&0.0203&0.201&1.33&5.15&17.4
\end{tabular}
\medskip
\caption{Timings (in seconds) for computing the trace of two polynomial systems using the hidden variable resultant. One is a generic system $\{f_1,f_2\}$ of bivariate quintics. The other, $\{g_1,g_2\}$, is obtained from $\{f_1,f_2\}$ by ignoring terms of degree less than four.}
\end{table}
\end{example}

Rather than restricting the total degrees of each polynomial in a sparse polynomial system (as in Example \ref{ex:Bezout}), one can restrict the multidegrees of each polynomial. We illustrate our approach on such systems in the bivariate setting. 

\begin{example}\label{ex:rectangles}
Consider a Bernstein-generic bivariate system $\{f_1,f_2\}$ supported on $$\mydef{\rectangle} = (\rectangle_{k_1,\ell_1},\rectangle_{k_2,\ell_2})$$
where $\mydef{\rectangle_{k,\ell}} = ([0,k] \times [0,\ell]) \cap \Z^2$ (see Figure \ref{fig:rectangles}). The mixed volume of $\rectangle$ is $k_1\ell_2+k_2\ell_1$. The restricted polynomial system $\{g_1,g_2\}$, obtained by setting the coefficients of unnecessary monomials equal to zero, has support $(\rectangle_{1,\ell_1},\rectangle_{1,\ell_2})$ after translation. This support has mixed volume $\ell_1+\ell_2$. The translation of the supports corresponds to dividing each $g_i$ by $x_1^{k-1}$ to eliminate a component of $\V(g_1,g_2) \subseteq \C^2$ of multiplicity $(k-1)\ell_2+(k_2-1)\ell_1$ supported on $x_1=0$. Note that, generically, $\Sigma_2(\V^\times(f_1,f_2)) \neq \Sigma_2(\V^\times(g_1,g_2))$.

\begin{figure}[htb]
\begin{tikzpicture}[scale=.7]
\filldraw[fill=gray, fill opacity=0.3] (0,0) -- (5,0) -- (5,2) -- (0,2) -- cycle;
\foreach \i in {0,...,6} \filldraw[color=gray] (\i,3) circle (.03);
\foreach \i in {0,...,2} \filldraw[color=gray] (6,\i) circle (.03);
\foreach \i in {0,...,3} {\foreach \j in {0,...,2} {\filldraw[fill=white] (\i,\j) circle (.15);}}
\foreach \i in {0,...,2} {\filldraw[fill=white] (4,\i) circle (.15);\filldraw[pattern=crosshatch] (4,\i) circle (.15);}
\foreach \i in {0,...,2} {\filldraw[fill=black] (5,\i) circle (.15);}
\node at (3,-1) {(a)};
\begin{scope}[shift={(8,0)}]
\filldraw[fill=gray, fill opacity=0.3] (0,0) -- (2,0) -- (2,3) -- (0,3) -- cycle;
\foreach \i in {0,...,3} \filldraw[color=gray] (\i,4) circle (.03);
\foreach \i in {0,...,4} \filldraw[color=gray] (3,\i) circle (.03);
\foreach \j in {0,...,3} {\filldraw[fill=white] (0,\j) circle (.15);}
\foreach \i in {0,...,3} {\filldraw[fill=white] (1,\i) circle (.15);\filldraw[pattern=crosshatch] (1,\i) circle (.15);}
\foreach \i in {0,...,3} {\filldraw[fill=black] (2,\i) circle (.15);}
\node at (1.5,-1) {(b)};
\end{scope}
\begin{scope}[shift={(15,0)}]
\filldraw[fill=gray, fill opacity=0.3] (0,0) -- (1,0) -- (1,2) -- (0,2) -- cycle;
\foreach \i in {0,...,2} \filldraw[color=gray] (\i,3) circle (.03);
\foreach \i in {0,...,2} \filldraw[color=gray] (2,\i) circle (.03);
\foreach \i in {0,...,2} {\filldraw[fill=white] (0,\i) circle (.15);\filldraw[pattern=crosshatch] (0,\i) circle (.15);}
\foreach \i in {0,...,2} {\filldraw[fill=black] (1,\i) circle (.15);}
\node at (1,-1) {(c)};
\end{scope}
\begin{scope}[shift={(20,0)}]
\filldraw[fill=gray, fill opacity=0.3] (0,0) -- (1,0) -- (1,3) -- (0,3) -- cycle;
\foreach \i in {0,...,2} \filldraw[color=gray] (\i,4) circle (.03);
\foreach \i in {0,...,4} \filldraw[color=gray] (2,\i) circle (.03);
\foreach \i in {0,...,3} {\filldraw[fill=white] (0,\i) circle (.15);\filldraw[pattern=crosshatch] (0,\i) circle (.15);}
\foreach \i in {0,...,3} {\filldraw[fill=black] (1,\i) circle (.15);}
\node at (1,-1) {(d)};
\end{scope}
\end{tikzpicture}
\caption{Supports of $f_1,f_2,g_1,$ and $g_2$ as in Example \ref{ex:rectangles}.}
\label{fig:rectangles}
\end{figure}
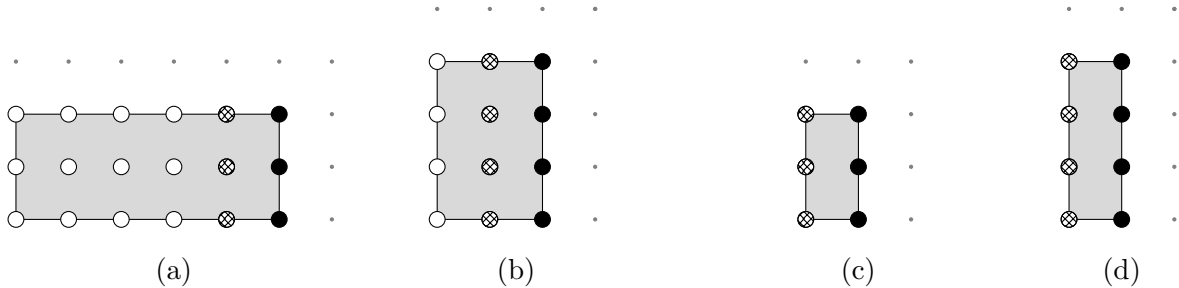

\end{example}

Even though the sparse trace tests cannot be directly applied to lacunary or triangular supports, one may use this special structure to compute traces more quickly (see Theorems \ref{thm:lacunaryTraces} and \ref{thm:triangularTraces}).

\begin{example}
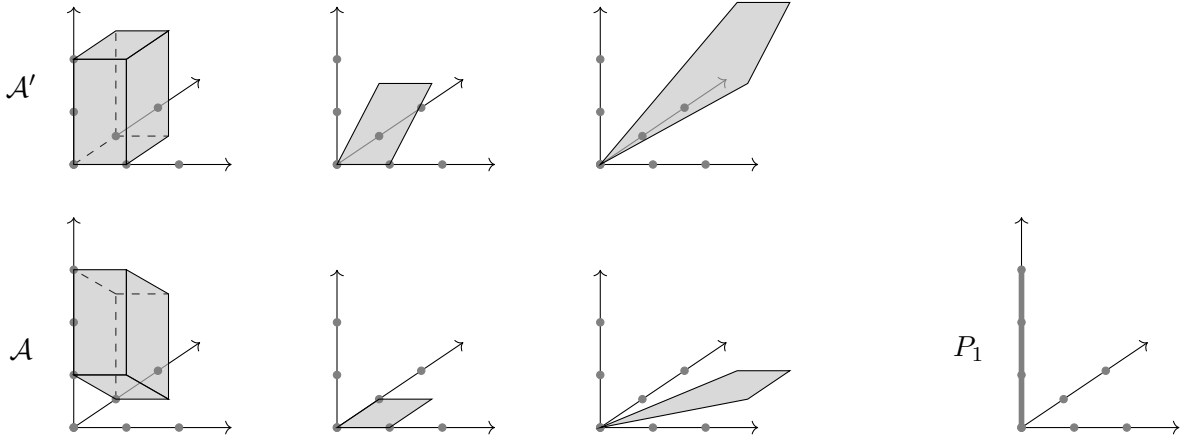
\begin{figure}
\begin{tikzpicture}[scale=.7]

\node at (-1,1.5) {$\cA'$};

\draw[dashed] (0,0) -- (0.8,.54);
\draw[dashed]  (0.8,.54) -- (1.8,0.54) ;
\draw[dashed]  (0.8,.54) -- (0.8,2.54) ;
\draw[->] (0,0) -- (0,3);
\draw[->] (0,0) -- (3,0);
\draw[gray] (0.8,.54) -- (2.25*.8,2.25*.54);
\draw[->] (2.25*0.8,2.25*.54) -- (3*0.8,3*.54);

\foreach \i in {0,...,2} \filldraw[color=gray] (\i,0) circle (.07);
\foreach \i in {0,...,2} \filldraw[color=gray] (0,\i) circle (.07);
\foreach \i in {0,...,2} \filldraw[color=gray] (\i*.8,\i*.54) circle (.07);

\filldraw[fill=gray, fill opacity=0.3] (0,0) -- (1,0) -- (1,2) -- (0,2) -- cycle;
\filldraw[fill=gray, fill opacity=0.3] (1,0)  -- (1.8,0.54) -- (1.8,2.54) -- (1,2) -- cycle;
\filldraw[fill=gray, fill opacity=0.3]  (1.8,2.54) -- (1,2) -- (0,2) -- (0.8,2.54) -- cycle;

\begin{scope}[shift={(5,0)}]

\draw[->] (0,0) -- (0,3);
\draw[->] (0,0) -- (3,0);
\draw[gray] (0,0) -- (1.9*.8,1.9*.54);
\draw[->] (1.9*0.8,1.9*.54) -- (3*0.8,3*.54);

\foreach \i in {0,...,2} \filldraw[color=gray] (\i,0) circle (.07);
\foreach \i in {0,...,2} \filldraw[color=gray] (0,\i) circle (.07);
\foreach \i in {0,...,2} \filldraw[color=gray] (\i*.8,\i*.54) circle (.07);

\filldraw[fill=gray, fill opacity=0.3] (0,0) -- (1,0) -- (1.8,1.54) -- (0.8,1.54) -- cycle;
\end{scope}

\begin{scope}[shift={(10,0)}]

\draw[->] (0,0) -- (0,3);
\draw[->] (0,0) -- (3,0);
\draw[gray] (0,0) -- (1.9*.8,1.9*.54);
\draw[gray, ->] (1.9*0.8,1.9*.54) -- (3*0.8,3*.54);

\foreach \i in {0,...,2} \filldraw[color=gray] (\i,0) circle (.07);
\foreach \i in {0,...,2} \filldraw[color=gray] (0,\i) circle (.07);
\foreach \i in {0,...,2} \filldraw[color=gray] (\i*.8,\i*.54) circle (.07);

\filldraw[fill=gray, fill opacity=0.3] (0,0) -- (2.8,1.54) -- (3.6,3.08) -- (2.6,3.08) -- cycle;
\end{scope}

\begin{scope}[shift={(0,-5)}]

\node at (-1,1.5) {$\cA$};

\draw[dashed] (0.8,2.54-1+1) -- (1.8,2.54-1+1);
\draw[dashed]  (0.8,2.54-1+1) -- (0,2+1) ;
\draw[dashed]  (0.8,.54-1+1) -- (0.8,2.54-1+1) ;
\draw[->] (0,0) -- (0,4);
\draw[->] (0,0) -- (3,0);
\draw[] (0,0) -- (.8,.54);
\draw[gray] (.8,.54) -- (2.25*.8,2.25*.54);
\draw[->] (2.25*0.8,2.25*.54) -- (3*0.8,3*.54);

\foreach \i in {0,...,2} \filldraw[color=gray] (\i,0) circle (.07);
\foreach \i in {0,...,3} \filldraw[color=gray] (0,\i) circle (.07);
\foreach \i in {0,...,2} \filldraw[color=gray] (\i*.8,\i*.54) circle (.07);

\filldraw[fill=gray, fill opacity=0.3] (0,1) -- (1,0+1) -- (1,2+1) -- (0,2+1) -- cycle;
\filldraw[fill=gray, fill opacity=0.3] (1,0+1)  -- (1.8,0.54-1+1) -- (1.8,2.54-1+1) -- (1,2+1) -- cycle;
\filldraw[fill=gray, fill opacity=0.3]  (0,0+1) -- (1,0+1) -- (1.8,0.54-1+1) -- (0.8,.54-1+1) -- cycle;
\end{scope}

\begin{scope}[shift={(5,-5)}]

\draw[->] (0,0) -- (0,3);
\draw[->] (0,0) -- (3,0);
\draw[] (0,0) -- (1.9*.8,1.9*.54);
\draw[->] (1.9*0.8,1.9*.54) -- (3*0.8,3*.54);

\foreach \i in {0,...,2} \filldraw[color=gray] (\i,0) circle (.07);
\foreach \i in {0,...,2} \filldraw[color=gray] (0,\i) circle (.07);
\foreach \i in {0,...,2} \filldraw[color=gray] (\i*.8,\i*.54) circle (.07);

\filldraw[fill=gray, fill opacity=0.3] (0,0) -- (1,0) -- (1.8,0.54) -- (0.8,0.54) -- cycle;
\end{scope}

\begin{scope}[shift={(10,-5)}]

\draw[->] (0,0) -- (0,3);
\draw[->] (0,0) -- (3,0);
\draw[] (0,0) -- (1.9*.8,1.9*.54);
\draw[->] (1.9*0.8,1.9*.54) -- (3*0.8,3*.54);

\foreach \i in {0,...,2} \filldraw[color=gray] (\i,0) circle (.07);
\foreach \i in {0,...,2} \filldraw[color=gray] (0,\i) circle (.07);
\foreach \i in {0,...,2} \filldraw[color=gray] (\i*.8,\i*.54) circle (.07);

\filldraw[fill=gray, fill opacity=0.3] (0,0) -- (2.8,.54) -- (3.6,1.08) -- (2.6,1.08) -- cycle;
\end{scope}

\begin{scope}[shift={(18,-5)}]

\node at (-1,1.5) {$P_1$};

\draw[->] (0,0) -- (0,4);
\draw[->] (0,0) -- (3,0);
\draw[] (0,0) -- (1.9*.8,1.9*.54);
\draw[->] (1.9*0.8,1.9*.54) -- (3*0.8,3*.54);

\foreach \i in {0,...,2} \filldraw[color=gray] (\i,0) circle (.07);
\foreach \i in {0,...,3} \filldraw[color=gray] (0,\i) circle (.07);
\foreach \i in {0,...,2} \filldraw[color=gray] (\i*.8,\i*.54) circle (.07);

\draw[line width=0.75mm, gray] (0,0) -- (0,3);
\end{scope}

\end{tikzpicture}
\caption{The Newton polytopes of $\cF'$ in the variables $(x_1,x_2,x_3)$ along with the Newton polytopes of $\cF$ in the variables $(x_1,x_2,x_2x_3) = (y_1,y_2,y_3)$ (after multiplying $f_1$ by $y_3$ to clear denominators). After solving the subsystem $\{f_2,f_3\}$ in $(y_1,y_2)$, back-substitution into $f_1$ involves solving a univariate cubic in $y_3$ supported on $P_1$.}
\label{fig:TriangularTraceExample}

\end{figure}

Consider a Bernstein-generic sparse polynomial system $\cF' = (f'_1,f'_2,f'_3)$ supported on $\cA'=(A'_1,A'_2,A'_3)$ consisting of all of the lattice points contained in the Newton polytopes displayed in Figure \ref{fig:TriangularTraceExample}. The system $\cF'$ is triangular, witnessed by the subset $\{2,3\}$. Let $\cA$ be the preimage of $\cA'$ under the linear map $\Phi(\alpha_1,\alpha_2,\alpha_3) = (\alpha_1,\alpha_2+\alpha_3,\alpha_3)$. If $\cF=(f_1,f_2,f_3)$ is the corresponding polynomial system supported on $\cA$, then  $\cF(y_1,y_2,y_3) = \cF(x_1,x_2x_3,x_3) = \cF'(x_1,x_2,x_3)$. The system $\cF$ is obviously triangular since $\{f_2,f_3\}$ is a square system in $y_1$ and $y_2$. 

Since $e_1 \in L[A_{\{2,3\}}]$ and $\Phi$ was chosen to fix $e_1$, the proof of Theorem \ref{thm:triangularTraces} implies that $\Sigma_1(\V^\times(\cF))$ is a multiple of $\Sigma_1\left(\V^\times\left(\cF_{\{2,3\}}\right)\right)$. Namely, $\Sigma_1(\V^\times(\cF))$ is three times the trace of the subsystem since {each} solution $(s_1,s_2) \in \V^\times\left(\cF_{\{2,3\}}\right)$ extends to three solutions to $\V^\times(\cF)$ ($f_1(s_1,s_2,y_3)$ is a univariate polynomial of degree three).  Geometrically, this is illustrated in Figure \ref{fig:TriangularTraceExample}: as the projection $P_1$ of $A_1$ onto ${L\left[\cA_{\{2,3\}}\right]^\perp}$ is a line segment of length three. Therefore, the trace of $\V^\times(\cF')$ can be calculated directly from the trace of $\V^\times(f_2,f_3)$ by computing the length of the projection of $A'_1$ onto a complement of $L\left[{\cA'}_{\{2,3\}}\right]$.

\end{example}

\subsection{Applications to nongeneric systems}

Suppose $\cF \in \C^{\cA}$ is not a Bernstein-generic system, but has $d < \MV(\cA)$ isolated solutions in the torus. The trace $\Sigma_1(\V^\times(\cF))$ is still a ratio of coefficients of $\res_{Q,\cA}^{(1)}(\cF)$. Since each $q_{k}(\cF)=[x_1^k]\res_{Q,\cA}^{(1)}(\cF)$ is an affine linear function of the set of coefficients given in Theorem \ref{thm:combinatorialclassification}, this trace may be computed just as in the previous section. However, the set of unnecessary coefficients may be smaller in these Bernstein-deficient cases.
\begin{example}\label{ex:notBernstein}
Suppose that $\cF=(f_1,f_2)$ consists of a pair of dense bivariate polynomials of degree $5$ which have $24$ solutions in $\C^2$, all of which are in the torus. Consequently, $\cF$ is not Bernstein-generic: there is one solution at infinity. Using the notation in Theorem \ref{thm:combinatorialclassification}, the trace $\Sigma_1(\V^\times(\cF))$ is $\frac{-q_{23}(\cF)}{q_{24}(\cF)}$. By Theorem \ref{thm:combinatorialclassification}, this is an affine linear function of the coefficients in $\cA \backslash \offset(\cA,1)$, expressed in Figure \ref{fig:not_generic} by the white and cross-hatched points in $\Z^2$. 
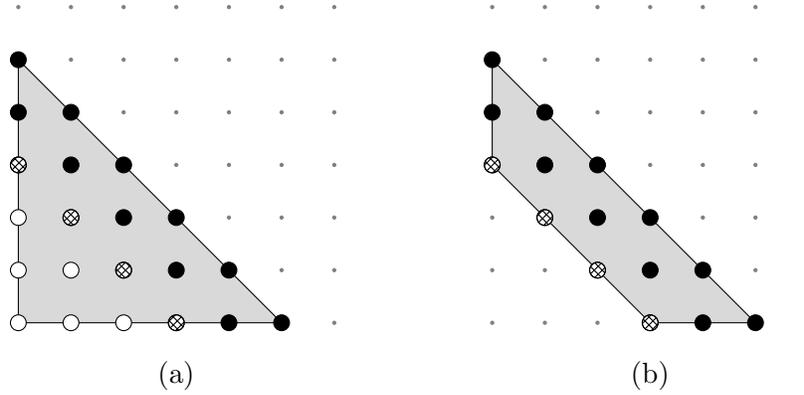
\begin{figure}[htb]
\begin{tikzpicture}[scale=.7]
\filldraw[fill=gray, fill opacity=0.3] (0,0) -- (5,0) -- (0,5) -- cycle;
\foreach \i in {0,...,6} {\foreach \j in {0,...,\i} {\filldraw[color=gray] ({6-\j},{6-\i+\j}) circle (.03);}}
\foreach \i in {0,...,2} {\foreach \j in {0,...,\i} {\filldraw[fill=white] (\j,{\i-\j}) circle (.15);}}
\foreach \j in {0,...,3} {\filldraw[fill=white] (\j,{3-\j}) circle (.15);\filldraw[pattern=crosshatch] (\j,{3-\j}) circle (.15);}
\foreach \j in {0,...,4} {\filldraw[fill=black] (\j,{4-\j}) circle (.15);}
\foreach \j in {0,...,5} {\filldraw[fill=black] (\j,{5-\j}) circle (.15);}
\node at (3,-1) {(a)};
\begin{scope}[shift={(9,0)}]
\filldraw[fill=gray, fill opacity=0.3] (0,3) -- (3,0) -- (5,0) -- (0,5) -- cycle;
\foreach \i in {0,...,6} {\foreach \j in {0,...,\i} {\filldraw[color=gray] ({6-\j},{6-\i+\j}) circle (.03);}}
\foreach \i in {0,...,2} {\foreach \j in {0,...,\i} {\filldraw[color=gray] (\j,{\i-\j}) circle (.03);}}
\foreach \j in {0,...,3} {\filldraw[fill=white] (\j,{3-\j}) circle (.15);\filldraw[pattern=crosshatch] (\j,{3-\j}) circle (.15);}
\foreach \j in {0,...,4} {\filldraw[fill=black] (\j,{4-\j}) circle (.15);}
\foreach \j in {0,...,5} {\filldraw[fill=black] (\j,{5-\j}) circle (.15);}
\node at (3,-1) {(b)};
\end{scope}
\end{tikzpicture}
\caption{(a) Support of $f_1$ and $f_2$  in Example \ref{ex:notBernstein}. These polynomials are two bivariate quintics with one common solution at infinity and $24$ common solutions in $(\C^\times)^2$.  (b) Support of the polynomials $f_1$ and $f_2$ after truncating the terms of degree less than $3$.}\label{fig:not_generic}
\end{figure}

As in Example \ref{ex:Bezout}, setting the unnecessary coefficients equal to zero produces a sparse polynomial system $\cG$ with $\Sigma_1(\V^\times(\cF)) = \Sigma_1(\V^\times(\cG))$, even though $|\V^\times(\cG)|=15<24 = |\V^\times(\cF)|$. Each polynomial in $\cG$ is supported on the second set of monomials in Figure \ref{fig:not_generic}. 
\end{example}

\subsection{Computing other traces}
Rather than computing the trace $\sum_{x \in \V^\times(\cF)} x_1 = \Sigma_1(\V^\times(\cF))$, one may be interested in computing the sum of a different monomial $x^\gamma$ over the solutions $\V^\times(\cF)$. Such a function is also called a \emph{trace} \cite{DAndreaJeronimo:2008}. The trace of $x^\gamma$ may be computed by applying an invertible monomial change of coordinates $(\C_x^\times)^2 \to (\C_y^\times)^2$ which identifies $x^\gamma$ with $y_1$. In the coordinates $y$, our results apply to the resulting polynomial system $\cG(y)$. As the support $\cA$ changes under this monomial change of coordinates, so do the sets $\offset(\cA,\delta)$. Consequently, we can find subsets of coefficients for which the function $\cF \mapsto \sum_{x \in \V^\times(\cF)} x^\gamma$ is affine linear. 

\begin{example}\label{ex:shear}

\begin{figure}[htbp!]
\begin{tikzpicture}[scale=.5]
\filldraw[fill=gray, fill opacity=0.3] (0,0) -- (5,-10) -- (0,5) -- cycle;
\foreach \i in {0,...,1} {\foreach \j in {0,...,\i} {\filldraw[fill=white] (\j,{\i-3*\j}) circle (.15);}}
\foreach \j in {0,...,2} {\filldraw[fill=white] (\j,{2-3*\j}) circle (.15);\filldraw[pattern=crosshatch] (\j,{2-3*\j}) circle (.15);}
\foreach \j in {0,...,3} {\filldraw[fill=white] (\j,{3-3*\j}) circle (.15);\filldraw[pattern=crosshatch] (\j,{3-3*\j}) circle (.15);}
\foreach \j in {0,...,4} {\filldraw[fill=white] (\j,{4-3*\j}) circle (.15);\filldraw[fill=black] (\j,{4-3*\j}) circle (.15);}
\foreach \j in {0,...,5} {\filldraw[fill=black] (\j,{5-3*\j}) circle (.15);}
\node at (3,-11) {(a)};
\begin{scope}[shift={(9,-3)}]
\filldraw[fill=gray, fill opacity=0.3] (0,0) -- (5,0) -- (0,5) -- cycle;
\foreach \i in {0,...,6} {\foreach \j in {0,...,\i} {\filldraw[color=gray] ({6-\j},{6-\i+\j}) circle (.03);}}
\foreach \i in {0,...,2} {\foreach \j in {0,...,\i} {\filldraw[fill=white] (\j,{\i-\j}) circle (.15);}}
\foreach \j in {0,...,3} {\filldraw[fill=white] (\j,{3-\j}) circle (.15);\filldraw[pattern=crosshatch] (\j,{3-\j}) circle (.15);}
\foreach \j in {0,...,4} {\filldraw[fill=white] (\j,{4-\j}) circle (.15);\filldraw[fill=black] (\j,{4-\j}) circle (.15);}
\foreach \j in {0,...,2} {\filldraw[fill=white] (\j,{2-\j}) circle (.15);\filldraw[pattern=crosshatch] (\j,{2-\j}) circle (.15);}
\foreach \j in {0,...,5} {\filldraw[fill=black] (\j,{5-\j}) circle (.15);}
\node at (3,-1) {(b)};
\end{scope}
\end{tikzpicture}
\caption{(a) Support for the pair of dense polynomials of degree $5$ in Example \ref{ex:shear} after a change of coordinates.  (b) The support for this pair of dense polynomials of degree $5$ shifted back to the original coordinates.}\label{fig:changeofvariables}
\end{figure}
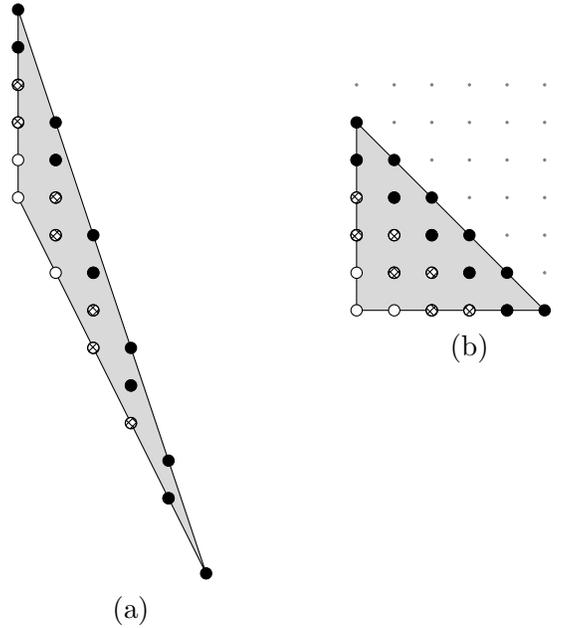

Consider the trace of $x_1x_2^2$ over the solutions $\V^\times(\cF)$ of a Bernstein-generic polynomial system $\cF=(f_1,f_2)$ supported on $\cA=(5\Delta_2,5\Delta_2)$. This trace may be computed by applying the monomial substitution $\cF(x_1,x_2) \mapsto \cF(y_1y_2^{-2},y_2)=\cG(y_1,y_2)$, which induces a map $\V^{\times}(\cF) \to \V^{\times}(\cG)$ on varieties which identifies the trace of $x_1x_2^2$ over $\V^\times(\cF)$ with the trace of $y_1$ over $\V^\times(\cG)$. The support of $\cG$ is displayed in Figure \ref{fig:changeofvariables}, where our results imply that the white and cross-hatched coefficients influence $\Sigma_1(\V^\times(\cG))$ affine linearly. Consequently, the same coefficients influence the trace of $x_1x_2^2$ over $\V^\times(\cG)$ affine linearly. In particular, the white points in Figure \ref{fig:changeofvariables} correspond to coefficients which do not influence $\sum_{x \in \V^\times(\cF)}x_1x_2^2$. 
\end{example}

\section*{Acknowledgements}
We would like to thank Carlos D'Andrea, Gabriella Jeronimo, and Alexander Esterov for helpful discussions. The work began while the authors were in residence at ICERM's semester on Nonlinear Algebra, and continued while the first author was at Texas A\&M University and the Max Planck Institute for Mathematics in the Sciences in Leipzig, Germany.

\bibliographystyle{plain}
\bibliography{SparseTraceTests}
\end{document}